\newmdenv[linecolor=red,frametitle=TODO,linewidth=2]{todobox}
\newtheorem{theorem}{Theorem}
\newtheorem{lemma}[theorem]{Lemma}
\newtheorem{proposition}[theorem]{Proposition}
\newtheorem{corollary}[theorem]{Corollary}
\newtheorem{assumption}[theorem]{Assumption}
\theoremstyle{definition}
\newtheorem{definition}[theorem]{Definition}
\newtheorem{remark}[theorem]{Remark}
\newtheorem{example}[theorem]{Example}
\numberwithin{equation}{section}
\numberwithin{theorem}{section}
\newcommand{\R}{{\mathbb{R}}}
\newcommand{\N}{{\mathbb{N}}}
\newcommand{\Z}{{\mathbb{Z}}}
\newcommand{\loc}{{\mathrm{loc}}}
\DeclareMathOperator{\supp}{supp}
\DeclareMathOperator{\dist}{dist}
\newcommand{\from}{\colon}
\newcommand{\br}{\overline{\gamma}_r}
\DeclareMathOperator*{\esssup}{ess\, sup}
\DeclareMathOperator{\Div}{div}
\DeclareMathOperator{\sign}{sign}
\renewcommand{\phi}{\varphi}
\renewcommand{\epsilon}{\varepsilon}
\newcommand{\eps}{\epsilon}
\renewcommand{\leq}{\leqslant}
\renewcommand{\geq}{\geqslant}
\newcommand{\abs}[1]{\mathopen|#1\mathclose|}
\newcommand{\subscriptcdot}{\mbox{\normalsize$\cdot$}}
\begin{document}

\title{Transport equations for Osgood velocity fields}
\author[U.~S.~Fjordholm]{Ulrik Skre Fjordholm}
\address[U.~S.~Fjordholm]{Department of Mathematics\\
University of Oslo\\
Postboks 1053 Blindern, 0316 Oslo}
\email[]{ulriksf\@@{}math.uio.no}

\author[O.~M{\ae}hlen]{Ola M{\ae}hlen}
\address[O.~M{\ae}hlen]{Mathematical Institute of Orsay\\
Paris-Saclay University\\
91400 Orsay, France}
\email[]{ola.maehlen\@@{}universite-paris-saclay.fr}

\maketitle

\begin{abstract}
We consider the transport equation with a velocity field satisfying the Osgood condition. The weak formulation is not meaningful in the usual Lebesgue sense, meaning that the usual DiPerna--Lions treatment of the problem is not applicable {(in particular, the divergence of the velocity might be unbounded)}. Instead, we use Riemann--Stieltjes integration to interpret the weak formulation, leading to a well-posedness theory in regimes not covered by existing works. The most general results are for the one-dimensional problem, with generalisations to multiple dimensions in the particular case of log-Lipschitz velocities.
\end{abstract}
\section{Introduction}
Consider the transport equation
\begin{equation}\label{eq:transport}
\begin{cases}
\partial_t u + b\cdot\nabla u = 0 & \text{for } x\in\R^d, t>0 \\
u(\cdot,0) = u_0
\end{cases}
\end{equation}
for some $b\from\R^d\times\R_+\to\R^d$ and $u_0\from\R^d\to\R$. The link between \eqref{eq:transport} and the characteristic equation
\begin{equation}\label{eq:ode}
\dot{X}_t = b(X_t,t)
\end{equation}
is immediate when $b$ and $u_0$ are smooth and bounded, as the solution is given via the flow of \eqref{eq:ode}, $u(x,t) = u_0(X_0(x,t))$, where $t\mapsto X_t(x,\tau)$ is the unique solution of \eqref{eq:ode} satisfying $X_\tau = x$. This connection between \eqref{eq:transport} and \eqref{eq:ode} is still evident when regularity assumptions on $b$ are relaxed. Most prominently, DiPerna--Lions \cite{DiPLio89} proved well-posedness of \eqref{eq:transport} in the sense of distributions,
\begin{equation}\label{eq:transportWeak}
\int_0^\infty\int_{\R^d}u\partial_t\phi + u\Div(\phi b)\,dx\,dt + \int_{\R^d}u_0(x)\phi(x,0)\,dx = 0
\end{equation}
for all $\phi\in C_c^\infty(\R^d\times\R_+)$,
assuming $b\in W^{1,1}_x$ and with $\Div b\in L^\infty$. This was generalized to $b\in \mathrm{BV}$ with $\Div b\in L^\infty$ by Ambrosio~\cite{ambrosio_transport_2004}. The connection with \eqref{eq:ode} is here via so-called regular Lagrangian flows, which in a sense are ``almost everywhere'' solutions of \eqref{eq:ode} (see e.g.~\cite{CriDel08}).

Conversely, the well-posedness theory for \eqref{eq:ode} is quite separate from that of \eqref{eq:transport}, and there is a multitude of conditions that ensure existence and uniqueness for \eqref{eq:ode}, while allowing for irregularities or singularities in $b$ of different types (see e.g.~\cite{AgarwalLakshmi}).
These conditions are often formulated as pointwise regularity conditions, rather than Lebesgue integration-based conditions such as those in the DiPerna--Lions theory; a classical example is the one-sided \textit{Osgood condition}
\begin{equation}\label{eq: osgoodConditionForIntroduction}
    \langle b(x+y,t)-b(x,t), y/|y| \rangle\leq \omega(|y|),\qquad \text{where}\;\int_{0}^\eps\frac{dh}{\omega(h)}=\infty \;\; \forall\ \eps>0.
\end{equation}
It is a fact that if $b\in C_b(\R^d\times\R_+)$ satisfies this condition, then the ODE \eqref{eq:ode} is well-posed~\cite{AgarwalLakshmi}. However, \eqref{eq: osgoodConditionForIntroduction} offers no direct benefit for the PDE \eqref{eq:transportWeak}. In particular, \eqref{eq: osgoodConditionForIntroduction} does not imply that $\Div b$ exists, even as a measure, and the weak formulation \eqref{eq:transportWeak} may therefore be meaningless in the Lebesgue sense.

The purpose of this paper is to show that the Osgood condition \textit{does}, in fact, ensure well-posedness of the transport equation when we interpret the integrals in \eqref{eq:transportWeak} as Riemann--Stieltjes integrals.
Although we require some additional technical assumptions on $b$ and $u_0$, we stress that our assumptions do not imply $\Div b\in L^\infty$ nor $b\in \mathit{BV}$; thus, we are outside the scope of the DiPerna--Lions--Ambrosio theory.

Our most general results will be for the one-dimensional forwards equation
\begin{equation}\label{eq:transport1d}
\begin{cases}
\partial_t u + b\partial_x u = 0, \\
u(\cdot,0) = u_0
\end{cases}
\end{equation}
where the one-dimensionality will imply significantly improved regularity of the flow $X$ (in particular, monotonicity). For the multi-dimensional equation \eqref{eq:transport} we will generalise some of these results for \textit{log-Lipschitz} velocities --- a standard example of non-Lipschitz regularity that still satisfies the Osgood condition. The main body of this paper will therefore concentrate on the one-dimensional problem \eqref{eq:transport1d}, and we return to the multi-dimensional problem in Section \ref{sec:multi-d}. We also treat the one-dimensional backwards problem in Section~\ref{sec:backwards1d}, the vanishing viscosity problem in Section~\ref{sec:vanishing-viscosity}, and the inhomogeneous problem in Section~\ref{sec:inhomogeneous}.

\subsection{An overview of new ideas}\label{sec: sketchOfTheUniquenessArgument}
The weak interpretation \eqref{eq:transportWeak} of \eqref{eq:transport} makes sense granted $\Div b\in L^\infty(\R^d\times \R_+)$. In one spatial dimension, this constraint is particularly restrictive as $b(\cdot,t)$ must then be Lipschitz continuous (see e.g.~Bouchut and James~\cite{BouJam97} for a generalization to \emph{one-sided Lipschitz} $b$). We shall instead consider a weak formulation based on Riemann--Stieltjes integrals. Recall that the Riemann--Stieltjes integral of a function $f$ with respect to $g$ satisfies
\begin{equation}\label{eq: theObviousIdentityOfRiemannStieltjesIntegralInTheSmoothCase}
    \int_{\R} f(x)\, d_xg(x) = \int_{\R}f(x)\frac{dg}{dx}(x)\, dx
\end{equation}
whenever $f\in C^0(\R)$, $g\in C^1_c(\R)$. (The notation $d_xg$ is non-standard, but it will be useful to emphasize which variable we are integrating over.) In fact, the first integral in \eqref{eq: theObviousIdentityOfRiemannStieltjesIntegralInTheSmoothCase} exists for more general cases when neither $g'$ nor $f'$ exist as (say) locally integrable functions, as was first observed by L.~C.~Young \cite{Young1936}: The basic idea is that the Riemann--Stieltjes sums might still converge when one trades some of the regularity in $g$ for increased regularity in $f$. For instance, $f\in C^{0,\alpha}(\R)$ and $g\in C^{0,\beta}(\R)$ for $\alpha,\beta\in(0,1]$ satisfying $\alpha+\beta>1$ is sufficient.

Concentrating on the one-dimensional case first, our weak formulation of \eqref{eq:transport1d} is then
\begin{equation}\label{eq: introductionWeakRiemannStieltjesFormulation}
        \int_0^{\infty}\int_{\R} u \partial_t\phi\, dx\, dt + \int_0^{\infty}\int_{\R} u \,d_x (b \varphi) \, dt + \int_{\R} u_0\phi(x,0)\, dx = 0,
\end{equation}
for $\varphi\in C_c^\infty(\R\times [0,\infty))$, where $\int_{\R} u \,d_x (b \varphi) $ is interpreted as a Riemann–Stieltjes integral in $x$. The conditions that we will impose on $b$ and $u_0$ (cf.~Assumptions~\ref{ass:b-conditions} and \ref{ass:uzero-conditions}) --- in particular, the Osgood condition on $b$ --- are made precisely to make this integral well-defined. As it turns out, these conditions are also sufficient in order to make the Cauchy problem well-posed.

To contrast our approach to the established theory, DiPerna--Lions~\cite{DiPLio89} impose conditions on $b$ ensuring that the non-conservative product $b\partial_x u$ is meaningful in the sense of distributions, \emph{regardless of the regularity of $u$}.
In contrast, our conditions on $b$ (and $u_0$) do not yield an inherent meaning of the product $b\partial_x u$, but by taking into account the \emph{expected regularity} of the solution $u$, we can give meaning to the product --- effectively trading some regularity in $b$ for more regularity in $u$, in the spirit of Young~\cite{Young1936}.
A similar mechanism was mentioned by Lions and Seeger~\cite[Theorem 3.7]{lions_transport_2024}, but where the required H\"older regularity on $b$ and $u$ was hypothesised, rather than proved. We also mention Panov~\cite{panov_generalized_2008}, who interprets the one-dimensional transport equation in a duality sense (dually to the continuity equation). This interpretation is similar to the work of Bouchut and James~\cite{BouJam97}, but under somewhat weaker conditions.

Interpreting integrals of low regularity is the core aspect of \emph{rough path theory}, where additional information (``higher-order Taylor remainder terms'') is added to the Riemann--Stieltjes integrals in order to ensure their convergence and continuity. However, we stress that all integrands in our work will have sufficient regularity for a canonical interpretation of the Riemann--Stieltjes integrals, and rough path theory is therefore not required. We refer to~\cite{Hai2011} for a nice introduction and application to spatially rough SPDEs, as well as the comprehensive monographs~\cite{FriHai2014,FriVic2010}.

The existence of solutions of \eqref{eq: introductionWeakRiemannStieltjesFormulation} (Theorem \ref{thm:existence-of-solution}) follows from a standard mollification argument. Our uniqueness result, Theorem \ref{thm:uniqueness-and-stability}, is harder to prove and requires new techniques and estimates. To illustrate, we give here a brief description of our uniqueness proof and the main difficulties it overcomes.

Since the equation is linear, it suffices to prove uniqueness for the case $u_0\equiv0$. In the smooth case, one could multiply \eqref{eq:transport1d} with $\sign(u)$ (or a smoothened version of it) to conclude that also $|u|$ solved the equation. From there, one could integrate in $x$ to find that
\begin{equation}\label{eq: theL1ControlInSmoothCase}
    \partial_t \|u(t)\|_{L^1(\R)} = \int_{\R} |u(x,t)|\partial_xb(x,t)\, dx \leq \sup_{x} \partial_x b(x,t)\|u(t)\|_{L^1(\R)},
\end{equation}
and the sought conclusion $u=0$ would then follow from Gr\"onwall's inequality and the smoothness assumption, in particular $\partial_x b\in L^\infty$. This technique also works for one-sided Lipschitz velocities, where $\sup_x\partial_x b<\infty$; such velocity fields have been studied extensively both in one and multiple dimensions, and we refer to Bouchut--James~\cite{BouJam97}, Petrova--Popov~\cite{PetPop99}, Bianchini--Gloyer~\cite{BiaGlo10} and Lions--Seeger~\cite{lions_transport_2024}.

Although we are outside of the Lipschitz setting, Theorem \ref{thm:renormalizability} shows that the first part of \eqref{eq: theL1ControlInSmoothCase} still holds true here, where it reads
\begin{equation}\label{eq: introductionThePreGeneralizedGrönwallInequality}
    \partial_t \|u(t)\|_{L^1(\R)} = \int_{\R} |u(x,t)|\, d_xb(x,t).
\end{equation}
Of course, as $b$ now lacks Lipschitz continuity, we cannot proceed through Grönwall's inequality. A novel \emph{nonlinear} bound, assuming for a moment that $C\coloneqq \sup_{t>0}|u(t)|_{TV}<\infty$, is given in Proposition~\ref{prop: controlOnIntegralWithBVandOnesidedRegularity}:
\begin{align}\label{eq: alternativeBoundWhenBIsNotOneSidedLipschitz}
    \int_{\R}|u(x,t)|\, d_xb(x,t)\leq C\lambda(t)\omega_b\big(\|u(t)\|_{L^1}/C\big),
\end{align}
where $(t,h)\mapsto \lambda(t)\omega_b(h)$ is the assumed Osgood modulus of $b$.
Inserting this in \eqref{eq: introductionThePreGeneralizedGrönwallInequality} and applying the Osgood--Bihari--LaSalle inequality (in place of Gr\"onwall's inequality), we again get the desired conclusion that $\|u(t)\|_{L^1(\R)}=0$ for all $t\geq 0$.
But the $BV$-restriction on $u$ is undesirable, and not actually necessary, as it suffices to assume locally finite $p$-variation of $u$ for some arbitrary $p\in[1,\infty)$: One may then decompose $|u|$ in a Littlewood--Paley-like manner (cf.~Lemma~\ref{lem: monotoneDecomposition}), and then apply \eqref{eq: alternativeBoundWhenBIsNotOneSidedLipschitz} to each component, which leads to a more general $p$-variational version of \eqref{eq: alternativeBoundWhenBIsNotOneSidedLipschitz}, given by Theorem~\ref{thm: controlOnIntegralWithPVariationandOnesidedRegularity}.
From there, uniqueness again follows by the Osgood inequality. The technical machinery needed for this argument is developed in Section~\ref{sec:nonlinear-l1-estimate}.

This sums up the uniqueness argument in one dimension. We devote the remainder of this discussion to the multidimensional case $x\in\R^d$. Our weak formulation of \eqref{eq:transport} is then
\begin{equation}\label{eq: introductionWeakRiemannStieltjesFormulationSeveralVariables}
        \int_0^{\infty}\int_{\R^d} u \partial_t\phi\, dx\, dt + \int_0^\infty\int_{\R^{d}} u\,d_x(b\varphi) \,dt + \int_{\R^d} u_0\phi(x,0)\, dx = 0,
\end{equation}
for $\varphi\in C_c^\infty(\R^d\times [0,\infty))$, with the short-hand notation
\begin{equation}\label{eq: firstPlaceWhereXhatAppears}
\int_{\R^{d}} u\,d_x(b\varphi)  = \sum_{i=1}^d \int_{\R^{d-1}}\bigg(\int_{\R} u\,d_{x_i}(b_i\varphi)\bigg)\,d\hat{x}_i,
\end{equation}
and where $b=(b_1,b_2,\dots, b_d)$, $x=(x_1,x_2,\dots,x_d)$, and $\hat{x}_{i}\in \R^{d-1}$ is the $(d-1)$-tuple of entries in $x$ whose index differs from $i$, i.e.~$\hat{x}_i=(x_1,\dots, x_{i-1},x_{i+1},\dots,x_d)$. Several arguments from the one-dimensional setting carry over to the multi-dimensional one, since the $d$ Riemann--Stieltjes integrals from the transport term in \eqref{eq: introductionWeakRiemannStieltjesFormulationSeveralVariables} can be dealt with individually.

The true difficulty now is the lack of regularity one can expect from the solution $u$, thus jeopardising the convergence of the Riemann--Stieltjes integrals. Canonically, one expects $u$ to be given by $u(x,t)=u_0(X_t^{-1}(x))$, where $X_t^{-1}$ is the inverse of the flow $X_t$ of \eqref{eq:ode} (or put differently, the backwards flow of \eqref{eq:ode}), and where the forwards flow $X_t$ exists and is unique under the Osgood assumption on $b$. In one space dimension, the inverse map $X^{-1}_t$ is at least monotone, so that $u_0\mapsto u_0\circ X_t^{-1}$ maps, for example, $BV_{\loc}(\R)$ to itself. But in higher dimensions, this regularity is lost. However, the situation is manageable in the log-Lipschitz scenario where, essentially,
\begin{equation}\label{eq:log-lipschitz-condition_intro}
|b(x+y,t)-b(x,t)|\leq C|y|\log(1/|y|) \qquad\text{for }|y|\ll 1
\end{equation}
(see Assumption~\ref{ass:b-conditions-multi-d} for the precise condition), in which case the corresponding backwards flow turns out to be H\"older regular for bounded times. Regularity bounds like~\eqref{eq:log-lipschitz-condition_intro} appear, among other places, in the velocity field in the transport equation satisfied by the vorticity for two-dimensional incompressible Euler (see e.g.~Chapter~7 of Bahouri, Chemin, Danchin~\cite{bahouri_fourier_2011}, in particular Section~7.1.1). Log-Lipschitz regularity bounds (and, in particular, uniqueness of the corresponding flow) have also been derived for Sobolev regular velocity fields and, in particular, for solutions of the incompressible Navier--Stokes equations; see Zuazua~\cite{Zua2002}, Chemin--Lerner~\cite{CheLer1995}, Dashti--Robinson~\cite{DasRob2009} and the references therein. Note, however, that incompressible flows satisfy $\Div b=0$, and so there is no problem in interpreting $b\cdot \nabla u$ in the distributional sense. We mention also the recent work \cite{galeati_well-posedness_2025} which treats transport equations with an Osgood velocity field $b$, but with an additional transport term driven by a rough path. However, since they assume that the velocity is ``almost incompressible'', $\Div b \in L^\infty_x$, they again do not face the same issues in interpreting the product $b\cdot \nabla u$.

\subsection{Assumptions and solution concept}
We here present the context for the one-dimensional case; the multi-dimensional case is treated in Section \ref{sec:multi-d}.
\begin{definition}\label{def:one-sided-osgood}
An \emph{Osgood modulus} is a continuous function $\omega\colon [0,\infty)\to[0,\infty)$ such that $\omega(h)>0$ for $h>0$ and
 \begin{align}\label{eq: OsgoodCriterion}
     \int_0^{\epsilon}\frac{dh}{\omega(h)}=\infty
 \end{align}
for any $\eps>0$.
\end{definition}

\begin{example}\label{example:osgood-moduli}
The typical examples of Osgood moduli $\omega$ are
\begin{align*}
    \omega_0(h) \coloneqq h, \qquad
    \omega_1(h) \coloneqq h\abs{\log(h)}, \qquad
    \omega_2(h) \coloneqq h\abs{\log(h)}\abs{\log(\abs{\log(h)})},
\end{align*}
et cetera (all defined for $h\ll 1$, and extended linearly for $h\not\ll 1$).
The function $\omega(h)\coloneqq h^\alpha$ is not Osgood for any $\alpha<1$, nor is $\omega(h)\coloneqq h\abs{\log(h)}^\alpha$ for $\alpha>1$.
\end{example}

For the forwards, one-dimensional problem \eqref{eq:transport1d} we will make the following assumptions on $b$.
\begin{assumption}[The velocity field]\label{ass:b-conditions}
We assume that $b\from\R\times\R_+\to\R$ is measurable and that it satisfies the following properties.
\begin{enumerate}[label=(\Alph*)]
\item\label{ass:bounded} \emph{Boundedness:} $b$ is bounded.

\item\label{ass:cont} \emph{Spatial continuity:} $b(\cdot,t)\in C(\R)$ for a.e.~$t\in\R_+$.

\item\label{ass:osoc}  \emph{One-sided Osgood condition:} $b$ satisfies the one-sided estimate
 \begin{equation}\label{eq: theOsgoodCondition}
     b(x,t)-b(y,t)\leq \lambda(t)\omega_b(x-y),\qquad x>y,\quad \text{a.e.}\, t\in(0,\infty),
 \end{equation}
 for some weight $\lambda\in L^1_{\loc}([0,\infty))$ and some concave Osgood modulus $\omega_b$.

 \item\label{ass:oshc} \emph{One-sided Hölder continuity of orders less than one:} The Osgood modulus $\omega_b$ of $b$ satisfies the condition
 \begin{equation}\label{eq: oneSidedHölderContinuousOfAllOrdersLessThanOne}
     \sup_{h\in(0,1)}\frac{\omega_b(h)}{h^{\alpha}}<\infty,\quad \forall \alpha\in(0,1).
 \end{equation}
\end{enumerate}
\end{assumption}
\noindent As for the initial data, we set one regularity condition.
\begin{assumption}[The initial data]\label{ass:uzero-conditions}
We assume the following on $u_0$.
\begin{enumerate}[label=(E)]
\item\label{ass:regularityOfInitialData} \emph{Locally finite variation:} For every $R>0$ there is a $p\in[1,\infty)$ such that $|u_0|_{V^p([-R,R])}<\infty$, where $|\cdot|_{V^p}$ denotes the $p$-variation (see Section \ref{sec:p-variation}).
\end{enumerate}
\end{assumption}
 \noindent A detailed discussion regarding these assumptions can be found in Remark \ref{rem: remarkOnTheAssumptions} at the end of this section. Our solution concept is then as follows.
\begin{definition}[Weak solution]\label{def: definitionOfWeakSolution}
Let $b$ and $u_0$ satisfy Assumptions~\ref{ass:b-conditions} and \ref{ass:uzero-conditions}, respectively. We then say that $u\in L^\infty_{\loc}(\R\times [0,\infty))$ is a weak solution of \eqref{eq:transport1d} if the following two conditions hold.
\begin{enumerate}[label=(\roman*)]
\item\label{def: definitionOfWeakSolution-finite-variation}
It has locally finite variation: For every $R,T>0$ there is some $p\in[1,\infty)$ such that
\begin{equation}\label{eq: regularityAssumptionOnSolution}
  \esssup_{0\leq t<T} |u(t)|_{V^p([-R,R])} < \infty
\end{equation}
(where the $p$-variation is defined in Section \ref{sec:p-variation}).

\item\label{def: definitionOfWeakSolution-riemann-stieltjes}
The equation is satisfied weakly in the Riemann--Stieltjes sense: For every $\phi\in C_c^\infty(\R\times [0,\infty))$ we have
\begin{equation}\label{eq: theWeakRiemannStieltjesFormOfTheEquation}
    \int_0^{\infty}\int_{\R} u \partial_t\phi\, dx\, dt + \int_0^\infty\int_{\R} u\,d_x(b\varphi) \,dt + \int_{\R} u_0(x)\phi(x,0)\, dx = 0
\end{equation}
where $\int_{\R} u\,d_x(b\varphi)$ is interpreted as a Riemann--Stieltjes integral in the $x$ variable (see~Section~\ref{sec:riemann-stieltjes}).
\end{enumerate}
\end{definition}

The fact that the Riemann–Stieltjes integral in \eqref{eq: theWeakRiemannStieltjesFormOfTheEquation} is well-defined and finite, is explained in Section \ref{sec:riemann-stieltjes}; specifically, it follows from Corollary \ref{cor: finitePVariationOfTheVelocityB}, the regularity constraint \eqref{eq: regularityAssumptionOnSolution}, and Theorem \ref{thm:young-integral-properties} \ref{thm:young-integral-properties-well-defined}.

\begin{theorem}[Main Theorem, one dimension]\label{thm:forwards-problem-well-posed-1d}
    Let $b$ and $u_0$ satisfy Assumptions~\ref{ass:b-conditions} and \ref{ass:uzero-conditions}, respectively. Then, there exists a unique weak solution $u$ of \eqref{eq:transport1d} in the sense of Definition \ref{def: definitionOfWeakSolution}. The solution lies in $C([0,\infty),L^1_{\loc}(\R))$ and is, up to a null set, uniquely characterised by
    \begin{equation}\label{eq:forwards-solution-formula}
        u(X_t(x),t)=u_0(x),
    \end{equation}
where $X_t$ is the flow generated by the velocity field $b$.
\end{theorem}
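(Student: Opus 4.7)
The plan is to combine the two theorems already announced in Section~\ref{sec: sketchOfTheUniquenessArgument} --- existence via Theorem~\ref{thm:existence-of-solution} and uniqueness via Theorem~\ref{thm:uniqueness-and-stability} --- and then separately verify that the candidate $v(x,t) \coloneqq u_0(X_t^{-1}(x))$ is a weak solution, so that by uniqueness $u = v$ almost everywhere. Well-posedness itself is immediate: Theorem~\ref{thm:existence-of-solution} supplies existence, while Theorem~\ref{thm:uniqueness-and-stability}, applied by linearity of \eqref{eq: theWeakRiemannStieltjesFormOfTheEquation} to the difference of any two weak solutions (which is a weak solution with zero initial datum), supplies uniqueness.

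For the flow characterization \eqref{eq:forwards-solution-formula}, Assumption~\ref{ass:b-conditions} guarantees a unique global forward flow $X_t$: the one-sided Osgood condition \eqref{eq: theOsgoodCondition} together with spatial continuity and boundedness yields existence, forward uniqueness, and the linear bound $|X_t(x) - x| \leq \int_0^t \|b(\cdot,s)\|_\infty\,ds$. In one space dimension uniqueness of characteristics forbids crossings, so $x \mapsto X_t(x)$ is a strictly monotone continuous bijection of $\R$, and $X_t^{-1}$ enjoys the same properties.

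With this in hand, I verify the two clauses of Definition~\ref{def: definitionOfWeakSolution} for $v$. Clause~\ref{def: definitionOfWeakSolution-finite-variation} is immediate: since $X_t^{-1}$ is a monotone continuous bijection, composition with it preserves $p$-variation, and thus $|v(t)|_{V^p([-R,R])} \leq |u_0|_{V^p([-R-M, R+M])}$ with $M \coloneqq \int_0^T \|b(\cdot,s)\|_\infty\,ds$, which is finite uniformly in $t \in [0,T]$ by Assumption~\ref{ass:uzero-conditions}. For clause~\ref{def: definitionOfWeakSolution-riemann-stieltjes} I approximate: let $b_\eps$ be a standard mollification of $b$ (preserving the Osgood modulus up to a fixed constant), let $X_t^\eps$ be the associated classical flow, and set $v_\eps(x,t) \coloneqq u_0((X_t^\eps)^{-1}(x))$. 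Classical transport theory shows $v_\eps$ satisfies the distributional weak form, which by \eqref{eq: theObviousIdentityOfRiemannStieltjesIntegralInTheSmoothCase} coincides with \eqref{eq: theWeakRiemannStieltjesFormOfTheEquation}. Osgood stability of the ODE yields $X_t^\eps \to X_t$ locally uniformly, and consequently $(X_t^\eps)^{-1} \to X_t^{-1}$ locally uniformly and $v_\eps \to v$ in $L^1_{\loc}$. The time continuity $u \in C([0,\infty); L^1_{\loc}(\R))$ likewise follows from the representation $u = u_0 \circ X_t^{-1}$, the continuity $t \mapsto X_t^{-1}$, and dominated convergence.

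The main obstacle is passing to the limit in the Riemann--Stieltjes term $\int v_\eps \, d_x(b_\eps \phi)$: Young-type integrals are continuous only under a combination of uniform $p$-variation bounds on both integrand and integrator together with pointwise (or uniform) convergence. The uniform $p$-variation bound on $b_\eps$ is supplied by Corollary~\ref{cor: finitePVariationOfTheVelocityB}, while the uniform $p$-variation bound on $v_\eps$ follows from the monotone-flow argument of the previous paragraph applied uniformly in $\eps$; Theorem~\ref{thm:young-integral-properties} then closes the passage to the limit. This step embodies precisely the mechanism --- trading regularity between integrand and integrator --- emphasised in Section~\ref{sec: sketchOfTheUniquenessArgument}, and it depends essentially on the one-dimensional monotonicity of the flow (whence the specialisation to log-Lipschitz velocities in Section~\ref{sec:multi-d} for higher dimensions).
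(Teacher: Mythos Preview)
Your overall architecture is correct and is exactly what the paper does: Theorem~\ref{thm:existence-of-solution} already constructs the solution \emph{as} $u_0\circ X_t^{-1}$, Proposition~\ref{prop:time-continuity} gives $u\in C([0,\infty),L^1_{\loc})$, and Theorem~\ref{thm:uniqueness-and-stability} (together with the renormalization in Theorem~\ref{thm:renormalizability}) supplies uniqueness. Citing these is the proof; your ``separate verification'' that $v=u_0\circ X_t^{-1}$ is a weak solution simply re-does Theorem~\ref{thm:existence-of-solution}.

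That said, your re-derivation contains a genuine factual error. Under the \emph{one-sided} Osgood condition~\ref{ass:osoc}, forward uniqueness prevents crossings but does \emph{not} prevent merging of characteristics: $x\mapsto X_t(x)$ is non-decreasing and surjective (Theorem~\ref{thm:ode-well-posed}), but it need not be strictly monotone or injective. A simple example is $b(x)=-\mathrm{sign}(x)|x|^{1/2}$ (suitably truncated), which satisfies \ref{ass:osoc}--\ref{ass:oshc} with $\omega_b(h)=h$ since $b(x)-b(y)\leq 0$ for $x>y$, yet characteristics reach $0$ in finite time and coalesce there. Hence $X_t^{-1}$ is ill-defined on an at most countable set $E_t$, and your claim that $(X_t^\eps)^{-1}\to X_t^{-1}$ locally uniformly is not correct in general. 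The paper's proof of Theorem~\ref{thm:existence-of-solution} handles this carefully: it identifies the bad set $F_t=X_t(D)\cup E_t$ (where $D$ are the discontinuities of $u_0$), and proves only pointwise convergence $u^\eps\to u$ on $\R\setminus F_t$, which is enough for Theorem~\ref{thm:young-integral-properties}~\ref{thm:young-integral-properties_pw-convergence}.

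Finally, your argument for time continuity via the representation and ``continuity of $t\mapsto X_t^{-1}$'' inherits the same difficulty near merging points and is not rigorous as stated. The paper instead proves $u\in C([0,\infty),L^1_{\loc})$ for \emph{arbitrary} weak solutions directly from the weak formulation (Proposition~\ref{prop:time-continuity}); note that this generality is actually needed inside the uniqueness proof (Theorem~\ref{thm:uniqueness-and-stability}) to pass from test functions in $C_c^\infty(\R\times[0,\infty))$ to the truncated form~\eqref{eq:weakFormulationTruncated}.
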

Section~\ref{sec:multi-d} is devoted to the multi-dimensional setting and the corresponding result is summarized below (see Theorem~\ref{thm:uniquenessx-and-stability-multi-d} for a more precise formulation).
\begin{theorem}[Main Theorem, multiple dimensions]
If $b=b(x,t)\from\R^d\times[0,T]\to\R^d$ is bounded and log-Lipschitz continuous in $x$, and if $u_0$ is locally H\"older continuous, then there exists a unique locally Hölder continuous solution of \eqref{eq:transport}, and it is given by \eqref{eq:forwards-solution-formula}.
\end{theorem}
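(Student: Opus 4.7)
The plan is to work entirely through the characteristic flow $X$ of $b$, exploiting the fact that log-Lipschitz regularity of $b$ propagates to H\"older regularity of $X_t$ and $X_t^{-1}$ on bounded time intervals. Concretely, a standard ODE comparison applied to $\Phi(t) := |X_t(x) - X_t(y)|$ using the log-Lipschitz bound yields $\Phi(t) \leq C|x-y|^{\exp(-Ct)}$ for $|x-y|$ small, so that $X_t^{\pm 1}$ is locally H\"older on $[0,T]$ with exponent $\alpha_T := e^{-CT} > 0$. Setting $u(x,t) := u_0(X_t^{-1}(x))$ then produces a function that is locally H\"older of exponent $\alpha_T \beta$, where $\beta$ is the local H\"older exponent of $u_0$; this is the candidate solution, and it is the source of all the regularity needed to make the Riemann--Stieltjes integrals in \eqref{eq: introductionWeakRiemannStieltjesFormulationSeveralVariables} converge.

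For existence I would verify the weak formulation by mollification: replace $b$ by a spatial mollification $b^\epsilon$ whose log-Lipschitz modulus is uniform in $\epsilon$, construct $u^\epsilon$ as a classical solution via the corresponding smooth flow, and pass to the limit. Stability of the Osgood ODE gives $u^\epsilon \to u$ locally uniformly, and the flow estimate above provides a uniform local H\"older bound; in each coordinate slice, $\int u^\epsilon \, d_{x_i}(b_i^\epsilon \varphi) \to \int u \, d_{x_i}(b_i \varphi)$ by continuity of Young's integral on H\"older spaces, using that $b_i$ is locally $C^\alpha$ for every $\alpha < 1$, so $\alpha_T\beta + \alpha > 1$ for $\alpha$ close to $1$.

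For uniqueness, reduce to $u_0 \equiv 0$ by linearity and show that any locally H\"older weak solution must vanish. The scheme mirrors the one-dimensional argument of Section \ref{sec:nonlinear-l1-estimate}: a multi-dimensional analogue of the renormalization identity (Theorem \ref{thm:renormalizability}) should yield
\[
\partial_t \int_{\R^d} |u|\psi\, dx = \sum_{i=1}^d \int_{\R^{d-1}}\!\!\int_\R |u|\psi\, d_{x_i}(b_i)\, d\hat{x}_i
\]
for a spatial cutoff $\psi$ chosen so that characteristics starting in $\supp \psi$ stay in a slightly larger compact for $t \in [0,T]$ (eliminating boundary flux). On each slice $\hat{x}_i = \text{const}$ the map $x_i \mapsto u(x,t)$ has finite $p$-variation for $p = 1/(\alpha_T\beta)$ by its H\"older regularity, so Theorem \ref{thm: controlOnIntegralWithPVariationandOnesidedRegularity} applies and, after summing over $i$, gives
\[
\partial_t \|u(t)\|_{L^1(\supp \psi)} \leq C\lambda(t)\,\omega\bigl(\|u(t)\|_{L^1(\supp \psi)}\bigr)
\]
for some Osgood modulus $\omega$; the Osgood--Bihari--LaSalle inequality then forces $\|u(t)\|_{L^1(\supp \psi)} = 0$, and exhausting $\R^d$ by enlarging $\psi$ completes the argument. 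The main obstacle will be setting up the multi-dimensional renormalization identity together with the slice-by-slice application of the one-dimensional nonlinear estimate: a Fubini-type argument must transfer the global H\"older regularity of $u$ into uniform $p$-variation bounds on $\hat{x}_i$-slices with measurable dependence on $\hat{x}_i$, and the mollification must be chosen so that the renormalization identity survives the limit. Once this coordinate-wise decomposition is in place, the closure of the estimate by the Osgood--Bihari--LaSalle inequality is essentially routine.
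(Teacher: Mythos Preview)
Your plan matches the paper's approach closely: H\"older regularity of the flow from the log-Lipschitz bound, existence by mollification and Young-integral stability, renormalization, and then the slice-wise application of Theorem~\ref{thm: controlOnIntegralWithPVariationandOnesidedRegularity} followed by an Osgood argument.

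There is one step that you pass over too quickly and that requires a specific property of the log-Lipschitz modulus. After applying Theorem~\ref{thm: controlOnIntegralWithPVariationandOnesidedRegularity} on each slice you obtain
\[
\sum_{i=1}^d \int_{\R^{d-1}} \widetilde\omega\bigl(\|u\phi(\hat{x}_i,t)\|_{L^1(\R)}\bigr)\,d\hat{x}_i,
\]
and you assert that ``after summing over $i$'' this is bounded by $C\omega(\|u\phi(t)\|_{L^1(\R^d)})$. That passage is a Jensen step over the $\hat{x}_i$-integral, and it requires concavity of the modulus. But the modulus $\omega^*$ produced by Theorem~\ref{thm: controlOnIntegralWithPVariationandOnesidedRegularity} is \emph{not} concave in general (it involves $h^{2-2p}\omega_b(h^{2p-1})$). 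The paper resolves this by exploiting the explicit form of the log-Lipschitz modulus $\ell$: a direct computation shows that the corresponding $\widetilde\omega$ is dominated by $\widehat\omega(h)=(2p-1)k\,\ell(h/k)+c_p h$, which \emph{is} concave, so Jensen applies and the $\hat{x}_i$-integral can be pulled inside. This is precisely where the restriction to the log-Lipschitz modulus (rather than a general Osgood modulus) is used in the uniqueness proof, so you should make this step explicit rather than folding it into ``after summing over $i$''.
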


We also mention here that in Section~\ref{sec:backwards1d} we treat the one-dimensional backwards problem, showing that the ``canonical solution'' is a weak solution, and that it is unique in being stable with respect to smooth perturbations of $b$. In Section~\ref{sec:vanishing-viscosity} we study the viscous approximation of the multi-dimensional backwards equation by means of the associated stochastic differential equations, and prove that our solution is the vanishing viscosity solution. Finally, in Section~\ref{sec:inhomogeneous} we show that Duhamel's principle holds for the one-dimensional forwards equation.

\begin{remark}[Regarding the assumptions on $b$ and $u_0$]\label{rem: remarkOnTheAssumptions}\hspace{0pt}
\begin{enumerate}[label=(\roman*)]
\item Some assumptions can be weakened to local variants without much added difficulty: Assumption \ref{ass:bounded} can be replaced by $|b(x,t)|\leq m(t)(1+|x|)$ with $m\in L^1_{\loc}([0,\infty))$ and in assumption \ref{ass:osoc} and \ref{ass:oshc} one could replace the global modulus $\omega_b$ with a family of local ones $\{\omega_R(\cdot)\}_{R>0}$, each member valid for $x,y\in[-R,R]$. For the sake of simplicity, we instead make global assumptions.
 \item The continuity assumption \ref{ass:cont} on $b$ is to ensure that the Riemann–Stieltjes integral is well-defined, as one would otherwise need to deal with overlapping discontinuities (consider the classical example $b(x)=-\sign(x)$). With a correct interpretation of the integral it might be possible to extend the theory to discontinuous $b$ (as done for instance in Bouchut--James~\cite{BouJam97}), but this is beyond the scope of this paper.
\item  Assumption \ref{ass:osoc} ensures the well-posedness of the ODE \eqref{eq:ode}, while assumption \ref{ass:oshc} ensures the well-posedness of the Riemann–Stieltjes integral in \eqref{eq: theWeakRiemannStieltjesFormOfTheEquation}. Both \ref{ass:osoc} and \ref{ass:oshc} hold automatically when $b$ is one-sided Lipschitz continuous. Note also that \ref{ass:oshc} is not implied by \ref{ass:osoc}: The linear interpolation of $(x_n,\log(e+\frac{1}{x_n})^{-1})_{n\in \N}$ results in a concave Osgood modulus (provided $x_n\to0$ sufficiently fast) that fails to satisfy~\eqref{eq: oneSidedHölderContinuousOfAllOrdersLessThanOne}.
\item The regularity assumption \ref{ass:regularityOfInitialData} on $u_0$ is in particular implied by local Hölder continuity or, more generally, by local one-sided Hölder continuity; see Lemma \ref{lem:oneSidedHolderContinuityImpliesFiniteWeakerVariation}.
\end{enumerate}
\end{remark}

\section{Preliminaries}

\subsection{Notation}\label{sec:notation}

The set $B_r(x)$ will denote the open ball with radius $r$ centred at $x$.

Let $\rho\in C_c^\infty(\R)$ be a non-negative mollifier and let $\rho_\eps(x)\coloneqq \eps^{-1}\rho(\eps^{-1}x)$. For a function $v$ we let $[v]^\eps$ denote the convolution (mollification) by $\rho_\eps$ in the $x$ variable. Likewise, we let $[v]^{\eps,\delta}$ denote convolution with $\rho_\eps$ in the $x$-variable and with $\rho_\delta$ in the $t$-variable. In $d$ space dimensions (cf.~Section~\ref{sec:multi-d}), the symmetric mollifier $x\mapsto\eps^{-d}\rho(\eps^{-1}|x|)$ is used.

\subsection{ODEs with Osgood velocity fields}\label{sec: ODEsWithOsgoodVelocityFields}
In this section we review some well-posedness theory on the ordinary differential equation \eqref{eq:ode}. By a \emph{solution of \eqref{eq:ode}} we will mean a Lipschitz continuous function $t\mapsto X_t$ such that \eqref{eq:ode} is satisfied at almost every $t>0$, and such that $X_0=x$.

The following lemma is often attributed to Bihari~\cite{Bihari1956} and LaSalle~\cite{LaSalle1949}, but is originally due to Osgood~\cite{Osg1898}.

\begin{lemma}[Osgood's inequality]\label{lem:osgood}
Let $y\from[0,\infty)\to[0,\infty)$ satisfy the differential inequality
\begin{equation}\label{eq:osgood-inequality}
y(t) \leq y_0 + \int_0^t \lambda(s)\omega(y(s))\,ds
\end{equation}
where $y_0\geq0$, $\lambda\in L^1_\loc([0,\infty))$ is non-negative, and $\omega$ satisfies \eqref{eq: OsgoodCriterion}.
Then $y(t) \leq \Psi(y_0,t)$, where $\Psi(y_0,t)\coloneqq G_{y_0}^{-1}\bigl(\Lambda(t)\bigr)$, and $G_{y_0}$ and $\Lambda$ are the functions $G_{y_0}(y)\coloneqq \int_{y_0}^y \frac{1}{\omega(s)}\,ds$ and $\Lambda(t)\coloneqq \int_0^t \lambda(s)\,ds$.
\end{lemma}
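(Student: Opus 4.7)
The plan is to reduce to the separable ODE $z'=\lambda\,\omega(z)$ via a standard comparison argument. Set $z(t)\coloneqq y_0+\int_0^t\lambda(s)\omega(y(s))\,ds$, so that the hypothesis reads $y(t)\leq z(t)$, and $z$ is absolutely continuous with $z'(t)=\lambda(t)\omega(y(t))$ at a.e.~$t$. Assuming $\omega$ is nondecreasing---which is the case in all intended applications of the lemma, since the Osgood moduli one considers (e.g.~those in Example~\ref{example:osgood-moduli}) are concave and vanish at $0$, hence monotone near the origin---this gives $z'(t)\leq\lambda(t)\omega(z(t))$, and it therefore suffices to prove $z(t)\leq\Psi(y_0,t)$.

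For $y_0>0$ we have $z(t)\geq y_0>0$ for every $t\geq 0$, so $G_{y_0}\circ z$ is well-defined and the chain rule yields
\[
\frac{d}{dt}G_{y_0}(z(t))=\frac{z'(t)}{\omega(z(t))}\leq \lambda(t).
\]
Integrating from $0$ to $t$ and using $G_{y_0}(y_0)=0$ gives $G_{y_0}(z(t))\leq\Lambda(t)$, and inverting the strictly increasing map $G_{y_0}$ produces the desired bound $z(t)\leq G_{y_0}^{-1}(\Lambda(t))=\Psi(y_0,t)$, hence $y(t)\leq\Psi(y_0,t)$.

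For the borderline case $y_0=0$, I would use an approximation: for each $\eta>0$ the hypothesis trivially gives $y(t)\leq\eta+\int_0^t\lambda(s)\omega(y(s))\,ds$, and the previous step yields $y(t)\leq\Psi(\eta,t)=G_\eta^{-1}(\Lambda(t))$. The divergence condition~\eqref{eq: OsgoodCriterion} forces $G_\eta(h)\to+\infty$ as $\eta\to 0^+$ for every fixed $h>0$, and consequently $G_\eta^{-1}(\Lambda(t))\to 0$; sending $\eta\to 0^+$ yields $y(t)=0$, consistent with the natural convention $\Psi(0,t)=0$. The main---and essentially only---subtlety in the argument is justifying that $\omega$ may be taken nondecreasing (by passing to a nondecreasing majorant retaining the Osgood property); everything else reduces to a direct chain-rule computation and separation of variables.
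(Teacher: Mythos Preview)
The paper does not give a proof of this lemma; it is stated as a classical result and attributed to Osgood, Bihari, and LaSalle. Your argument is the standard one and is correct under the monotonicity assumption on $\omega$ that you flag. You are right that this is the only real subtlety: the lemma as stated requires only continuity, positivity, and the divergence condition~\eqref{eq: OsgoodCriterion}, none of which force $\omega$ to be nondecreasing. Your suggested fix---replacing $\omega$ by a nondecreasing majorant that still satisfies~\eqref{eq: OsgoodCriterion}---does not work in full generality (a continuous $\omega$ with narrow spikes can have a running maximum that fails the Osgood criterion), so strictly speaking you have not proved the lemma exactly as stated. That said, every application in the paper uses a \emph{concave} Osgood modulus (Assumption~\ref{ass:b-conditions}\ref{ass:osoc}), and a concave $\omega$ with $\omega(0)=0$ is automatically nondecreasing; so your proof covers all the cases that actually arise, which is presumably why the paper does not dwell on the point.
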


\begin{theorem}\label{thm:ode-well-posed}
Assume that $b$ satisfies Assumptions~\ref{ass:bounded},~\ref{ass:cont},~\ref{ass:osoc}. Then the ODE \eqref{eq:ode} with initial data $X_0=x$ has exactly one solution, which we denote by $t\mapsto X_t(x,0)$ (for $t\geq 0$). More generally, the flow starting at $(x,s)$ is denoted $t\mapsto X_t(x,s)$ (for $t\geq s$). This flow is continuous and satisfies
\begin{equation}\label{eq:flowstability}
    |X_t(x,s)-X_t(y,s)| \leq \Psi\bigl(|x-y|,t-s\bigr) \qquad\forall\ x,y\in\R,\ t\geq s,
\end{equation}
where $\Psi$ is as in Lemma \ref{lem:osgood}. The flow map $x\mapsto X_t(x,s)$ is surjective and increasing for fixed $0\leq s \leq t$. If $X_\eps$ is the flow corresponding to a mollified velocity $[b]^\eps$, then $X_\eps\to X$ uniformly on compacts as $\eps\to0$.
\end{theorem}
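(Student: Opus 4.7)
The plan is to construct the flow as a limit of classical flows for mollified velocities, and then to derive every other property of $X$ from a single one-sided stability estimate.
\emph{Existence.} I set $M\coloneqq\|b\|_{L^\infty}$ and mollify $b$ in space to obtain smooth $[b]^\eps$ with $\|[b]^\eps\|_{L^\infty}\leq M$ and Lipschitz in $x$ (with $\eps$-dependent constant). For each $\eps>0$, classical Carathéodory--Picard theory produces a unique global flow $X_\eps$ with $|\dot X_{\eps,t}|\leq M$, so the family $\{X_{\eps,\cdot}(x,s)\}_\eps$ is equi-Lipschitz in $t$; Arzelà--Ascoli extracts a subsequence converging uniformly on compacts to some limit $X$. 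Passing to the limit in
\begin{equation*}
X_{\eps,t}(x,s) = x + \int_s^t [b]^\eps(X_{\eps,\tau}(x,s),\tau)\,d\tau,
\end{equation*}
using that $[b]^\eps(\cdot,\tau)\to b(\cdot,\tau)$ uniformly on compacts for a.e.\ $\tau$ (by Assumption~\ref{ass:cont}) together with uniform convergence of $X_\eps$ and dominated convergence, shows that the limit $X$ solves \eqref{eq:ode}.

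\emph{Stability and uniqueness.} Given two solutions $X,Y$ with data $x,y$ at time $s$, set $\delta(t)\coloneqq X_t-Y_t$. At almost every $t$ with $\delta(t)\neq 0$, applying Assumption~\ref{ass:osoc} to whichever of $X_t,Y_t$ is larger gives
\begin{equation*}
\frac{d}{dt}|\delta(t)| \leq \lambda(t)\,\omega_b(|\delta(t)|),
\end{equation*}
while on the zero set of $\delta$ the absolutely continuous function $|\delta|$ has zero derivative a.e., so the inequality extends to a.e.\ $t$. Osgood's inequality (Lemma~\ref{lem:osgood}) then yields $|\delta(t)|\leq \Psi(|x-y|,t-s)$, which is \eqref{eq:flowstability}. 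Taking $x=y$, the Osgood property~\eqref{eq: OsgoodCriterion} forces $\Psi(0,\tau)=0$ and gives uniqueness; the same estimate also yields joint continuity of $(x,s,t)\mapsto X_t(x,s)$.

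\emph{Monotonicity, surjectivity, and the mollified limit.} If $x<y$ and the orbits $X_t(x,s),X_t(y,s)$ coincided at some $t^\ast>s$, forward uniqueness from their common point would force them equal for all $t\geq t^\ast$; hence they cannot cross, and $x\mapsto X_t(x,s)$ is non-decreasing. Surjectivity follows from $|X_t(x,s)-x|\leq M(t-s)$, which forces $X_t(x,s)\to\pm\infty$ as $x\to\pm\infty$, together with continuity and the intermediate value theorem. Since convolution with a non-negative kernel preserves the one-sided bound~\eqref{eq: theOsgoodCondition}, each $[b]^\eps$ satisfies the same Osgood condition as $b$. Repeating the argument of the previous paragraph for the pair $X_\eps,X$ now yields
\begin{equation*}
|X_{\eps,t}(x,s)-X_t(x,s)|\leq A_\eps + \int_s^t \lambda(\tau)\,\omega_b(|X_{\eps,\tau}-X_\tau|)\,d\tau,
\end{equation*}
with $A_\eps\coloneqq\int_s^T \|[b]^\eps(\cdot,\tau)-b(\cdot,\tau)\|_{L^\infty(K)}\,d\tau\to 0$ on any compact $K$ containing the orbits up to time $T$. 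Osgood's inequality then gives $|X_\eps-X|(t)\leq \Psi(A_\eps,t-s)\to 0$ uniformly on compacts (as $\Psi(\cdot,\tau)$ is continuous at $0$), and uniqueness of $X$ promotes the subsequential limit used for existence to a full limit.

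\emph{Main obstacle.} The most delicate point is justifying the a.e.\ differential inequality for $|\delta|$ on the zero set of $\delta$ --- resolved by absolute continuity of $|\delta|$ together with Lebesgue differentiation --- and, in the last step, accommodating the vanishing inhomogeneity $A_\eps$ within Osgood's framework; the latter reduces to continuity of $y_0\mapsto \Psi(y_0,\tau)$ at $y_0=0$, which is immediate from the Osgood property~\eqref{eq: OsgoodCriterion}.
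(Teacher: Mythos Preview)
Your proof is correct and follows essentially the same route as the paper: existence via compactness, stability and uniqueness via the one-sided differential inequality and Osgood's lemma. The paper is terser on monotonicity and surjectivity (which you spell out) and proves $X_\eps\to X$ by pure compactness plus uniqueness of the limit, whereas you supplement this with a direct quantitative Osgood estimate carrying the inhomogeneity $A_\eps$; both arguments are standard and give the same conclusion.
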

\begin{proof}
Without loss of generality we assume that $s=0$. The existence of solutions follows from the fact that $b$ is continuous in $x$. Let $X_t, Y_t$ be solutions of \eqref{eq:ode} with initial data $x$ and $y$, respectively. Then
\[
\frac{d}{dt}|X_t-X_t| \leq \lambda(t)\omega\bigl(|X_t-Y_t|\bigr).
\]
Integrating with respect to $t$, we see that $y(t)\coloneqq |X_t-Y_t|$ satisfies \eqref{eq:osgood-inequality}, whence \eqref{eq:flowstability} follows from Lemma~\ref{lem:osgood}. The fact that $X_\eps\to X$ is a standard compactness argument: The family $(t\mapsto X_{\eps,t}(x))_{\eps>0}$ is uniformly Lipschitz, so it converges along a subsequence; the limit is a solution since $b$ is continuous; and the limit is unique, so the entire sequence converges; and since the estimate \eqref{eq:flowstability} is uniform in $\eps>0$, the convergence is uniform in both $x$ and $t$.
\end{proof}

\subsection{Functions of finite $p$-variation}\label{sec:p-variation}
\newcommand{\bx}{{\mathbf{x}}}
\newcommand{\by}{{\mathbf{y}}}
\newcommand{\bz}{{\mathbf{z}}}
\newcommand{\Part}{{\mathcal{P}}}

In this section we review theory of functions of finite $p$-variation. The presentation is mostly based on Young's classical reference~\cite{Young1936}. We refer to Friz and Victoir~\cite[Chapter~5]{FriVic2010} for an exposition from a more modern, rough paths viewpoint.

\begin{definition}\label{def:partition}
Let $A\subseteq\R$ be an interval. A \emph{partition} of $A$ is a tuple $\bx=(x_0,\dots,x_N)$ where $x_0,\dots,x_N\in A$ and $x_0<\cdots<x_N$. The set of all partitions of $A$ is denoted $\Part(A)$. When $A$ is bounded, the \emph{resolution} of a partition $\bx$ is the number
\[
|\bx|\coloneqq \max\bigl\{x_0-\inf A, x_1-x_0,\dots,x_N-x_{N-1}, \sup A-x_N\bigr\}.
\]
We write $\by \prec \bx$ if $\mathbf{x}=(x_0,\dots,x_N)$, $\mathbf{y}=(y_0,\dots,y_{N-1})$ and $y_i\in[x_i,x_{i+1}]$ for all $i=0,\dots,N-1$.
\end{definition}

\begin{definition}
Consider an interval $A\subseteq \R$ and let $p\in(0,\infty]$. We define the $p$-variation of a function $f\colon A\to \R$ by
\[
|f|_{V^p(A)} \coloneqq
\begin{cases}
    \displaystyle \sup_{\bx\in\Part(A)}\Bigg(\sum_{i=1}^{n}|f(x_{i})-f(x_{i-1})|^p\Bigg)^{1/p} &\text{for } p\in(0,\infty),\\
     \displaystyle\sup_{x,y\in  A}|f(x)-f(y)| & \text{for } p=\infty.
\end{cases}
\]
We denote $V^p(A)\coloneqq\{f\from A\to\R : |f|_{V^p(A)}<\infty\}$ for $p\in(0,\infty]$. When the domain $A$ is apparent from the context, we write simply $|f|_{V^p}$.

For $\delta>0$ we also define
\[
|f|_{V^\infty_\delta(A)} \coloneqq \sup_{\substack{x,y\in  A\\|x-y|<\delta}}|f(x)-f(y)|.
\]
Note that $|f|_{V^\infty_\delta(A)}\to0$ as $\delta\to0$ if and only if $f$ is uniformly continuous on $A$.
\end{definition}

We list some immediate properties:
\begin{proposition}\label{prop:p-variation}
Let $A\subseteq \R$ be an interval and $f\colon A\to \R$. Then:
\begin{enumerate}[label=\textit{(\roman*)}]
    \item\label{prop:p-variation-limits}
    If $|f|_{V^p(A)}<\infty$ for some $p<\infty$, then $f$ has left and right limits in $A$,
    \item\label{prop:p-variation-monotone}
    $|f|_{V^{q}(A)}\leq |f|_{V^{p}(A)}$ whenever $p\leq q$,
    \item\label{prop:p-variation-seminorm}
    $|\cdot|_{V^{p}(A)}$ is a seminorm for $p\in[1,\infty]$,
    \item\label{prop:p-variation-TV}
    $|f|_{V^{1}(A)} = |f|_{TV(A)}$, where the latter is the total variation seminorm,
    \item\label{prop:p-variation-holder}
    $|f|_{V^{p}(A)}\leq|A| |f|_{C^{0,1/p}(A)}$ for $p\in[1,\infty)$, where $|A|$ is the length of the interval and $|\cdot|_{C^{0,1/p}(A)}$ is the $1/p$-Hölder seminorm.
    \item\label{prop:p-variation-product-rule}
    $|fg|_{V^p(A)} \leq |f|_{V^p(A)}\|g\|_{L^\infty(A)} + \|f\|_{L^\infty(A)}|g|_{V^p(A)}$ for any $g\from A\to\R$.
\end{enumerate}
\end{proposition}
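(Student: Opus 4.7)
The proof proceeds item by item, and most assertions are elementary consequences of standard inequalities; I sketch the main ideas without grinding the arithmetic.

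\textbf{For \ref{prop:p-variation-limits}}, I argue by contrapositive. Suppose $f$ fails to have, say, a right limit at some point $x_0\in A$. Then $\limsup_{x\to x_0^+}f(x)>\liminf_{x\to x_0^+}f(x)$, so there are $\delta>0$ and sequences $a_n, b_n \to x_0^+$ with $f(a_n)>L+\delta$ and $f(b_n)<L-\delta$ for a suitable constant $L$. Passing to subsequences, I can interlace the $a_n$'s and $b_n$'s into a strictly decreasing sequence $c_1>c_2>\cdots\to x_0$ along which $f$ alternates by at least $2\delta$ at consecutive points. Reversing the order, the partition $(c_N,c_{N-1},\dots,c_1)$ gives a $p$-variation sum of at least $(N-1)(2\delta)^p$, which is unbounded in $N$; hence $|f|_{V^p(A)}=\infty$. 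The case of left limits is symmetric.

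\textbf{For \ref{prop:p-variation-monotone}}, writing $a_i\coloneqq|f(x_i)-f(x_{i-1})|$ and $M\coloneqq(\sum_i a_i^p)^{1/p}$, each $a_i\leq M$, so $a_i^q=a_i^p\cdot a_i^{q-p}\leq M^{q-p}a_i^p$; summing and taking the $q$-th root gives $(\sum_i a_i^q)^{1/q}\leq M=(\sum_i a_i^p)^{1/p}$. Taking the supremum over partitions yields the inequality for $q<\infty$. For $q=\infty$, the bound $|f(x)-f(y)|\leq|f|_{V^p(A)}$ holds for any two points by taking the two-point partition. \textbf{Item \ref{prop:p-variation-seminorm}} follows from non-negativity, positive homogeneity (both obvious), and Minkowski's inequality on $\ell^p$ applied to the sequences $|f(x_i)-f(x_{i-1})|$ and $|g(x_i)-g(x_{i-1})|$; for $p=\infty$ the triangle inequality is direct. \textbf{Item \ref{prop:p-variation-TV}} is the definition.

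\textbf{For \ref{prop:p-variation-holder}}, let $\alpha=1/p$ and $H\coloneqq|f|_{C^{0,\alpha}(A)}$. For any partition $\bx$,
\begin{equation*}
\sum_i |f(x_i)-f(x_{i-1})|^p \leq H^p \sum_i (x_i-x_{i-1})^{p\alpha} = H^p \sum_i (x_i-x_{i-1}) \leq H^p|A|,
\end{equation*}
so $|f|_{V^p(A)}\leq H\,|A|^{1/p}$, which implies the stated bound whenever $|A|\leq 1$; the form $|A|\cdot H$ is a convenient looser estimate. \textbf{For \ref{prop:p-variation-product-rule}}, the pointwise identity
\begin{equation*}
f(x_i)g(x_i)-f(x_{i-1})g(x_{i-1}) = f(x_i)\bigl(g(x_i)-g(x_{i-1})\bigr) + \bigl(f(x_i)-f(x_{i-1})\bigr)g(x_{i-1})
\end{equation*}
yields $|(fg)(x_i)-(fg)(x_{i-1})|\leq \|f\|_\infty|g(x_i)-g(x_{i-1})|+\|g\|_\infty|f(x_i)-f(x_{i-1})|$; applying Minkowski's inequality on $\ell^p$ to the two resulting sequences and taking the supremum over partitions delivers the desired bound (and similarly for $p=\infty$).

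None of the items presents a real obstacle; the only non-routine point is the construction of the alternating subsequence in \ref{prop:p-variation-limits}, where one must be a bit careful to extract a monotone sequence of points that still exhibits oscillations of a fixed size.
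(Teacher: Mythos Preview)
The paper does not prove this proposition; it simply introduces it as a list of ``immediate properties'' and moves on. Your sketch supplies exactly the standard arguments one would expect, and all six items are handled correctly in substance.

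There is one slip worth flagging in item \ref{prop:p-variation-holder}. Your computation correctly yields the sharp bound $|f|_{V^p(A)}\leq |A|^{1/p}\,|f|_{C^{0,1/p}(A)}$, but the subsequent remark is backwards: for $p\geq 1$ one has $|A|^{1/p}\leq |A|$ precisely when $|A|\geq 1$, not when $|A|\leq 1$. Thus the paper's stated form $|A|\cdot H$ follows from your estimate only for intervals of length at least one; for short intervals the inequality as printed in the paper is in fact not implied by (and is stronger than) the Hölder computation, and is most likely a typo for $|A|^{1/p}$. Your derivation is the correct one.
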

We extend this pointwise-dependent concept to equivalence classes $f\in L^\infty(A)$ by setting $|f|_{V^p(A)}=\inf\bigl\{|g|_{V^p(A)} : g=f \text{ a.e.}\bigr\}$. From \ref{prop:p-variation-limits} above we see that if $f\in L^\infty(A)$ is such that $|f|_{V^p(A)}<\infty$ with $p<\infty$, then $f$ admits a right-continuous representation $\overline{f}$. In these cases we will not distinguish between the equivalence class $f$ and the function $\overline{f}$; it can be easily checked that $|f|_{V^p(A)}=|\overline{f}|_{V^p(A)}$.

By \ref{prop:p-variation-holder} above, $1/p$-Hölder continuity implies locally finite $p$-variation when $p\in[1,\infty)$.
One may intuitively expect something similar to hold for \textit{one-sided} $1/p$-Hölder regularity, as this regularity at least controls all increasing increments, which accounts for ``half the variation''.
However, a one-sided version of \ref{prop:p-variation-holder} holds true only for the trivial case $p=1$ (cf.~Remark~\ref{rem: theCounterExampleWithInfinitePVariation}). Instead we have the weaker, but useful, result for general $p\in(1,\infty)$:

\begin{lemma}\label{lem:oneSidedHolderContinuityImpliesFiniteWeakerVariation}
Let $A=[a_0,a_1]$ be a bounded and closed interval, $p\in(1,\infty)$, and $f\colon A\to \R$ be one-sided $1/p$-Hölder continuous, that is,
\begin{align*}
    C_p \coloneqq \sup_{a_0\leq x<y\leq a_1}\frac{f(y)-f(x)}{(y-x)^{1/p}}<\infty.
\end{align*}
Then the $q$-variation of $f$ is finite for all $q\in(p,\infty]$. Specifically, we have bounds
\begin{subequations}
\begin{empheq}[left={\empheqlbrace}]{alignat=1}
     |f|_{V^q(A)} &\leq c_1|f|_{V^\infty(A)} + c_2 C_p(a_1-a_0)^{1/p}  \qquad\text{for } q\in(p,\infty), \label{eq: explicitBoundOnTheQVariation} \\
    |f|_{V^\infty(A)} &\leq f(a_0)-f(a_1)+2C_p(a_1-a_0)^{1/p} \label{eq: explicitBoundOnTheInftyVariation}
\end{empheq}
\end{subequations}
for coefficients $c_1=c_1(p,q)$ and $c_2=c_2(p,q)$ depending only on $p$ and $q$.
\end{lemma}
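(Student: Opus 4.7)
My approach is to prove the two bounds separately. I will prove \eqref{eq: explicitBoundOnTheInftyVariation} first and then use it (via the shorthand $m := |f|_{V^\infty(A)}$) in the proof of \eqref{eq: explicitBoundOnTheQVariation}.

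For \eqref{eq: explicitBoundOnTheInftyVariation}, given $x < y$ in $A$, the one-sided Hölder hypothesis gives $f(y) - f(x) \leq C_p(y-x)^{1/p} \leq C_p(a_1-a_0)^{1/p}$ directly, while writing
\[
f(x) - f(y) = [f(x) - f(a_0)] + [f(a_0) - f(a_1)] + [f(a_1) - f(y)]
\]
and bounding the first and third brackets by $C_p(a_1-a_0)^{1/p}$ each (both are positive-direction increments) yields $f(x) - f(y) \leq f(a_0) - f(a_1) + 2C_p(a_1-a_0)^{1/p}$. The one-sided Hölder applied to $(a_0, a_1)$ forces $f(a_0) - f(a_1) \geq -C_p(a_1-a_0)^{1/p}$, so the right-hand side of \eqref{eq: explicitBoundOnTheInftyVariation} majorises both bounds.

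For \eqref{eq: explicitBoundOnTheQVariation}, I would fix a partition $\bx = (x_0, \ldots, x_N)$ of $A$ and split $\{1, \ldots, N\} = I_+ \sqcup I_-$ by the sign of the increments. The positive contribution is immediate from $q/p > 1$:
\[
\sum_{i \in I_+}(f(x_i) - f(x_{i-1}))^q \leq C_p^q \sum_i (x_i - x_{i-1})^{q/p} \leq C_p^q (a_1-a_0)^{q/p}.
\]
For the negative contribution $\sum_{i \in I_-} d_i^q$, with $d_i := f(x_{i-1}) - f(x_i) > 0$, I would interpolate $d_i^q \leq m^{q-p} d_i^p$ using $d_i \leq m$, and aim to prove the key estimate $\sum_{i \in I_-} d_i^p \leq c(p)\, C_p^p (a_1-a_0)$. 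The idea is to associate to each $i \in I_-$ an ``excursion interval'' $J_i = (\beta_i, x_{i-1}]$ with $\beta_i := \sup\{y < x_{i-1} : f(y) \leq f(x_i)\}$; on $J_i$ the function rises from $f(x_i)$ to $f(x_{i-1})$, so the one-sided Hölder yields $d_i \leq C_p|J_i|^{1/p}$, i.e., $d_i^p \leq C_p^p|J_i|$.

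The principal obstacle is controlling $\sum_{i \in I_-} |J_i|$, since the $J_i$'s may overlap for nested excursions of $f$. I would first show the family $\{J_i\}_{i \in I_-}$ is \emph{laminar} (any two intervals are either disjoint or nested), by verifying that if $i_1 < i_2$ with $f(x_{i_1}) \leq f(x_{i_2})$ then $\beta_{i_2} \geq x_{i_1} > x_{i_1-1}$ (so $J_{i_1}$ and $J_{i_2}$ are disjoint), while the case $f(x_{i_1}) > f(x_{i_2})$ forbids partial overlap via a contradiction argument inside $J_{i_1}$. Within each parent interval, the children are mutually disjoint and the inter-sibling gaps must accommodate Hölder-bounded rises, yielding a recursion that closes to $\sum_{i \in I_-} d_i^p \leq c(p)\,C_p^p (a_1-a_0)$; the handful of ``new infimum'' drops (those without a valid $\beta_i$) are treated separately using the $V^\infty$ bound together with a counting argument. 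Combining the positive and negative contributions gives $|f|_{V^q(A)}^q \leq c'\bigl[C_p^q(a_1-a_0)^{q/p} + m^{q-p}C_p^p(a_1-a_0)\bigr]$, and the additive form \eqref{eq: explicitBoundOnTheQVariation} then follows from the weighted AM-GM inequality $m^{q-p}[C_p(a_1-a_0)^{1/p}]^p \leq \tfrac{q-p}{q}m^q + \tfrac{p}{q}[C_p(a_1-a_0)^{1/p}]^q$ after taking $q$-th roots.
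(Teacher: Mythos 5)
Your treatment of \eqref{eq: explicitBoundOnTheInftyVariation} is correct and essentially the same as the paper's, and so is the split of the partition into positive and negative increments with the direct bound on the positive part. The step $d_i^q\leq m^{q-p}d_i^p$ is also valid. However, the ``key estimate'' you aim for, namely
\[
\sum_{i\in I_-} d_i^p\ \leq\ c(p)\,C_p^p\,(a_1-a_0),
\]
is \emph{false}, so no argument (laminar-family recursion or otherwise) can establish it. Together with the trivially bounded positive part, this estimate would give $|f|_{V^p(A)}<\infty$; but Remark~\ref{rem: theCounterExampleWithInfinitePVariation} exhibits, for every $p\in(1,\infty)$, a one-sided $1/p$-H\"older function with $|f|_{V^p}=\infty$. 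The same obstruction is visible structurally: your single interpolation $d_i^q\leq m^{q-p}d_i^p$ applies the same factor $m^{q-p}$ to every decrement regardless of its size, and so does not exploit the fact that small decrements must be much smaller relative to $m$. This is exactly why the blow-up of the constants $c_1,c_2$ as $q\downarrow p$ (which the paper's proof exhibits and the remark says is unavoidable) cannot be reproduced by a size-uniform interpolation.

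The paper's proof of the negative part avoids this by a dyadic stratification in \emph{size}: it partitions $I^-$ into $I^-_n=\{i:\,2^{-(n+1)}<\ -\Delta_if/|f|_{V^\infty}\ \leq 2^{-n}\}$, controls the cardinality $\#I^-_n$ using the telescoping identity
\[
\sum_{j}\bigl(f(y_j)-f(z_j)\bigr)=\bigl(f(z_k)-f(y_1)\bigr)+\sum_{j\geq2}\bigl(f(y_j)-f(z_{j-1})\bigr)
\]
together with the one-sided H\"older bound on the forward increments $f(y_j)-f(z_{j-1})$, and then sums $\sum_n(\#I^-_n)\cdot(|f|_{V^\infty}2^{-n})^q$ as a geometric series. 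This geometric series converges precisely because $q>p$, which is where the restriction $q>p$ enters and why the constants degenerate as $q\downarrow p$. If you want to salvage your approach, you would need to replace the uniform interpolation and the alleged $\ell^p$ bound by a size-stratified count of this kind; the laminar structure of the excursion intervals $J_i$ alone cannot give a bound on $\sum|J_i|$ or $\sum d_i^p$ independent of the nesting depth.
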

\begin{proof}
Note first that $f$ is necessarily bounded since
\begin{align*}
    \sup_{x\in A}f(x)\leq f(a_0) + C_p(a_1-a_0)^{\frac{1}{p}},\qquad \inf_{x\in A}f(x)\geq f(a_1) - C_p(a_1-a_0)^{\frac{1}{p}}.
\end{align*}
In particular, $|f|_{V^\infty(A)}$ satisfies \eqref{eq: explicitBoundOnTheInftyVariation}.
For the remaining bound, fix $q\in(p,\infty)$, let $\bx=(x_0,\cdots,x_m)$ be a partition of $ A$, and denote $\Delta_i x = x_i-x_{i-1}$ and $\Delta_i f = f(x_i)-f(x_{i-1})$.
As the partition is arbitrary, it suffices to prove that $\|\Delta_{\subscriptcdot} f\|_{\ell^q}$ is bounded by an expression like that of the right-hand side of \eqref{eq: explicitBoundOnTheQVariation}.
For this, introduce the index sets $I^+ = \{i\in\{1,\dots,m\} : \Delta_i f\geq 0\}$ and $I^- = \{i\in\{1,\dots,m\} : \Delta_i f< 0\}$, and note that
\begin{equation}\label{eq: TheSplitSumUsingTheIndexSets}
    \Bigg(\sum_{i=1}^m |\Delta_i f|^q\Bigg)^{\frac{1}{q}} \leq \Bigg(\sum_{i\in I^+} (\Delta_i f)^q\Bigg)^{\frac{1}{q}} + \Bigg(\sum_{i\in I^{-}} (-\Delta_i f)^q\Bigg)^{\frac{1}{q}}.
\end{equation}
The first part on the right-hand side of \eqref{eq: TheSplitSumUsingTheIndexSets}, is easily bounded:
\begin{align*}
\bigg(\sum_{i\in I^+} (\Delta_i f)^q\bigg)^{\frac{1}{q}}
&\leq \bigg(\sum_{i\in I^+} (\Delta_i f)^p\bigg)^{\frac{1}{p}}
\leq C_p\bigg(\sum_{i\in I^+}^m (\Delta_i x)\bigg)^{\frac{1}{p}}\leq C_p(a_1-a_0)^\frac{1}{p}.
\end{align*}

Dealing with the last term in \eqref{eq: TheSplitSumUsingTheIndexSets} is less straight-forwards. Partition $I^-$ into subsets $I^-_0, I^-_1, \dots$, where
\[
I^-_n = \Big\{i\in I^- : \frac{1}{2^{n+1}}< \frac{-\Delta_i f}{|f|_{V^\infty}} \leq \frac{1}{2^{n}} \Big\}, \qquad n=0,1,\dots
\]
Fix $n$. To avoid sub-subindices, we relabel the points $x_{i-1},x_i$ for $i\in I_n^-$ as $y_1< z_1\leq y_2< z_2\leq\dots\leq y_k< z_k$ where $k\coloneqq\#I_n^-$; that is, for every $i\in I^-_n$ there is a corresponding $j\in\{1,\dots, k\}$ such that $\Delta_i f=f(z_j)-f(y_j)$. We will need to control $k$, which can be done using the identity
\begin{equation*}\label{eq: telescopingSum}
 L\coloneqq \sum_{j=1}^k \bigl(f(y_j)-f(z_j)\bigr) =   \big(f(z_k)-f(y_1)
\big) + \sum_{j=2}^k \bigl(f(y_j)-f(z_{j-1})\bigr)\eqqcolon R_1+R_2,
\end{equation*}
and the three bounds
\begin{equation*}
    \begin{split}
         L=\,\sum_{i\in I^-_n} -\Delta_i f\geq \frac{|f|_{V^{\infty}}}{2^{n+1}}k, \qquad R_1\leq |f|_{V^\infty},\quad \\ R_2\leq\, C_p\sum_{j=2}^k (y_j-z_{j-1})^{\frac{1}{p}}\leq C_p (a_1-a_0)^{\frac{1}{p}}k^{1-\frac{1}{p}},
    \end{split}
\end{equation*}
where the latter bound for $R_2$ follows by H\"older's inequality.
Combining the identity and the bounds, and multiplying by $2^{n+1}/|f|_{V^\infty}$, yields
\begin{align*}
    \#I_n^- = k\leq 2^{n+1}  + 2^{n}\Biggl(\underbrace{\frac{2C_p (a_1-a_0)^{\frac{1}{p}}}{|f|_{V^\infty}}}_{\eqqcolon\,M}\Biggr)k^{1-\frac{1}{p}}.
\end{align*}
By Young's inequality, $2^nMk^{1-\frac{1}{p}}\leq \frac{1}{p}2^{np}M^{p} + (1-\frac{1}{p})k$, and rearranging, we find that
\begin{align*}
     \#I_n^-\leq p2^{n+1}  + 2^{np}M^p.
\end{align*}
Hence,
\begin{align*}
     \sum_{i\in I^-_n} |\Delta_i f|^q\leq  \bigg(\sup_{i\in I^-_n}|\Delta_i f|^q \biggr)\cdot \big(\#I_n^-\big)\leq  |f|_{V^\infty}^q\bigg(\frac{2p}{2^{n(q-1)}}  + \frac{M^p}{2^{n(q-p)}}\bigg).
\end{align*}
This latter expression is summable over $n\in \{0,1,\dots\}$; we conclude that
\begin{align*}
\Bigg(\sum_{i\in I^-} |\Delta_i f|^q\Biggr)^{\frac{1}{q}}
&= \Bigg(\sum_{n=0}^\infty\sum_{i\in I^-_n} |\Delta_i f|^q \Biggr)^{\frac{1}{q}}\leq |f|_{V^\infty}\bigg(\frac{2p}{1-2^{1-q}}  + \frac{M^p}{1-2^{p-q}}\bigg)^{\frac{1}{q}}.
\end{align*}
Inserting this, and the bound for the sum over $I^+$, in \eqref{eq: TheSplitSumUsingTheIndexSets}, followed by some simplifications (exploiting the sub-additivity of $x\mapsto x^{\frac{1}{q}}$ and Young's inequality) we get an expression like the one from the right-hand side of \eqref{eq: explicitBoundOnTheQVariation}, and so we are done.
\end{proof}

\begin{remark}\label{rem: theCounterExampleWithInfinitePVariation}
    As can be seen from the previous proof, the coefficients $c_1$ and $c_2$ blow up as $q\downarrow p$.
    This is unavoidable: For every $p\in(1,\infty)$, there are functions $f\colon (0,1)\to \R$ that are one-sided $\frac{1}{p}$-Hölder continuous but with $|f|_{V^p}=\infty$.
    An example of such a function is $f(x)=\sum_{k=1}^\infty b_k(x)2^{-k/p}$, where $b_k(x)$ denotes the $k$'th digit in the binary expansion of $x\in(0,1)$.
    Since this example will play no role in the paper, we omit the (somewhat lengthy) proof of the aforementioned properties of $f$.
\end{remark}
\begin{corollary}[Finite variation of the velocity field]\label{cor: finitePVariationOfTheVelocityB}
    The velocity field $b$ has locally finite $q$-variation for all $q\in(1,\infty)$ in the following sense: With $\lambda$ and $\omega_b$ as in Assumptions~\ref{ass:osoc} and \ref{ass:oshc} we have
    \begin{equation*}
        |b(t)|_{V^q([-R,R])}\lesssim\big(1+\lambda(t)\big) \qquad\text{for all }R>0.
    \end{equation*}
The implicit constant depends only on $q\in(1,\infty)$, $R\in (0,\infty)$, $\|b\|_{L^\infty}$, and $\omega_b$.
\end{corollary}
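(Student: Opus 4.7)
The plan is to combine the one-sided Osgood bound from Assumption~\ref{ass:osoc} with the H\"older-type bound from Assumption~\ref{ass:oshc}, and then invoke Lemma~\ref{lem:oneSidedHolderContinuityImpliesFiniteWeakerVariation} to convert one-sided H\"older regularity into $q$-variation control.

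Fix $q\in(1,\infty)$ and $R>0$, and choose any $\alpha\in(0,1)$ with $1/\alpha<q$ (for instance $\alpha=\tfrac{1}{2}(1+1/q)$); set $p\coloneqq 1/\alpha\in(1,q)$. By Assumption~\ref{ass:oshc} the number $K_\alpha\coloneqq\sup_{h>0}\omega_b(h)/h^\alpha$ is finite, and by Assumption~\ref{ass:osoc} we therefore have, for a.e.~$t$,
\[
b(x,t)-b(y,t)\leq \lambda(t)\omega_b(x-y)\leq \lambda(t)K_\alpha(x-y)^{1/p}\qquad\text{for all }-R\leq y<x\leq R.
\]
Thus $b(\cdot,t)$ is one-sided $1/p$-H\"older continuous on $[-R,R]$ with constant $C_p=\lambda(t)K_\alpha$, which is exactly the hypothesis of Lemma~\ref{lem:oneSidedHolderContinuityImpliesFiniteWeakerVariation} on the interval $A=[-R,R]$.

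Applying~\eqref{eq: explicitBoundOnTheInftyVariation} first, and using $|b(\pm R,t)|\leq\|b\|_{L^\infty}$, yields
\[
|b(t)|_{V^\infty([-R,R])}\leq 2\|b\|_{L^\infty}+2\lambda(t)K_\alpha(2R)^{\alpha}.
\]
Inserting this into~\eqref{eq: explicitBoundOnTheQVariation} (with our chosen $p<q$) gives
\[
|b(t)|_{V^q([-R,R])}\leq c_1\bigl(2\|b\|_{L^\infty}+2\lambda(t)K_\alpha(2R)^{\alpha}\bigr)+c_2\lambda(t)K_\alpha(2R)^{\alpha},
\]
where $c_1,c_2$ depend only on $p,q$, hence only on $q$ after our choice of $\alpha$. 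The right-hand side is of the form $C(1+\lambda(t))$ with $C=C(q,R,\|b\|_{L^\infty},\omega_b)$, which is the claimed bound.

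The only substantive step is choosing $\alpha$ so that $1/\alpha<q$; this is possible precisely because $q>1$, matching the range in the statement. No serious obstacle is expected, since Lemma~\ref{lem:oneSidedHolderContinuityImpliesFiniteWeakerVariation} does all the heavy lifting; the corollary is essentially a repackaging of that lemma using the decomposition of the H\"older constant into a $t$-dependent factor $\lambda(t)$ and a $t$-independent factor $K_\alpha$ supplied by Assumption~\ref{ass:oshc}.
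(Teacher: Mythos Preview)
Your proof is correct and follows essentially the same approach as the paper: pick $p\in(1,q)$, combine Assumptions~\ref{ass:osoc} and~\ref{ass:oshc} to obtain a one-sided $1/p$-H\"older bound with constant proportional to $\lambda(t)$, and then apply Lemma~\ref{lem:oneSidedHolderContinuityImpliesFiniteWeakerVariation}. Your version simply spells out the constants and the use of both estimates \eqref{eq: explicitBoundOnTheQVariation}--\eqref{eq: explicitBoundOnTheInftyVariation} more explicitly than the paper does.
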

\begin{proof}
    For $q\in (1,\infty)$, we can pick $p\in(1,q)$ and use Assumptions~\ref{ass:osoc} and \ref{ass:oshc} to find a $C_p$ such that $b(x+h,t)-b(x,t)\leq C_p \lambda(t)h^{1/p}$,
    for all $h>0$ and $x\in\R$ and a.e.~$t>0$. We then get the desired bound from Lemma \ref{lem:oneSidedHolderContinuityImpliesFiniteWeakerVariation}.
\end{proof}

The following lemma will be used in the renormalization argument in Theorem~\ref{thm:renormalizability}.
\begin{lemma}\label{lem:l1-translation-error-estimate}
Let $A=(a_0,a_1)\subseteq\R$,  $p\in[1,\infty)$, $v\in V^p(A)$, and $|y|\leq \frac{a_1-a_0}{2}$. Then
\begin{equation}\label{eq:simple-translation-estimate}
\int_{a_0+|y|}^{a_1-|y|}|v(x+y)-v(x)|^p\,dx \leq  |y||v|_{V^p(A)}^p.
\end{equation}
Consequently, if $v\in V^p(\R)$, $w\in V^q(\R)$ with $1/\theta\coloneqq 1/p+1/q>1$ then
\begin{equation}\label{eq:double-translation-estimate}
\int_{\R} |v(x+\delta)-v(x)||w(x+\delta)-w(x)|\,dx \leq \delta |v|_{V_\delta^\infty}^{1-\theta}|w|_{V_\delta^\infty}^{1-\theta} |v|_{V^p}^\theta |w|_{V^q}^\theta.
\end{equation}
\end{lemma}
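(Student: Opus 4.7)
My plan is to prove \eqref{eq:simple-translation-estimate} directly by a Fubini-type averaging argument, and then deduce \eqref{eq:double-translation-estimate} from it by a pointwise interpolation followed by Hölder's inequality.

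For \eqref{eq:simple-translation-estimate}, I assume by symmetry that $y > 0$. The idea is to average the defining bound of $|v|_{V^p(A)}$ over a one-parameter family of equispaced shifted partitions. For each $t \in [0, y)$, the set $\{a_0 + t + ky : k \in \Z\} \cap [a_0, a_1]$ (augmented by $a_0$ and $a_1$) is a valid partition of $A$, so
\[
\sum_{k} |v(a_0 + t + (k+1)y) - v(a_0 + t + ky)|^p \leq |v|_{V^p(A)}^p,
\]
the sum running over those $k$ for which both endpoints lie in $[a_0, a_1]$. Integrating over $t \in [0, y)$ and performing the change of variables $x = a_0 + t + ky$ — which, as $(t, k)$ ranges over $[0, y) \times \Z$ subject to the admissibility constraint, is a measure-preserving bijection onto $\{x \in [a_0, a_1] : x + y \in [a_0, a_1]\} = [a_0, a_1 - y]$ — turns the sum–integral into $\int_{a_0}^{a_1 - y} |v(x+y) - v(x)|^p \, dx$, giving the bound (which is slightly stronger than the claim on the subinterval $[a_0 + y, a_1 - y]$).

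For \eqref{eq:double-translation-estimate}, the assumption $1/p + 1/q > 1$ forces $\theta \in (0, 1)$, so the pointwise interpolations
\[
|v(x+\delta) - v(x)| \leq |v|_{V^\infty_\delta}^{1 - \theta} |v(x+\delta) - v(x)|^\theta
\]
and its analogue for $w$ are valid. Multiplying these, integrating over $\R$, and then applying Hölder's inequality with the conjugate pair $(p/\theta, q/\theta)$ — conjugate precisely because $\theta/p + \theta/q = \theta(1/p + 1/q) = 1$ — reduces matters to bounding
\[
\Bigl(\int_\R |v(x+\delta)-v(x)|^p \, dx\Bigr)^{\theta/p} \Bigl(\int_\R |w(x+\delta)-w(x)|^q \, dx\Bigr)^{\theta/q}.
\]
Applying \eqref{eq:simple-translation-estimate} on $\R$ (the case $a_0 \to -\infty$, $a_1 \to \infty$) to each factor yields a combined power $\delta^{\theta/p + \theta/q} = \delta$ multiplied by $|v|_{V^p}^\theta |w|_{V^q}^\theta$, exactly as claimed.

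The only step requiring genuine care is the Fubini/change-of-variables argument in the first part — in particular, checking that the map $(t, k) \mapsto a_0 + t + ky$ is, under the admissibility constraint on $k$, a measure-preserving bijection onto $[a_0, a_1 - y]$. Once this is in place, both estimates follow by elementary manipulations; the interpolation step is routine.
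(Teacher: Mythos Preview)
Your proof is correct and follows essentially the same approach as the paper: the same Fubini/shifted-partition averaging for \eqref{eq:simple-translation-estimate}, and the same combination of H\"older with exponents $(p/\theta,q/\theta)$ and pointwise $V^\infty_\delta$-interpolation for \eqref{eq:double-translation-estimate}. The only cosmetic differences are your choice of base point $a_0$ (versus $0$) in the shifted partitions and the order in which you apply interpolation and H\"older (you interpolate first, the paper applies H\"older first); both orderings yield the identical bound.
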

\begin{proof}
Without loss of generality let $y> 0$ and assume $|a_0|,|a_1|<\infty$ (the unbounded cases follow by a limit argument). Write $x\in\R$ as $x=ky + z$ for $k\in\Z$ and $z\in[0,y)$, and let $k_0,k_1\in \Z$ be such that $(a_0+y,a_1-y)\subseteq (k_0 y, k_1 y)\subseteq (a_0,a_1)$. We get
\begin{equation*}
   \int_{a_0+y}^{a_1-y}|v(x+y)-v(x)|^p\,dx \leq \sum_{k=k_0}^{k_1}\int_0^y \big|v\big((k+1)y+z\big) - v\big(ky + z\big)|^p\,dz\leq y |v|_{V ^p(A)}^p
\end{equation*}
where the inequality follows from interchanging the sum and the integral. To prove \eqref{eq:double-translation-estimate}, we first note that $\frac{\theta}{p}+\frac{\theta}{q}=1$. By Hölder's inequality we then have
\begin{equation}\label{eq:midwayInProofToDoubleTransEstimate}
    \begin{split}
            &\int_{\R} |v(x+\delta)-v(x)||w(x+\delta)-w(x)|\,dx\\ &\qquad\leq \bigg(\int_{\R} |v(x+\delta)-v(x)|^{\frac{p}{\theta}}\, dx\bigg)^{\frac{\theta}{p}}\bigg(\int_{\R}|w(x+\delta)-w(x)|^{\frac{q}{\theta}}\,dx\bigg)^{\frac{\theta}{q}}.
    \end{split}
\end{equation}
Further, using \eqref{eq:simple-translation-estimate} we get
\begin{align*}
    \int_{\R} |v(x+\delta)-v(x)|^{\frac{p}{\theta}}\, dx\leq |v|_{V^\infty_\delta}^{\frac{p}{\theta}(1-\theta)}\int_{\R}|v(x+\delta)-v(x)|^p\,dx\leq \delta|v|_{V^\infty_\delta}^{\frac{p}{\theta}(1-\theta)}|v|_{V^p}^p.
\end{align*}
With a similar estimate for $w$, we get \eqref{eq:double-translation-estimate} from \eqref{eq:midwayInProofToDoubleTransEstimate}.
\end{proof}

\subsection{Riemann--Stieltjes integration}\label{sec:riemann-stieltjes}

In this section we review the theory of Riemann--Stieltjes integrals. As in the previous section, the classical reference is Young~\cite{Young1936}, while an exposition from the viewpoint of rough paths is found in~\cite[Chapter~6]{FriVic2010}.

\begin{definition}
Let $f,g\from A\to\R$ be given, where $A\subset \R$ is a bounded interval. The Riemann--Stieltjes integral
\begin{equation}\label{eq: riemannStieltesIntegrationOnDisplay}
\int_A f(x)\,d_xg(x)
\end{equation}
is given by the limit (whenever it exists) of the Riemann--Stieltjes sums
\begin{equation}\label{eq:riemann-stieltjes-sum}
I_{\bx,\by}(f,g) \coloneqq \sum_{k=1}^N f(y_k)\bigl(g(x_k)-g(x_{k-1})\bigr)
\end{equation}
as the mesh size $|\bx|$ tends to zero, where $\bx,\by\in\Part(A)$ are arbitrary partitions with $\by\prec\bx$ (cf.~Definition~\ref{def:partition}). When $A\subseteq\R$ is unbounded, the integral is defined as the $R\uparrow\infty$-limit when integrating over $(-R,R)\cap A$.
\end{definition}

Each of the results in the following theorem is either contained in, or easily obtained from, the classical reference \cite{Young1936}.

\begin{theorem}\label{thm:young-integral-properties}
Let $p,q\geq 1$ be real numbers such that $\frac{1}{p}+\frac{1}{q} > 1$, and consider an interval $A=(a_0,a_1)$. Let $f\in V^p(A)$ and $g\in V^q(A)$, and with either $f$ or $g$ continuous. We then have the following:
\begin{enumerate}[label=\textit{(\roman*)}]
\item\label{thm:young-integral-properties-well-defined}
Well-posedness of the integral: The integrals $\int_{A} f\,d_xg$ and $\int_{A} g\,d_xf$ are well-defined and bilinear in $(f,g)$.
\item\label{thm:young-integral-properties-integration-by-parts}
Integration by parts formula: We have
\[
\int_{A} f\,d_xg + \int_{A} g\,d_xf = f(a_1-)g(a_1-)-f(a_0+)g(a_0+)
\]
where $h(a_1-)$ and $h(a_0+)$ denotes the left and right limits of $h=f,g$ at $a_1$ and $a_0$, respectively.
\item \label{thm:young-integral-properties_product-rule}Product rule: If $h\in V^p(A)\cap V^q(A)$ then
\[
\int_{A} f\,d_x(gh) = \int_{A} fg\,d_xh + \int_{A} fh\,d_xg.
\]
\item \label{thm:young-integral-properties_size-control} Size control: For any $\theta\in[0,\theta_0)$, where $\theta_0\coloneqq q\bigl(\tfrac1p+\tfrac1q-1\bigr)\leq1$, we have
\[
\biggl|\int_{A} f\,d_xg\biggr| \leq \|f\|_{L^\infty} |g|_{V^\infty} + C_{p,q,\theta} |f|_{V^p} |g|_{V^q}^{1-\theta}|g|_{V^\infty}^{\theta}
\]
for some $C_{p,q,\theta}>0$.
\item\label{thm:young-integral-properties-conv-rate}
Convergence rate: Let $\bx,\by\in\Part(A)$ with $\by\prec\bx$, let $\delta\coloneqq |\bx|$, and let $\theta$ be as in \ref{thm:young-integral-properties_size-control}. Then the error in the Riemann--Stieltjes sum \eqref{eq:riemann-stieltjes-sum} can be bounded by
\[
\Biggl|\int_{A} f(x)\,d_xg(x) - \sum_{k=1}^N f(y_k)\bigl(g(x_k)-g(x_{k-1})\bigr) \Biggr| \leq
C_{p,q,\theta} |f|_{V^p} |g|_{V^q}^{1-\theta}|g|_{V^\infty_\delta}^\theta .
\]


\item \label{thm:young-integral-properties_pw-convergence}
Stability: Let $f_1,f_2,\ldots$ and $g_1,g_2,\ldots$ have uniformly bounded $p$- and $q$-variation, respectively. If $f_n\to f$ pointwise on a dense set in $A$ and $g_n\to g\in C(A)$ uniformly, then $\int_{A} f_n\,d_xg_n \to \int_{A} f\,d_xg$ as $n\to\infty$.
\end{enumerate}
\end{theorem}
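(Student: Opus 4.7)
The plan is to follow Young's~\cite{Young1936} original strategy: first establish the sharp convergence-rate estimate~\ref{thm:young-integral-properties-conv-rate}, from which well-posedness~\ref{thm:young-integral-properties-well-defined} and the size bound~\ref{thm:young-integral-properties_size-control} follow immediately, and then derive the remaining identities at the level of Riemann--Stieltjes sums and pass to the limit.

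The core of~\ref{thm:young-integral-properties-conv-rate} is a \emph{pigeonhole removal} argument. Given a partition $\bx=(x_0,\dots,x_N)$ and subordinate $\by\prec\bx$, removing a single point $x_k$ (and matching $\by$ accordingly) changes the Riemann--Stieltjes sum by exactly $[f(y_k)-f(y_{k+1})][g(x_k)-g(x_{k-1})]$. Since $\sum_k|f(y_k)-f(y_{k+1})|^p\leq |f|_{V^p}^p$ and $\sum_k|g(x_k)-g(x_{k-1})|^q\leq |g|_{V^q}^q$, the pigeonhole principle combined with H\"older's inequality produces some $k$ for which this product is bounded by a constant times $N^{-(1/p+1/q)}|f|_{V^p}|g|_{V^q}$. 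Iterating the removals and summing the resulting series, which converges precisely because $1/p+1/q>1$, bounds the gap between any Riemann--Stieltjes sum and its refinement by $C_{p,q}|f|_{V^p}|g|_{V^q}$. The interpolated factor $|g|_{V^\infty_\delta}^\theta$ is then obtained by noting that for any partition of resolution $\delta$ each $g$-increment satisfies $|g(x_k)-g(x_{k-1})|\leq |g|_{V^\infty_\delta}$, so one may trade a factor of $|g|_{V^q}^\theta$ for $|g|_{V^\infty_\delta}^\theta$ for any $\theta\in[0,\theta_0)$. This establishes~\ref{thm:young-integral-properties-conv-rate}, and~\ref{thm:young-integral-properties-well-defined} then follows as the Cauchy criterion, using that $|f|_{V^\infty_\delta}\to 0$ or $|g|_{V^\infty_\delta}\to 0$ as $\delta\to 0$ whenever $f$ or $g$ is continuous.

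The size bound~\ref{thm:young-integral-properties_size-control} follows by applying~\ref{thm:young-integral-properties-conv-rate} to the trivial partition $\bx=(a_0,a_1)$: the single Riemann--Stieltjes term is controlled by $\|f\|_{L^\infty}|g|_{V^\infty}$, and the error term has $|g|_{V^\infty_\delta}=|g|_{V^\infty}$ since $\delta=|A|$. The integration-by-parts formula~\ref{thm:young-integral-properties-integration-by-parts} follows from discrete Abel summation applied to finite Riemann--Stieltjes sums, passing to the limit via~\ref{thm:young-integral-properties-well-defined}. The product rule~\ref{thm:young-integral-properties_product-rule} is derived from the pointwise identity
\begin{equation*}
f(y_k)\bigl[g(x_k)h(x_k)-g(x_{k-1})h(x_{k-1})\bigr] = f(y_k)g(y_k)\bigl[h(x_k)-h(x_{k-1})\bigr] + f(y_k)h(y_k)\bigl[g(x_k)-g(x_{k-1})\bigr] + R_k,
\end{equation*}
whose remainder $R_k$ consists of cross terms of the form $f(y_k)(g(x_k)-g(y_k))(h(x_k)-h(x_{k-1}))$; the sum $\sum_k R_k$ vanishes in the limit $|\bx|\to 0$ by applying the Young--Loeve-type bound to these double increments. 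For~\ref{thm:young-integral-properties_pw-convergence}, I would use a diagonal argument: given $\eta>0$, first fix $\delta>0$ so that~\ref{thm:young-integral-properties-conv-rate} yields an error less than $\eta/3$ \emph{uniformly} in $n$ (possible since $|f_n|_{V^p}$, $|g_n|_{V^q}$ are uniformly bounded and $|g_n|_{V^\infty_\delta}\to |g|_{V^\infty_\delta}$ by uniform convergence); then choose a partition $\bx$ of resolution $<\delta$ with nodes lying in the dense set where $f_n\to f$ pointwise, so that the finite Riemann--Stieltjes sums converge by elementary means.

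The main obstacle is the interpolated version of~\ref{thm:young-integral-properties-conv-rate}: the basic Young--Loeve bound with right-hand side $C_{p,q}|f|_{V^p}|g|_{V^q}$ is a standard pigeonhole argument, but tracking the mesh-size dependence through the iterative removal, while keeping partial $q$-variations controlled at every stage, requires careful bookkeeping. Once this is in hand, the remaining properties follow essentially routinely.
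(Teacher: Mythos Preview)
Your proposal is correct and aligns with the paper's treatment: the paper does not actually prove this theorem but simply states that ``each of the results in the following theorem is either contained in, or easily obtained from, the classical reference~\cite{Young1936}''. Your sketch is a faithful outline of Young's original argument (pigeonhole removal to get the Young--Lo\`eve bound, then interpolation via $|g|_{V^{q'}}\leq |g|_{V^\infty_\delta}^{1-q/q'}|g|_{V^q}^{q/q'}$ for $q'\in(q,p/(p-1))$ to extract the $\theta$-dependence, followed by the routine derivations of \ref{thm:young-integral-properties-well-defined}--\ref{thm:young-integral-properties_product-rule} and the diagonal argument for \ref{thm:young-integral-properties_pw-convergence}), so there is nothing to correct.
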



We will also need to deal with iterated integrals, where the integral in one of the directions is a Riemann--Stieltjes integral:
\begin{lemma}\label{lem:iterated-integral}
Let $B\subset\R^m$ (for $m\in\N$) be measurable, and let $A\subseteq \R$ be an interval. Let further $f,g\from A\times B\to\R$ be measurable functions. Let $p,q\geq 1$ satisfy $1/p+1/q>1$, and assume that for a.e.~$y\in B$,
\begin{enumerate}[label=(\roman*)]
\item $|f(\cdot,y)|_{V^p(A)},|g(\cdot,y)|_{V^q(A)}<\infty$,
\item $g(\cdot,y)$ is continuous.
\end{enumerate}
Then the map $y \mapsto \int_A f(x,y)\,d_xg(x,y)$ is measurable. 
\end{lemma}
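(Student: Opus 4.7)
The plan is to realise the Riemann--Stieltjes integral as an a.e.\ pointwise limit of measurable functions. First I will treat the case of bounded $A$, and then reduce the unbounded case to this by the very definition of $\int_A$ as a limit over truncations $A\cap(-R,R)$.

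Assume $A=(a_0,a_1)$ is bounded. Fix a sequence of partitions $\bx_n\in\Part(A)$ with $|\bx_n|\to 0$, together with arbitrary companion partitions $\by_n\prec \bx_n$, writing $\bx_n=(x_{n,0},\dots,x_{n,N_n})$ and $\by_n=(y_{n,1},\dots,y_{n,N_n})$. Define
\[
I_n(y) \coloneqq \sum_{k=1}^{N_n} f(y_{n,k},y)\bigl(g(x_{n,k},y)-g(x_{n,k-1},y)\bigr).
\]
By joint measurability of $f$ and $g$, every slice $y\mapsto f(x_0,y)$, $y\mapsto g(x_0,y)$ (for fixed $x_0\in A$) is measurable on $B$, so each $I_n$ is measurable as a finite sum of products of measurable functions.

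Next I show $I_n(y)\to \int_A f(x,y)\,d_xg(x,y)$ for a.e.\ $y\in B$. Fix such a $y$ satisfying the hypotheses. Since $g(\cdot,y)$ is continuous on $A$ and has finite $q$-variation, it admits left and right limits at $a_0,a_1$ by Proposition~\ref{prop:p-variation}\ref{prop:p-variation-limits}, hence extends continuously to the compact interval $\overline{A}$ and is therefore uniformly continuous on $A$. Consequently $|g(\cdot,y)|_{V^\infty_\delta(A)}\to 0$ as $\delta\to 0$. Choosing any $\theta\in(0,\theta_0)$ in Theorem~\ref{thm:young-integral-properties}\ref{thm:young-integral-properties-conv-rate} gives
\[
\biggl|I_n(y) - \int_A f(x,y)\,d_xg(x,y)\biggr| \leq C_{p,q,\theta}\,|f(\cdot,y)|_{V^p(A)}\,|g(\cdot,y)|_{V^q(A)}^{1-\theta}\,|g(\cdot,y)|_{V^\infty_{|\bx_n|}(A)}^{\theta} \xrightarrow[n\to\infty]{} 0.
\]
The target function is thus a.e.\ the pointwise limit of the measurable functions $I_n$, so it is measurable.

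For unbounded $A$, let $A_R\coloneqq A\cap(-R,R)$. The previous step (applied to $f|_{A_R\times B}$ and $g|_{A_R\times B}$, noting that finite $p$- and $q$-variation on $A$ restrict to finite $p$- and $q$-variation on $A_R$) shows that $y\mapsto \int_{A_R} f(x,y)\,d_xg(x,y)$ is measurable for each $R>0$. Taking $R\to\infty$ along a countable sequence, the map $y\mapsto \int_A f(x,y)\,d_xg(x,y)$ is the a.e.\ pointwise limit of these measurable functions, and is therefore measurable. The main obstacle is the a.e.\ pointwise convergence of the Riemann--Stieltjes sums to the Young integral along one fixed sequence of partitions; this is handled cleanly by the quantitative estimate in Theorem~\ref{thm:young-integral-properties}\ref{thm:young-integral-properties-conv-rate}, together with the uniform continuity of $g(\cdot,y)$ on each bounded subinterval, which itself follows from continuity plus finite $q$-variation.
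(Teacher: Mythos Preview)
Your approach coincides with the paper's: both realise the integral as an a.e.\ pointwise limit of Riemann--Stieltjes sums, which are measurable in $y$, and then conclude. Your explicit invocation of the convergence rate in Theorem~\ref{thm:young-integral-properties}\ref{thm:young-integral-properties-conv-rate} is in fact a bit cleaner than the paper's bare appeal to convergence of the sums.

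There is, however, one subtlety you gloss over. You assert that ``by joint measurability of $f$ and $g$, every slice $y\mapsto f(x_0,y)$ is measurable on $B$.'' This is true if $f$ is measurable with respect to the \emph{product} $\sigma$-algebra, but for \emph{Lebesgue} measurability (the completion), slices are only guaranteed measurable for a.e.~$x_0$, not every $x_0$. Since you evaluate at specific partition points $y_{n,k}, x_{n,k}$, this matters. The fix is easy --- either choose the countably many partition points from the co-null set of good $x_0$, or (as the paper does) first replace $f(\cdot,y)$ by its right-continuous version, which differs from $f$ only on a null set in $A\times B$ because it coincides a.e.\ with $\lim_{\eps\downarrow 0}\eps^{-1}\int_0^\eps f(x+z,y)\,dz$; for this version every section $y\mapsto f(x,y)$ is the pointwise limit of the measurable maps $y\mapsto \eps^{-1}\int_0^\eps f(x+z,y)\,dz$, hence measurable for \emph{all} $x$. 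The paper takes the latter route.
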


\begin{proof}
As explained after Proposition \ref{prop:p-variation}, we can for a.e.~$y\in B$ identify $f(\cdot,y)$ with its right-continuous version. Note that this version, as a function in $(x,y)$, differs from the original one on at most a null-set in $A\times B$ since it coincides with $\lim_{\varepsilon\downarrow 0} \eps^{-1}\int_0^\varepsilon f(x+z,y)\, dz$ by the Lebesgue differentiation theorem on sections.
It then follows that for all $x\in A$, the $x$-sections $y\mapsto f(x,y)$ and $y\mapsto g(x,y)$ are measurable since they are the pointwise limits (as $\varepsilon\downarrow 0$) of the measurable functions $y\mapsto \eps^{-1}\int_0^\varepsilon f(x+z,y)\, dz$ and $y\mapsto \eps^{-1}\int_0^\varepsilon g(x+z,y)\, dz$. Hence, for every pair of partitions $\bz\prec\bx$, the corresponding Riemann--Stieltjes sum $I_{\bx,\bz}\bigl(f(\cdot,y),g(\cdot,y)\bigr)$ (cf.~\eqref{eq:riemann-stieltjes-sum})
is measurable in $y$. Therefore, being the pointwise a.e.~limit of measurable functions, the limit $\int_A f(x,y)\,d_xg(x,y)$ is $y$-measurable.
\end{proof}

\begin{lemma}[``Fubini's theorem'']\label{lem:fubini}
Let $A\subseteq \R$ be an interval, $B\subset\R^m$ (for $m\in\N$) be measurable and bounded, and let $p,q\geq 1$ satisfy $1/p+1/q>1$. Let $f\from A\times B\to\R$ be bounded and measurable and satisfy $\esssup_{y\in B} |f(\cdot,y)|_{V^p}<\infty$, and let $g\in C(A)\cap V^q(A)$.
Then the map $y \mapsto \int_{A} f(x,y)\,d_xg(x)$ is Lebesgue integrable, and
\[
\int_B\int_{A} f(x,y)\,d_xg(x)\,dy = \int_{A} \int_B f(x,y)\,dy\,d_xg(x).
\]
\end{lemma}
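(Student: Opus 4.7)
I would split the argument into (i) measurability and integrability of $I(y) \coloneqq \int_A f(\cdot,y)\,d_xg$, (ii) the Fubini identity for bounded $A$, and (iii) the extension to unbounded $A$ by truncation. Measurability of $I$ follows directly from Lemma~\ref{lem:iterated-integral}. The size control in Theorem~\ref{thm:young-integral-properties}\ref{thm:young-integral-properties_size-control} gives, for any admissible $\theta\in(0,\theta_0)$, the essentially uniform bound
\[
|I(y)| \leq \|f\|_\infty |g|_{V^\infty(A)} + C_{p,q,\theta}\,|f(\cdot,y)|_{V^p(A)}\,|g|_{V^q(A)}^{1-\theta}|g|_{V^\infty(A)}^\theta,
\]
so $I\in L^\infty(B)\subset L^1(B)$ since $B$ is bounded.

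For (ii), assume $A$ is bounded. Choose partitions $\bx^{(n)}\in\Part(A)$ with $|\bx^{(n)}|\to 0$ and tagged points $\by^{(n)}=(\eta^{(n)}_1,\dots,\eta^{(n)}_{N_n})\prec\bx^{(n)}$, and write $S_n(y)\coloneqq I_{\bx^{(n)},\by^{(n)}}(f(\cdot,y),g)$. For a.e.~$y\in B$ (namely those $y$ with $|f(\cdot,y)|_{V^p(A)}<\infty$), Theorem~\ref{thm:young-integral-properties}\ref{thm:young-integral-properties-conv-rate} gives $S_n(y)\to I(y)$, using that $g$ is uniformly continuous on the bounded interval $A$ so that $|g|_{V^\infty_{|\bx^{(n)}|}}\to 0$. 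The same bound controls $|S_n(y)|$ uniformly in $n$ and essentially in $y$, so bounded convergence yields $\int_B S_n(y)\,dy \to \int_B I(y)\,dy$. On the other hand, linearity of the Lebesgue integral gives
\[
\int_B S_n(y)\,dy = \sum_{k=1}^{N_n} F(\eta^{(n)}_k)\bigl(g(x^{(n)}_k)-g(x^{(n)}_{k-1})\bigr), \qquad F(x)\coloneqq \int_B f(x,y)\,dy,
\]
and Minkowski's integral inequality for $\ell^p$-sums yields
\[
\Bigl(\sum_k |F(x_k)-F(x_{k-1})|^p\Bigr)^{1/p} \leq \int_B |f(\cdot,y)|_{V^p(A)}\,dy \leq |B|\,\esssup_{y\in B}|f(\cdot,y)|_{V^p(A)},
\]
so $|F|_{V^p(A)}<\infty$. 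Hence $\int_A F\,d_xg$ exists by Theorem~\ref{thm:young-integral-properties}\ref{thm:young-integral-properties-well-defined}, and the above Riemann--Stieltjes sums converge to it by Theorem~\ref{thm:young-integral-properties}\ref{thm:young-integral-properties-conv-rate}. Equating the two limits gives the identity on bounded intervals.

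For (iii), apply (ii) on $A_R\coloneqq A\cap(-R,R)$ and let $R\to\infty$. The right-hand side tends to $\int_A F\,d_xg$ by the very definition of the Riemann--Stieltjes integral on unbounded intervals; on the left, the size-control bound (uniform in $R$ because $|g|_{V^q(A_R)}\leq|g|_{V^q(A)}$ and likewise for $V^\infty$) together with pointwise convergence of $\int_{A_R}f(\cdot,y)\,d_xg$ to $I(y)$ permit us to pass to the limit under the $dy$-integral by dominated convergence. The main obstacle is genuinely mild: the whole argument reduces Fubini for Riemann--Stieltjes to its trivial version for finite sums, with the key analytic inputs being the convergence rate in Theorem~\ref{thm:young-integral-properties}\ref{thm:young-integral-properties-conv-rate} and a uniform $L^\infty_y$-bound on the Riemann--Stieltjes sums, which is exactly where the hypothesis $\esssup_y|f(\cdot,y)|_{V^p}<\infty$ enters.
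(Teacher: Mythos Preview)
Your argument is correct and follows essentially the same route as the paper's proof: both approximate $\int_A f(\cdot,y)\,d_xg$ by Riemann--Stieltjes sums, invoke the convergence rate in Theorem~\ref{thm:young-integral-properties}\ref{thm:young-integral-properties-conv-rate} together with the size control \ref{thm:young-integral-properties_size-control} to justify dominated convergence in $y$, and then swap the finite sum with the Lebesgue integral over $B$. You are somewhat more explicit than the paper in two places---you use Minkowski's inequality to verify that $F=\int_B f(\cdot,y)\,dy$ lies in $V^p(A)$ (the paper simply asserts the final equality), and you treat the unbounded-$A$ case by truncation (the paper does not comment on this)---but these are elaborations of the same strategy rather than a different approach.
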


\begin{proof}
Denote $I(f(y),g)\coloneqq\int_{A} f(x,y)\,d_xg(x)$, and for a pair of partitions $\bz\prec\bx$, let $I_{\bx,\bz}(f(y),g)$ denote the Riemann--Stieltjes sum \eqref{eq:riemann-stieltjes-sum}.
Fix some $\theta\in(0,\theta_0)$, where $\theta_0\coloneqq q\bigl(\frac1p+\frac1q-1\bigr)$ and let $\delta\coloneqq|\bx|$. Then for a.e.~$y\in B$, Theorem~\ref{thm:young-integral-properties}~\ref{thm:young-integral-properties-conv-rate} yields
\[
\Bigl|I(f(y),g) - I_{\bx,\bz}(f(y),g)\Bigr| \leq
C_{p,q,\theta} |f(y)|_{V^p} |g|_{V^q}^{1-\theta} |g|_{V^\infty_\delta}^\theta,
\]
which vanishes as $\delta\to0$, since $g$ is uniformly continuous. By Theorem \ref{thm:young-integral-properties} \ref{thm:young-integral-properties_size-control}, we have
\[
\bigl|I(f(y),g)\bigr| \leq 2\|f\|_{L^\infty}\|g\|_{L^\infty} + C_{p,q,\theta} |f(y)|_{V^p} |g|_{V^q}^{1-\theta} |g|_{V^\infty_\delta}^\theta,
\]
which is uniformly bounded, by our assumptions on $f$ and $g$. Hence, $y\mapsto I(f(y),g)$ is the uniform limit of uniformly bounded functions, so Lebesgue's dominated convergence theorem yields
\begin{align*}
&\int_B\int_{A} f(x,y)\,d_xg(x)\,dy =
\int_B I(f(y),g)\,dy = \int_B \lim_{\substack{|\bx|\to0\\ \bz\prec\bx}}I_{\bx,\bz}(f(y),g)\,dy \\
&\quad= \lim_{\substack{|\bx|\to0\\ \bz\prec\bx}} \int_B I_{\bx,\bz}(f(y),g)\,dy
= \lim_{\substack{|\bx|\to0\\ \bz\prec\bx}} \sum_{k=1}^N\int_B f(z_k,y)\,dy\, \bigl(g(x_k)-g(x_{k-1})\bigr) \\
&\quad= \int_{A} \int_B f(x,y)\,dy\,d_xg(x).\qedhere
\end{align*}
\end{proof}

\section{$L^1$ estimates on Riemann--Stieltjes integrals}\label{sec:nonlinear-l1-estimate}

This section is devoted to developing bounds on the transport term in \eqref{eq: theWeakRiemannStieltjesFormOfTheEquation}.
As can be motivated by the discussion in Section \ref{sec: sketchOfTheUniquenessArgument}, and specifically equation \eqref{eq: introductionThePreGeneralizedGrönwallInequality}, we aim to bound a Riemann--Stieltjes integral $\int_{\R}\gamma \, d_x\beta$ (for some non-negative $\gamma$) in terms of $\|\gamma\|_{L^1(\R)}$. In Section~\ref{sec:uniqueness-and-stability} we will apply this to $\gamma= |u|(t)\phi(t)$ and $\beta=b(t)$.

The following is the main result of this section.

\begin{theorem}[Controlling the transport term with $p$-variation]\label{thm: controlOnIntegralWithPVariationandOnesidedRegularity}
    Let $p\in[1,\infty)$, let $\gamma\in V^p(\R)$ be non-negative and have support in an interval of length $L>0$, and let $\beta\in C^0(\R)$ admit a concave one-sided modulus of continuity $\omega$ satisfying Assumption \ref{ass:oshc}. Then
      \begin{equation}\label{eq: controlOnIntegralWithPVariationandOnesidedRegularity}
       \int_{\R}\gamma(x)\, d_x\beta(x)\leq |\gamma|_{V^p}L^{1-\frac{1}{p}}\omega^*\bigg(\frac{\|\gamma\|_{L^1}}{|\gamma|_{V^p}L^{1-\frac{1}{p}}}\bigg),
    \end{equation}
    where $\omega^*(h)\coloneqq h^{2-2p} \omega\bigl( h^{2p-1}\bigr) + c_ph$, and where $c_p\geq 0$ only depends on $p$.
\end{theorem}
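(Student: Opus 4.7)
My plan is to reduce to the bounded-variation case $p=1$ (established separately as Proposition~\ref{prop: controlOnIntegralWithBVandOnesidedRegularity}, which gives the cleaner estimate $\int \gamma\,d_x\beta \le |\gamma|_{V^1}\omega(\|\gamma\|_{L^1}/|\gamma|_{V^1})$) and then extend via the monotone Littlewood--Paley-type decomposition of Lemma~\ref{lem: monotoneDecomposition}. For the underlying BV step itself, I would use the layer-cake identity $\gamma(x) = \int_0^\infty \mathbf{1}_{\{\gamma > s\}}(x)\,ds$, swap $ds$ with $d_x\beta$ by a Fubini-type argument (in the spirit of Lemma~\ref{lem:fubini}), decompose the open level set $\{\gamma > s\}$ into its connected components $\sqcup_i (a_i(s), b_i(s))$, apply the one-sided modulus $\beta(b_i(s)) - \beta(a_i(s)) \le \omega(b_i(s) - a_i(s))$, and invoke concavity of $\omega$ (Jensen) to bound the sum over $i$ by $N(s)\omega(L(s)/N(s))$, where $N(s)$ and $L(s)$ are the number and total length of components; integrating in $s$ then produces the BV bound.

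For general $p \in [1,\infty)$, I would apply Lemma~\ref{lem: monotoneDecomposition} to obtain a pointwise decomposition $\gamma = \sum_n \gamma_n$ into non-negative BV functions. Applying the BV bound of the previous step to each $\gamma_n$, then summing and using Jensen's inequality with weights $|\gamma_n|_{V^1}/T$ (where $T := \sum_n |\gamma_n|_{V^1}$) and the identity $\sum_n \|\gamma_n\|_{L^1} = \|\gamma\|_{L^1}$, I obtain
\[
\int \gamma\,d_x\beta \;=\; \sum_n \int \gamma_n\,d_x\beta \;\le\; \sum_n |\gamma_n|_{V^1}\,\omega\!\left(\tfrac{\|\gamma_n\|_{L^1}}{|\gamma_n|_{V^1}}\right) \;\le\; T\,\omega(\|\gamma\|_{L^1}/T).
\]

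Since $T \mapsto T\omega(\|\gamma\|_{L^1}/T)$ is non-decreasing in $T$ (because $\omega(s)/s$ is non-increasing by concavity), I would tune the decomposition scale to minimize $T$. A careful balance produces $T \sim |\gamma|_{V^p}^{2p-1}L^{(2p-1)(1-1/p)}\|\gamma\|_{L^1}^{2-2p}$; setting $A := |\gamma|_{V^p}L^{1-1/p}$ and $h := \|\gamma\|_{L^1}/A$, this is exactly $T = A h^{2-2p}$, whence $\|\gamma\|_{L^1}/T = h^{2p-1}$ and the main term becomes $T\omega(\|\gamma\|_{L^1}/T) = A h^{2-2p}\omega(h^{2p-1})$, matching the leading summand of $A\omega^*(h)$. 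The linear correction $c_p h$ in $\omega^*$ absorbs lower-order residuals that arise when the optimization is not perfectly sharp (e.g.\ for $h$ of order one, where the dyadic balancing has a controlled ``loss'').

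\textbf{Main obstacle.} The decisive step is Lemma~\ref{lem: monotoneDecomposition}. A natural candidate is the dyadic amplitude decomposition $\gamma_n := (\gamma \wedge M_n) - (\gamma \wedge M_{n+1})$ with $M_n = \|\gamma\|_\infty 2^{-n}$, which yields non-negative pieces with $\|\gamma_n\|_\infty \le M_{n+1}$ and support on the level set $\{\gamma > M_{n+1}\}$. The $V^1$-content of each slice is controlled by $M_{n+1}$ times the number of excursions of $\gamma$ above level $M_{n+1}$, and this count can in turn be bounded by $|\gamma|_{V^p}^p/M_n^p$ (every sufficiently deep excursion contributes $\gtrsim M_n^p$ to $|\gamma|_{V^p}^p$). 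Carefully summing these dyadic $V^1$-contributions --- possibly with a truncation of the tail to keep $T$ finite when $|\gamma|_{V^1}=\infty$ --- in order to obtain the precise scaling $|\gamma|_{V^p}^{2p-1}\|\gamma\|_{L^1}^{2-2p}$ in $T$ is the technical heart of the proof, and is where the interplay between the $V^p$-norm of $\gamma$ and the $L^1$-mass must be extracted with the right exponents.
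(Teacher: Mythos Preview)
Your global Jensen step cannot work. For \emph{any} decomposition $\gamma=\sum_n\gamma_n$ into $BV$ pieces, the triangle inequality for the seminorm $|\cdot|_{V^1}$ gives $T=\sum_n|\gamma_n|_{V^1}\geq|\gamma|_{V^1}$, and the right-hand side is infinite whenever $\gamma\in V^p\setminus V^1$. So there is no tuning of the decomposition scale that makes $T$ finite, let alone equal to your target $Ah^{2-2p}$. Your amplitude slices are no exception (indeed, since each $f_n$ is non-decreasing, $\sum_n|\Delta_i\gamma_n|=|\Delta_i\gamma|$ for every increment). Separately, the claimed bound $|\gamma_n|_{V^1}\lesssim M_{n+1}\times(\text{excursion count})$ is false: on a single connected component of $\{\gamma>M_{n+1}\}$, $\gamma$ may oscillate arbitrarily often within the band $[M_{n+1},M_n]$, and each such oscillation adds to $|\gamma_n|_{V^1}$.

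The paper does use Lemma~\ref{lem: monotoneDecomposition} (a \emph{spatial} decomposition via lower box envelopes at dyadic scales $r_0/2^k$), but not through a global Jensen. It isolates $\gamma_0$ from the tail $\sum_{k\geq1}\gamma_k$. A single application of the $BV$ bound to $\gamma_0$ gives the main term, once $r_0$ is chosen so that the upper bound $(L/r_0)^{1-1/p}|\gamma|_{V^p}$ on $|\gamma_0|_{TV}$ equals your $Ah^{2-2p}$; this is where your computation of ``$T$'' is on the right track, but it is the $TV$ of one coarse piece, not of the full sum. The tail is handled by summing the individual $BV$ bounds $|\gamma_k|_{TV}\,\omega\bigl(\|\gamma_k\|_{L^1}/|\gamma_k|_{TV}\bigr)$ directly: Lemma~\ref{lem: monotoneDecomposition} makes $\|\gamma_k\|_{L^1}$ decay like $2^{-k/p}$ while $|\gamma_k|_{TV}$ grows like $2^{k(1-1/p)}$, so the argument of $\omega$ shrinks like $r_0 2^{-k}$. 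This series diverges for a generic Osgood modulus; it converges only because Assumption~\ref{ass:oshc} permits $\omega(s)\leq C_p s^{1-1/(2p)}$, tipping the geometric balance. The resulting tail scales like $r_0^{1/(2p)}$, and the choice $r_0=h^{2p}$ turns it into the linear term $c_pAh$. You never invoke Assumption~\ref{ass:oshc}, but it is exactly what produces---and is needed for---the $c_ph$ correction; it is not a residual absorbable for free.
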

\begin{remark}
Note that $\omega^*$ need not be concave nor strictly increasing (this is a consequence of not truly optimizing in $r_0$ in the proof). Still, observe that $C\mapsto C\omega^*(h/C)$ is non-decreasing (by concavity of $\omega$) and that $\omega^*$ is an Osgood modulus (i.e., satisfies \eqref{eq: OsgoodCriterion}) whenever $\omega$ is. Indeed,
\[
\int_0^\eps\frac{dh}{\omega^*(h)}=\int_0^\eps \frac{h^{2-2p}dh}{\omega(h^{2p-1}) + c_ph^{2p-1}}
= \frac{1}{2p-1}\int_0^{\eps^{2p-1}}\frac{dz}{\omega(z)} = \infty
\]
for any $\eps>0$.
 \end{remark}

The rest of this section is devoted to proving Theorem~\ref{thm: controlOnIntegralWithPVariationandOnesidedRegularity}.
The special case $p=1$ --- namely, when $\gamma$ has bounded total variation --- turns out to be rather elementary, and is proved below in Section~\ref{sec:controlOnIntegral-TV}.
The improvement from $p=1$ to $p>1$ requires new techniques. In Section~\ref{sec: lowerBoxEnvelope} we show that any non-negative $\gamma\in V^p(\R)$ can be decomposed into a sum of non-negative functions of bounded variation (Lemma~\ref{lem: monotoneDecomposition}) in a controlled manner.
In Section~\ref{sec:controlling-transport-term} we apply the aforementioned decomposition to generalise to $p>1$.

\subsection{The case $p=1$}\label{sec:controlOnIntegral-TV}
In this section we prove the particular case of Theorem \ref{thm: controlOnIntegralWithPVariationandOnesidedRegularity} with $p=1$:

\begin{proposition}
\label{prop: controlOnIntegralWithBVandOnesidedRegularity}
    Let $0\leq \gamma\in BV(\R)$ and let $\beta\in C^0(\R)$ admit a concave one-sided modulus of continuity $\omega$. Then
    \begin{equation}\label{eq: controlOnIntegralWithBVandOnesidedRegularity}
       \int_{\R}\gamma\, d_x\beta\leq |\gamma|_{TV}\omega\bigg(\frac{\|\gamma\|_{L^1}}{|\gamma|_{TV}}\bigg).
    \end{equation}
\end{proposition}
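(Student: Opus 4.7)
The plan is to use a layer-cake decomposition together with two applications of Jensen's inequality. Since $\gamma$ is of bounded variation and vanishes at infinity (being non-negative and in $L^1 \cap BV$), the integration-by-parts formula of Theorem~\ref{thm:young-integral-properties}~\ref{thm:young-integral-properties-integration-by-parts} reduces the problem to a Lebesgue--Stieltjes integral,
\begin{equation*}
\int_\R \gamma\,d_x\beta = -\int_\R \beta\,d\gamma,
\end{equation*}
where $d\gamma$ is the signed Radon measure associated with the BV function $\gamma$. Using the layer-cake representation $\gamma(x)=\int_0^\infty \mathbf{1}_{\{\gamma>s\}}(x)\,ds$ and Fubini then gives
\begin{equation*}
\int_\R \gamma\,d_x\beta = \int_0^\infty \sum_{i=1}^{N(s)}\bigl(\beta(b_i(s))-\beta(a_i(s))\bigr)\,ds,
\end{equation*}
where, for a.e.\ $s>0$, the open level set $E_s\coloneqq\{\gamma>s\}$ decomposes into its $N(s)<\infty$ connected components $(a_i(s),b_i(s))$; here I take the lower semicontinuous representative of $\gamma$ so that the sets $E_s$ are open.

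Next I apply the one-sided modulus of continuity, $\beta(b_i(s))-\beta(a_i(s))\leq\omega(b_i(s)-a_i(s))$, and use concavity of $\omega$ (Jensen's inequality with uniform weights on the $N(s)$ summands) to obtain
\begin{equation*}
\sum_{i=1}^{N(s)}\omega(b_i(s)-a_i(s)) \leq N(s)\,\omega\!\biggl(\frac{|E_s|}{N(s)}\biggr).
\end{equation*}
The coarea-type identity $|\gamma|_{TV}=2\int_0^\infty N(s)\,ds$ --- valid because each connected component of $E_s$ contributes exactly two atoms to $d\mathbf{1}_{E_s}$ --- ensures that $d\mu(s)\coloneqq \tfrac{2}{|\gamma|_{TV}}N(s)\,ds$ is a probability measure on $(0,\infty)$. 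A second application of Jensen's inequality, combined with the identity $\int_0^\infty |E_s|\,ds=\|\gamma\|_{L^1}$, then yields
\begin{equation*}
\int_\R \gamma\,d_x\beta \leq \tfrac{|\gamma|_{TV}}{2}\,\omega\!\biggl(\tfrac{2\|\gamma\|_{L^1}}{|\gamma|_{TV}}\biggr).
\end{equation*}
Finally, concavity of $\omega$ together with $\omega(0)\geq 0$ gives $\omega(2a)\leq 2\omega(a)$, producing the stated bound.

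I expect the main technical obstacle to be justifying the layer-cake reduction rigorously: since $\beta$ is only continuous, the symbol $d\beta$ is not \emph{a priori} a Borel measure and Fubini does not apply directly to the Riemann--Stieltjes integral, so the passage to the Lebesgue--Stieltjes integral $-\int\beta\,d\gamma$ via integration by parts is what makes the layer-cake manipulation legitimate. A minor auxiliary point is the selection of a lower semicontinuous representative of $\gamma$ so that $N(s)$ and $|E_s|$ behave well for a.e.\ $s$; the finiteness of $N(s)$ for a.e.\ $s$ is automatic from $|\gamma|_{TV}<\infty$ and the coarea identity.
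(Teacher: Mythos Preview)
Your proposal is correct and follows essentially the same approach as the paper: layer-cake decomposition of $\gamma$, the coarea identity $|\gamma|_{TV}=2\int_0^\infty N(s)\,ds$, the one-sided modulus bound on each component, two applications of Jensen's inequality, and concavity of $\omega$ to absorb the factor of~$2$. The only cosmetic difference is that the paper first reduces to $\beta\in C_c^\infty(\R)$ by density and stability of the Young integral (so that $\int_\R\gamma\,d_x\beta=\int_\R\gamma\beta'\,dx$ and the layer-cake step is an ordinary Fubini argument), whereas you reach the same point via integration by parts and the BV coarea formula for $d\gamma$.
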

\begin{remark}
    The right-hand side of \eqref{eq: controlOnIntegralWithBVandOnesidedRegularity} blows up as $|\gamma|_{TV}\to\infty$ unless $\omega$ is linear ($\beta$ is one-sided Lipschitz continuous), in which case the right-hand side reduces to $L\|\gamma\|_{L^1}$.
\end{remark}
\begin{proof}[Proof of Proposition \ref{prop: controlOnIntegralWithBVandOnesidedRegularity}]
    By a density argument, and stability of the Young integral (Theorem~\ref{thm:young-integral-properties}~\ref{thm:young-integral-properties_pw-convergence}), it suffices to consider $\beta\in C^\infty_c(\R)$. Define the superlevel set $E_\tau\coloneqq \{x : \gamma(x)>\tau\}$ and the two functions
\[
    g(\tau) \coloneqq \mathcal{H}^1\bigl(E_\tau\big), \qquad h(\tau)\coloneqq \mathcal{H}^0\bigl(\partial E_\tau\bigr),
\]
where $\mathcal{H}^1,\mathcal{H}^0$ denote the one- and zero-dimensional Hausdorff measure, respectively. Set $M\coloneqq \|\gamma\|_{L^\infty}$ and note that $g,h>0$ in $(0,M)$, while $g=h\equiv0$ in $(M,\infty)$. By the layer cake representation of $\gamma$ and the co-area formula we have the two identities
\begin{equation}\label{eq:epigraph-integrals}
\|\gamma\|_{L^1} = \int_0^M g(\tau)\,d\tau, \qquad |\gamma|_{TV} = \int_0^M h(\tau)\,d\tau.
\end{equation}
It follows that for almost every $\tau>0$ we have $h(\tau)<\infty$, so the set $E_\tau$ is a finite union of intervals. For these $\tau$, let $N_\tau$ denote the number of such intervals and let $x_k^\tau< y_k^\tau$, for $k=1,\dots, N_\tau$ be an indexing of the endpoints of these intervals; observe that
\begin{equation}\label{eq:epigraph-properties}
    h(\tau)=2N_\tau, \qquad g(\tau) = \sum_{k=1}^{N_\tau} y_k^\tau-x_k^\tau.
\end{equation}
We compute
\begin{align*}
    \int_{\R}\gamma(x)\, d_x\beta(x) &= \int_{\R}\gamma(x)\beta'(x)\, dx
    = \int_0^M\int_{\R}\mathbbm{1}(\gamma(x)>\tau)\beta'(x)\, dx\,d\tau\\
        &= \int_0^M\sum_{k=1}^{N_\tau}\big(\beta(y_k^\tau)-\beta(x_k^\tau)\big)\,d\tau
        \leq \int_0^M\sum_{k=1}^{N_\tau}\omega\bigl(y_k^\tau-x_k^\tau\bigr)\,d\tau.
\end{align*}
Dividing and multiplying this last sum by $N_\tau = h(\tau)/2$ followed by Jensen's inequality, we can bound it by
\begin{align*}
\int_{\R}\gamma(x)\, d_x\beta(x) &\leq
 \int_0^M \frac{h(\tau)}{2}\omega\Bigg(\sum_{k=1}^{N_\tau}\frac{y_k^\tau-x_k^\tau}{N_\tau}\Bigg)\,d\tau
  =\frac{1}{2}\int_0^M h(\tau)\omega\bigg(\frac{2g(\tau)}{h(\tau)}\bigg)\,d\tau \\
&\leq \int_0^M h(\tau)\omega\bigg(\frac{g(\tau)}{h(\tau)}\bigg)\,d\tau,
\end{align*}
the last inequality following from the fact that $\omega$ is concave with $\omega(0)=0$.
Dividing and multiplying this last integral by $|\gamma|_{TV}$ we can, by \eqref{eq:epigraph-integrals} and another application of Jensen's inequality, estimate further
\begin{align*}
\int_{\R}\gamma(x)\, d_x\beta(x) \leq |\gamma|_{TV}\omega\bigg(\int_0^M \frac{g(\tau)}{|\gamma|_{TV}}\,d\tau\bigg)
= |\gamma|_{TV}\omega\biggl(\frac{\|\gamma\|_{L^1}}{|\gamma|_{TV}}\biggr).
\end{align*}
Together, these calculations yield \eqref{eq: controlOnIntegralWithBVandOnesidedRegularity}.
\end{proof}

\subsection{Lower box envelope approximation}\label{sec: lowerBoxEnvelope}
This section is devoted to proving the following decomposition result.
\begin{lemma}[A monotone decomposition]\label{lem: monotoneDecomposition}
    Let $p\in[1,\infty)$ and let $\gamma\in V^p(\R)$ be non-negative and have support in an interval of length $L>0$. Let $r_0>0$ be a fixed parameter. Then there is a decomposition $\gamma=\sum_{k=0}^\infty\gamma_k$ such that
\begin{enumerate}[label=(\roman*)]
        \item the series converges at all continuity points of $\gamma$,
        \item each partial sum is of fixed $p$-variation, $\sup_{n}|\sum_{k=0}^n\gamma_k|_{V^p}\leq |\gamma|_{V^p}$,
        \item for each $k$ we have $\gamma_k\geq 0$ and the $L^1$-bound
\begin{subequations}\label{eq: estimatesOnLayers}
\begin{align}\label{eq: L1EstimatesOnLayers}
     \|\gamma_0\|_{L^1}\leq&\, \|\gamma\|_{L^1}, &  \|\gamma_k\|_{L^1}\leq&\, \frac{r_0^{\frac{1}{p}}L^{1-\frac{1}{p}}}{2^{\frac{k-2}{p}}}|\gamma|_{V^p},\quad k\in\N,
\end{align}
and the total variation bound
\begin{equation}\label{eq: TVEstimatesOnLayers}
    |\gamma_k|_{TV} \leq \bigg(\frac{2^{k}L}{r_0}\bigg)^{1-\frac{1}{p}}|\gamma|_{V^p},\quad k\in\N\cup\{0\}.
\end{equation}
\end{subequations}
\end{enumerate}
\end{lemma}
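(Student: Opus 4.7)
The plan is to construct the $\gamma_k$ as successive refinements of dyadic lower box envelopes of $\gamma$. Let $r_k \coloneqq r_0/2^k$ and let $\mathcal{P}_k$ denote the partition of $\R$ into half-open intervals of length $r_k$ indexed by $r_k\Z$; by construction $\mathcal{P}_{k+1}$ refines $\mathcal{P}_k$. For each $k$, define the piecewise-constant lower envelope
\begin{equation*}
S_k(x) \coloneqq \inf_{y \in I_k(x)} \gamma(y),
\end{equation*}
where $I_k(x) \in \mathcal{P}_k$ is the cell containing $x$. Since refining a partition can only increase the cell-wise infima, $0 \leq S_0 \leq S_1 \leq \cdots \leq \gamma$. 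Set $\gamma_0 \coloneqq S_0$ and $\gamma_k \coloneqq S_k - S_{k-1}$ for $k \geq 1$; then $\gamma_k \geq 0$ and $\sum_{k=0}^n \gamma_k = S_n$. Pointwise convergence at every continuity point of $\gamma$ is immediate since $\mathrm{diam}(I_n(x)) = r_n \to 0$.

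For the partial-sum $p$-variation bound it suffices to show $|S_n|_{V^p} \leq |\gamma|_{V^p}$. Given any $\bx \in \Part(\R)$ and any $\varepsilon > 0$, pick $y_i \in I_n(x_i)$ with $\gamma(y_i) \leq S_n(x_i) + \varepsilon$; because the cells of $\mathcal{P}_n$ are pairwise disjoint and occur left-to-right in the same order as the $x_i$'s, after collapsing consecutive $x_i$'s lying in the same cell the tuple $(y_i)$ is itself a partition, and the corresponding $p$-variation sum of $\gamma$ dominates that of $S_n$ up to $O(\varepsilon)$. Letting $\varepsilon \downarrow 0$ closes the estimate.

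For the $L^1$ bounds, $\|\gamma_0\|_{L^1} \leq \|\gamma\|_{L^1}$ is immediate from $\gamma_0 \leq \gamma$. For $k \geq 1$, on each cell $J \in \mathcal{P}_{k-1}$ with dyadic halves $J^-, J^+ \in \mathcal{P}_k$, a direct computation gives
\begin{equation*}
\int_J (S_k - S_{k-1}) \, dx = r_k \, d_J, \qquad d_J \coloneqq \bigl|\inf_{J^+}\gamma - \inf_{J^-}\gamma\bigr|.
\end{equation*}
Summing over the at most $\lesssim 2^{k-1}L/r_0$ cells of $\mathcal{P}_{k-1}$ meeting $\supp(\gamma)$ and applying Hölder's inequality with exponents $p$ and $p/(p-1)$ yields
\begin{equation*}
\|\gamma_k\|_{L^1} \lesssim r_k\bigl(2^{k-1}L/r_0\bigr)^{1-1/p}\Bigl(\sum_J d_J^p\Bigr)^{1/p},
\end{equation*}
and the inner sum is bounded by $|\gamma|_{V^p}^p$ by the same ordered near-infima argument (choose near-minimizers of $\gamma$ in each $J^-$ and $J^+$, which automatically land in order). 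Collecting powers of $2$ gives \eqref{eq: L1EstimatesOnLayers}.

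For the total variation bound, note that $\gamma_k$ is piecewise constant on the at most $N_k \lesssim 2^k L/r_0$ cells of $\mathcal{P}_k$ meeting $\supp(\gamma)$; combining the elementary Hölder-type bound $|f|_{TV} \leq N^{1-1/p}|f|_{V^p}$ for piecewise constant $f$ with $N$ pieces, with $|\gamma_k|_{V^p} \leq |S_k|_{V^p} + |S_{k-1}|_{V^p} \leq 2|\gamma|_{V^p}$, gives the required geometric decay \eqref{eq: TVEstimatesOnLayers}. The main technical obstacle throughout is justifying the ordered-infima argument when infima are not attained, which is handled by taking near-minimizers and passing to the limit; the rest is bookkeeping of powers of $2$.
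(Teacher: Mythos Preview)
Your construction is genuinely different from the paper's, and in several respects more elementary. The paper defines a \emph{sliding} lower box envelope
\[
\overline{\gamma}_r(x)=\sup_{z\in B_r(x)}\inf_{y\in B_r(z)}\gamma(y),
\]
sets $\gamma_0=\overline{\gamma}_{r_0}$ and $\gamma_k=\overline{\gamma}_{r_0/2^k}-\overline{\gamma}_{r_0/2^{k-1}}$, and then proves a package of structural facts (Lemma~3.6, Proposition~3.7) about $\overline{\gamma}_r$: its local maxima have width $\geq 2r$, the set $\{\gamma>\overline{\gamma}_r\}$ is open with $\overline{\gamma}_r$ locally constant on it, etc. These are what drive the sharp constants. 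Your grid-based envelopes $S_k$ do the same job with much less machinery, and your arguments for pointwise convergence, the $V^p$ bound on partial sums, and the $L^1$ bound (via the clean identity $\int_J(S_k-S_{k-1})=r_k d_J$) are correct and arguably nicer than the paper's.

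The one place where your argument falls short of the \emph{stated} constants is the TV bound. You bound $|\gamma_k|_{V^p}\leq |S_k|_{V^p}+|S_{k-1}|_{V^p}\leq 2|\gamma|_{V^p}$ and then pass to TV via H\"older over $N_k$ cells; together with the $+O(1)$ slack in the cell count $N_k$, this yields \eqref{eq: TVEstimatesOnLayers} only up to a fixed multiplicative factor. The paper avoids this loss by showing $|\gamma_k|_{TV}\leq |\overline{\gamma}_{r_0/2^k}|_{TV}$ directly: since $\overline{\gamma}_{r_0/2^{k-1}}$ is locally constant on the components of $\{\gamma_k\neq0\}$ and vanishes at their endpoints, the variation of $\gamma_k$ on each component equals that of $\overline{\gamma}_{r_0/2^k}$ there. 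Your grid envelopes do not share this ``vanishing at component boundaries'' property (the nonzero half of a $\mathcal{P}_{k-1}$ cell need not sit in its interior), which is why you pick up the extra factor. For the downstream application (Theorem~3.1) this is harmless --- the constants are absorbed into $c_p$ --- but as a proof of the lemma \emph{as stated} it leaves a small gap.
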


\begin{remark}
    The estimates \eqref{eq: L1EstimatesOnLayers} and \eqref{eq: TVEstimatesOnLayers} are --- up to some multiplicative constant --- the bounds one would obtain from a Littlewood--Paley decomposition of $\gamma$. The novelty in the previous result is the monotonicity: Each component $\gamma_k$ is non-negative, whereas the Littlewood--Paley blocks are of mixed sign. This detail is vital as we aim to apply Proposition \ref{prop: controlOnIntegralWithBVandOnesidedRegularity} to each such $\gamma_k$.
\end{remark}

The construction of our monotone decomposition relies on the following novel approximation procedure.

\begin{definition}[Lower box envelope]\label{def: definitionOfLowerBoxEnvelope}
   For $\gamma\colon \R\to [0,\infty)$ we define its lower box envelope of radius $r>0$ by
\begin{align}\label{eq: definitionOfLowerBoxEnvelope}
\overline{\gamma}_r(x)\coloneqq
\adjustlimits\sup_{z\in B_r(x)}\inf_{y\in B_r(z)}\gamma(y)
= \adjustlimits\sup_{|x_1|< r}\inf_{|x_2|<r}\gamma(x+x_1+x_2)
\end{align}
where $B_r(x) \coloneqq (x-r,x+r)$.
\end{definition}
The non-negativity assumption on $\gamma$ is merely for technical simplicity since the previous definition can then be interpreted as follows:
\begin{remark}
    The lower box envelope enjoys the following characterization. Let $X_{r}(\gamma)$ denote the set of box-functions of radius $r>0$ below $\gamma$, meaning $\Pi\in X_r(\gamma)$ if and only if there is a height $h\in[0,\infty)$ and centre $z\in\R$ such that
    \begin{equation*}
 \Pi(x)=\begin{cases}
            h &  \text{for }x\in B_r(z)\\
            0 & \text{for } x\notin B_r(z)
        \end{cases} \quad
        \text{and}\quad\Pi(x)\leq \gamma(x)\text{ for all }x\in\R.
    \end{equation*}
Note that $X_r(\gamma)$ is non-empty since it contains $0$. Note also that $\Pi\leq \gamma$ on $B_r(z)$ exactly when $h\leq \inf_{y\in B_r(z)}\gamma(y)$; this, together with the fact that $\Pi(x)=0$ whenever $z\notin B_r(x)$, implies that \eqref{eq: definitionOfLowerBoxEnvelope} is equivalent to
\begin{align}\label{eq: theGeometricCharacterizationOfTheLowerBoxEnvelope}
    \br(x)= \sup_{\Pi\in X_r(\gamma)}\Pi(x).
\end{align}
\end{remark}
We supplement this definition with a lemma listing some useful properties of this approximation.  We stress that throughout the section we use the set-theoretic definition of support,
\begin{align*}
    \supp(f)\coloneqq \{x: f(x)\neq 0\},
\end{align*}
that is, $\supp(f)$ need not be closed.

\begin{lemma}[Properties of the lower box envelope]\label{lem:lower-box-envelope}
    Let  $\gamma\colon\R\to [0,\infty)$ be lower semi-continuous and let $r>0$. Then
    \begin{enumerate}[label=\textit{(\roman*)}]
        \item\label{lem:lower-box-envelope-monotone}
        $0\,\leq \,\br\,\leq \,\overline{\gamma}_{r'}\,\leq \,\gamma$ whenever $0<r'<r$, and $\lim_{r\downarrow 0}\br=\gamma$ pointwise,
        \item\label{lem:lower-box-envelope-lsc}
        $\br$ is lower semi-continuous,
        \item\label{lem:lower-box-envelope-open-supp}
        $\supp(\gamma-\br)$ is open and $\br$ is locally constant on this set,
        \item\label{lem:lower-box-envelope-size-supp}
        the connected components of $\supp(\gamma-\br)$ are of width at most $2r$, and
        \item\label{lem:lower-box-envelope-maxima} the local maxima of $\br$ are of width at least $2r$, in the sense that for every triplet $x_0<x_1<x_2$ we have the implication
    \begin{align*}
        \br(x_0)<\br(x_1)>\br(x_2)\qquad\implies\qquad x_2-x_0 > 2r.
    \end{align*}
    \end{enumerate}
\end{lemma}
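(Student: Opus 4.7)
The plan is to establish the five claims in order, relying on the geometric characterization \eqref{eq: theGeometricCharacterizationOfTheLowerBoxEnvelope} and the lower semicontinuity of $\gamma$. Claims (i) and (ii) are routine: for (i), taking $z = x$ in the outer supremum gives $\br \leq \gamma$ since $x \in B_r(z)$ whenever $z \in B_r(x)$; monotonicity in $r$ follows because any box of radius $r$ below $\gamma$ containing $x$ admits a sub-box of radius $r' < r$ of the same height still containing $x$ (so $X_{r'}(\gamma)$ majorises $X_r(\gamma)$ pointwise); and pointwise convergence $\br \to \gamma$ as $r \downarrow 0$ uses lsc of $\gamma$ with the choice $z = x$. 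For (ii), if $\br(x_0) > c$ then a witness $z_0 \in B_r(x_0)$ with $\inf_{B_r(z_0)} \gamma > c$ shows $\br > c$ on the whole open ball $B_r(z_0) \ni x_0$, since $z_0 \in B_r(x)$ precisely when $x \in B_r(z_0)$.

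The core of the lemma is (iii). Fix $x_0 \in \supp(\gamma - \br)$ with $h_0 \coloneqq \br(x_0) < \gamma(x_0)$, and use lsc of $\gamma$ to choose $\delta_0, \eta > 0$ with $\gamma > h_0 + \eta$ on $(x_0 - \delta_0, x_0 + \delta_0)$. The key step is an upper bound $\br(x) \leq h_0$ for $x$ in a fixed neighborhood of $x_0$: if some nearby $x^*$ satisfied $\br(x^*) > h_0 + \epsilon$, a witness $z^* \in B_r(x^*)$ with $\inf_{B_r(z^*)} \gamma > h_0 + \epsilon$ either lies directly in $B_r(x_0)$ (immediately forcing $\br(x_0) > h_0$, contradiction) or lies within distance less than $\delta_0$ of $\partial B_r(x_0)$, in which case it can be shifted by some $\delta' < \delta_0$ to a $z \in B_r(x_0)$ so that $B_r(z)$ retains most of $B_r(z^*)$ (where $\gamma > h_0 + \epsilon$) while the new portion of length $\delta'$ lies in $(x_0 - \delta_0, x_0 + \delta_0)$ (where $\gamma > h_0 + \eta$); hence $\inf_{B_r(z)} \gamma > h_0$ and $\br(x_0) > h_0$, contradiction. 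The matching lower bound $\br \geq h_0$ on a neighborhood comes from a near-optimal witness $z_0 \in B_r(x_0)$ which remains a valid witness for $\br(x)$ at every $x \in B_r(z_0)$. Together these force $\br \equiv h_0$ on a neighborhood of $x_0$; combined with $\gamma > h_0$ there, the neighborhood lies in $\supp(\gamma - \br)$, giving both openness and local constancy.

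For (iv), if a connected component $(a, b)$ had $b - a > 2r$ then the midpoint $x^*$ would satisfy $\overline{B_r(x^*)} \subset (a, b)$, and by local constancy from (iii) extended along the component, $\br \equiv h_0$ on $(a,b)$, so $\gamma > h_0$ pointwise on the compact set $\overline{B_r(x^*)}$; lsc of $\gamma$ and compactness then give $\inf_{B_r(x^*)} \gamma > h_0$, contradicting $\br(x^*) = h_0$. For (v), given $\br(x_0) < \br(x_1) > \br(x_2)$ pick $\epsilon > 0$ with $\br(x_0), \br(x_2) < \br(x_1) - \epsilon$ and a witness $z \in B_r(x_1)$ with $\inf_{B_r(z)} \gamma > \br(x_1) - \epsilon$; were $x_0 \in B_r(z)$ the definition of $\br(x_0)$ would yield $\br(x_0) > \br(x_1) - \epsilon$, contradicting the choice of $\epsilon$, so $|z - x_0| \geq r$ and, combined with $z \in B_r(x_1)$, $z \geq x_0 + r$; symmetrically $z \leq x_2 - r$, yielding $x_2 - x_0 \geq 2r$ (the strict version in the lemma statement requires a bit more care regarding attainment of the sup). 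The main obstacle is the upper bound in (iii), where the shift argument demands careful coordination of three small scales (the lsc radius $\delta_0$, the size of the neighborhood of $x_0$, and the shift amount $\delta'$) so that the shifted box $B_r(z)$ remains inside the union of $B_r(z^*)$ and the lsc region around $x_0$.
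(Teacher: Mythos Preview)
Your overall approach matches the paper's closely: the shift argument for the upper bound in (iii) is the heart of both proofs, and your treatments of (i), (ii), and (v) are essentially the same (the paper also only obtains $x_2-x_0\geq 2r$, so you are right that the strict inequality needs an extra word). Your argument for (iv) via the midpoint is a clean contrapositive variant of the paper's ``every point of $I$ is within $r$ of $I^c$'' argument; both are fine.

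There is, however, a genuine gap in your argument for the \emph{lower} bound in (iii). A near-optimal witness $z_0\in B_r(x_0)$ only yields $\br>h_0-\epsilon$ on $B_r(z_0)$, and since $z_0$ depends on $\epsilon$, so does the neighborhood. Taking $\epsilon\to0$ you recover nothing more than lower semicontinuity of $\br$ at $x_0$, which you already have from (ii); you do \emph{not} obtain $\br\geq h_0$ on any fixed neighborhood. Consequently the conclusion ``$\br\equiv h_0$ on a neighborhood'' does not follow from the two bounds you state.

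The fix is to drop the separate lower-bound argument and instead reuse your upper bound symmetrically, which is exactly what the paper does. Once you know $\br\leq h_0$ on $B_{\delta_0}(x_0)$ and $\gamma>h_0$ there, the whole ball lies in $\supp(\gamma-\br)$. For any $y\in B_{\delta_0/2}(x_0)$ you then have $\gamma>\br(y)$ on $B_{\delta_0/2}(y)\subset B_{\delta_0}(x_0)$, so the \emph{same} shift argument applied at $y$ (in place of $x_0$) gives $\br\leq\br(y)$ on $B_{\delta_0/2}(y)$; in particular $h_0=\br(x_0)\leq\br(y)$. Combined with $\br(y)\leq h_0$ this forces $\br\equiv h_0$ on $B_{\delta_0/2}(x_0)$. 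Equivalently: the upper bound says every point of $\supp(\gamma-\br)$ is a local maximum of $\br$, and together with the lower semicontinuity from (ii) this forces $\br$ to be continuous and hence constant on each connected component.
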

\begin{proof}
We first prove the monotonicity property \ref{lem:lower-box-envelope-monotone}: From the characterization \eqref{eq: theGeometricCharacterizationOfTheLowerBoxEnvelope} it is clear that $0\leq \br\leq \gamma$ since each $\Pi$ satisfies $0\leq \Pi\leq \gamma$. Moreover, any box $\Pi\in X_r(\gamma)$, is the envelope of thinner boxes
\begin{equation*}
    \Pi(x)= \sup_{\pi\in X_{r'}(\Pi)}\pi(x)
\end{equation*}
for any $r'<r$, and since $\Pi\leq \gamma$ we also have $X_{r'}(\Pi)\subset X_{r'}(\gamma)$. It follows that $\overline{\gamma}_r\leq\overline{\gamma}_{r'}$.
The pointwise convergence as $r\downarrow 0$ is ensured by the lower semi-continuity of $\gamma$.

Next, to prove \ref{lem:lower-box-envelope-lsc} we note that each box $\Pi\in X_r(\gamma)$ is lower semi-continuous, and as this property is preserved when taking the supremum over such a family, \ref{lem:lower-box-envelope-lsc} is a consequence of the characterization \eqref{eq: theGeometricCharacterizationOfTheLowerBoxEnvelope}.

To prove \ref{lem:lower-box-envelope-open-supp}, suppose $x$ is such that $\gamma(x)> \br(x)$.
As $\gamma$ is lower semi-continuous there is some $\delta\in(0,r]$ such that $\inf_{y\in B_{\delta}(x)}\gamma(y)> \br(x)$. We aim to demonstrate that
\begin{align}\label{eq: elementsOfSuppAreNorSmallerThanTheirNeighbours}
    \br(y)\leq \br(x)\text{ for }y\in B_\delta(x),
\end{align}
as property \ref{lem:lower-box-envelope-open-supp} would then follow; the set $\supp(\gamma-\br)$ would immediately be open, and if $x_1,x_2\in \supp(\gamma-\br)$ belonged to the same connected component we would get $\br(x_1)\leq \br(x_2)\leq \br(x_1)$, so $\br$ would be constant on that component. We prove \eqref{eq: elementsOfSuppAreNorSmallerThanTheirNeighbours} by contradiction: Assume there is an $\tilde x\in B_\delta(x)$ and a box function $\Pi\in X_r(\gamma)$ such that $h\coloneqq\Pi(\tilde x)>\br(x)$. By possibly lowering the height $h$, we may also assume that $h<\inf_{y\in B_{\delta}(x)}\gamma(y)$. Clearly, $x\notin \supp(\Pi)$. However, the shifted box function
\begin{align*}
    \tilde{\Pi}(y)\coloneq\Pi\big(y + \tilde{x}-x\big),
\end{align*}
\textit{does} include $x$ in its support. Furthermore, $\tilde{\Pi}\in X_r(\gamma)$ because $\supp(\tilde\Pi)\setminus \supp (\Pi)\subset B_\delta(x)$ and $\tilde\Pi(y)\leq h<\gamma(y)$ for all $y\in B_{\delta}(x)$. Thus, we get the contradiction $\br(x)\geq \tilde{\Pi}(x)=h>\br(x)$, and so \eqref{eq: elementsOfSuppAreNorSmallerThanTheirNeighbours} follows, thus proving \ref{lem:lower-box-envelope-open-supp}.

For \ref{lem:lower-box-envelope-size-supp}, pick a connected component $I\subset \supp(\gamma-\br)$ and fix a point $x\in I$. By the definition \eqref{eq: definitionOfLowerBoxEnvelope} of $\br(x)$ and the lower semi-continuity of $\gamma$ we have
\begin{align*}
    \br(x)\geq \inf_{y\in B_r(x)}\gamma(x) = \gamma(y_0),
\end{align*}
for some $y_0\in B_r(x)$. As $\gamma>\br\equiv\br(x)$ on $I$, it follows that $y_0\notin I$, and so $\dist(x,I^c)\leq |x-y_0|\leq r$; as the same can be said for any point in $I$ we conclude that $|I|\leq 2r$.

As for \ref{lem:lower-box-envelope-maxima}, we can, by the definition of $\br(x_1)$, pick $z\in B_r(x_1)$ such that
\begin{align*}
    \br(x_1)\geq \inf_{y\in B_r(z)}\gamma(y)>\max(\br(x_0), \br(x_2)),
\end{align*}
meaning that $z\notin B_r(x_0)\cup B_r(x_2)$, and so $x_2-x_0 = x_2-z + z-x_0> 2r$.
\end{proof}

\begin{proposition}\label{prop: TVBoundandL1ConvergenceOfWidthApproximation}
        Let $\gamma\colon \R\to [0,\infty)$ be lower semi-continuous, have finite $p$-variation with $p\in[1,\infty)$, and be supported in some interval $A=[a,a+L]$. We then have the estimates
    \begin{align}\label{eq: qVariationBoundOnFMApproximation}
      |\br|_{V^q(A)}\leq&\, |\gamma|_{V^p(A)}\bigg(\frac{L}{r}\bigg)^{\frac{1}{q}-\frac{1}{p}},\\\label{eq: L1ControllOnFMApproximation}
      \|\gamma-\br\|_{L^q(A)}\leq&\, |\gamma|_{V^p(A)}(2r)^{\frac{1}{p}}L^{\frac{1}{q}-\frac{1}{p}},
    \end{align}
 for any $q\in[1,p]$ and $r>0$.
\end{proposition}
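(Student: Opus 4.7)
The plan is to prove both bounds by exploiting the structural decomposition provided by Lemma~\ref{lem:lower-box-envelope}. I would first write $\supp(\gamma-\br) = \bigsqcup_n I_n$ as an at-most countable disjoint union of open intervals $I_n = (\alpha_n,\beta_n)$ with $|I_n|\leq 2r$, on each of which $\br\equiv c_n$ is constant. Since $\br\leq\gamma$, one has $\gamma\geq c_n$ on $I_n$, while at the boundary points $\alpha_n,\beta_n \in A\setminus\supp(\gamma-\br)$ (where $\gamma=\br$), the lower semi-continuity of $\br$ combined with $\br\equiv c_n$ on the adjacent set $I_n$ forces $\gamma(\alpha_n),\gamma(\beta_n)\leq c_n$. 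Setting $\eta_n \coloneqq \sup_{I_n}\gamma - c_n$, a partition of $A$ containing, for each $n$, the triple $(\alpha_n,x_n^*,\beta_n)$ with $x_n^*\in I_n$ approximating $\sup_{I_n}\gamma$, yields after a standard limit
\[
\sum_n \eta_n^p \leq |\gamma|_{V^p(A)}^p.
\]

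For the $L^q$ bound~\eqref{eq: L1ControllOnFMApproximation}, I observe that on each $I_n$ we have $\gamma-\br = \gamma - c_n \in [0,\eta_n]$, so
\[
\|\gamma-\br\|_{L^q(A)}^q = \sum_n\int_{I_n}(\gamma-c_n)^q\,dx \leq \sum_n |I_n|\,\eta_n^q.
\]
For $q<p$ I would factorise $|I_n|\eta_n^q = |I_n|^{(p-q)/p}\cdot (|I_n|\eta_n^p)^{q/p}$ and apply Hölder's inequality with exponents $p/(p-q)$ and $p/q$:
\[
\sum_n |I_n|\eta_n^q \leq \Bigl(\sum_n|I_n|\Bigr)^{(p-q)/p}\Bigl(\sum_n|I_n|\eta_n^p\Bigr)^{q/p} \leq L^{(p-q)/p}(2r)^{q/p}|\gamma|_{V^p(A)}^q,
\]
using $\sum_n |I_n|\leq L$, $|I_n|\leq 2r$ and the bound on $\sum_n\eta_n^p$. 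Extracting $q$-th roots delivers~\eqref{eq: L1ControllOnFMApproximation}; the case $q=p$ is immediate from the same computation.

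For the $V^q$ bound~\eqref{eq: qVariationBoundOnFMApproximation}, I would lean on Lemma~\ref{lem:lower-box-envelope}\ref{lem:lower-box-envelope-maxima}: strict local maxima of $\br$ are pairwise separated by more than $2r$, so there are at most $M\leq L/(2r)+1$ of them in $A$. Between consecutive strict peaks $\br$ admits no further strict peak and is constant on each $I_n$, hence it descends (modulo plateaus) to a valley region and then ascends, giving at most $K\lesssim L/r$ monotone pieces with total changes $T_1,\ldots,T_K$. Super-additivity of $t\mapsto t^q$ on monotone increments yields $|\br|_{V^q(A)}^q \leq \sum_k T_k^q$. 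A witness argument — attaching to each extremum $x_k^{\mathrm{ext}}$ a point $y_k\in B_r(x_k^{\mathrm{ext}})$ at which $\gamma$ approximates $\br(x_k^{\mathrm{ext}})$ from above at peaks and from below at valleys — then produces an ordered family $y_0<y_1<\cdots$ in $A$ whose $\gamma$-increments dominate the $T_k$'s, so that $\sum_k T_k^p\leq |\gamma|_{V^p(A)}^p$. Hölder finishes the job:
\[
\sum_k T_k^q \leq K^{(p-q)/p}\Bigl(\sum_k T_k^p\Bigr)^{q/p}\lesssim (L/r)^{(p-q)/p}|\gamma|_{V^p(A)}^q,
\]
and taking $q$-th roots gives~\eqref{eq: qVariationBoundOnFMApproximation}. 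The principal obstacle is the witness step: the $y_k$ must be ordered so as to form a valid partition of $A$, which is non-trivial when a peak and an adjacent valley lie within $2r$ of each other. The decisive observation is that at a peak $x^p$ the upper witness lies inside a whole ball $B_r(z)$ on which $\gamma\geq \br(x^p)-\eps$, whereas any lower witness $y^v$ near an adjacent valley satisfies $\gamma(y^v)\leq \br(x^v)+\eps < \br(x^p)-\eps$; consequently $y^v\notin B_r(z)$, and exploiting this disjointness together with the $2r$-separation of peaks gives the required ordering.
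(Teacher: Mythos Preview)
Your approach is essentially the same as the paper's for both estimates, and your $L^q$ argument is nearly identical to theirs. For the $V^q$ bound, both you and the paper reduce to oscillating sequences, count peaks via Lemma~\ref{lem:lower-box-envelope}~\ref{lem:lower-box-envelope-maxima}, dominate the $\br$-increments by $\gamma$-increments, and finish with H\"older.

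The one substantive difference is the witness step, and here the paper's construction is markedly simpler than yours. Rather than seeking witnesses in $r$-balls and then wrestling with their ordering, the paper observes that for a valley point $x_{2k}$ one may simply replace it by $\tilde{x}_{2k}\coloneqq\inf I$, where $I$ is the connected component of $\supp(\gamma-\br)$ containing $x_{2k}$ (if $x_{2k}\notin\supp(\gamma-\br)$, no shift is needed). At $\tilde{x}_{2k}$ one has $\gamma(\tilde{x}_{2k})=\br(\tilde{x}_{2k})\leq\br(x_{2k})$ by lower semi-continuity of $\br$, while at peaks one uses $\gamma(x_{2k+1})\geq\br(x_{2k+1})$ with no shift at all. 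The required ordering $x_{2k-1}<\tilde{x}_{2k}$ is then automatic: $\br$ is constant on $I$ and $\br(x_{2k-1})>\br(x_{2k})$ forces $x_{2k-1}\notin I$, hence $x_{2k-1}\leq\inf I$. This completely sidesteps the ``principal obstacle'' you identify. Your ball-witness disjointness idea may well be salvageable, but as written it is only a sketch, and the case where adjacent valley witnesses both fall on the same side of the peak's ball $B_r(z)$ is not clearly excluded.

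One further point: the paper obtains the exact count $2n\leq L/r$ (by summing the gaps $x_{2k}-x_{2k-2}>2r$ over $k=1,\dots,n$ with $x_0=a$ and $x_{2n}=a+L$), with no implied constant. Your $M\leq L/(2r)+1$ and $K\lesssim L/r$ would need the same tightening to recover the stated inequality \eqref{eq: qVariationBoundOnFMApproximation} verbatim rather than up to a constant factor.
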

\begin{proof}
By lower semi-continuity, we have $\gamma(a)=0=\gamma(a+L)$, and thus the same is true for $\br$. Since $\br$ is non-negative, we may restrict our attention to finite sequences $a=x_0<x_1<\dots < x_{2n}=a+L$ such that
\begin{align}\label{eq: theOscilatingSequence}
    \br(x_0)<\br(x_1)>\br(x_2)<\dots > \br(x_{2n}),
\end{align}
when seeking to  estimate the $q$-variation of $\br$. Furthermore, for the even-numbered points $x_{2k}$ we shall assume that $\br(x_{2k})= \gamma(x_{2k})$. If this was not the case for some $x_{2k}$, then we could pick the (open) connected component $I\subset \supp (\gamma-\br)$ containing $x_{2k}$ (on which $\br$ is constant) and replace $x_{2k}$ by the endpoint $\tilde{x}_{2k}=\inf I$. Since $\tilde{x}_{2k}\notin \supp(\gamma-\br)$ we get $\gamma(\tilde{x}_{2k})=\br(\tilde{x}_{2k})$, and by lower semi-continuity we also have $\br(\tilde{x}_{2k})\leq \br(x_{2k})$; thus, we are only enlarging the oscillations in \eqref{eq: theOscilatingSequence}. Together with the trivial fact that $\br(x_{2k+1})\leq\gamma(x_{2k+1})$ we conclude that
\begin{align}\label{eq: oscillatingSequencesOfBWAreDominatedByThatOfB}
    \Delta_k\br\coloneqq |\br(x_{k})-\br(x_{{k-1}})|\leq |\gamma(x_{k})-\gamma(x_{{k-1}})|\eqqcolon \Delta_k\gamma
\end{align}
for all $k$. The number of summands can also be bounded: By Lemma \ref{lem:lower-box-envelope} \ref{lem:lower-box-envelope-size-supp} we have
\begin{align}\label{eq: boundOnNumberOfSummands}
    L  =\sum_{k=1}^{n}x_{2k}-x_{2k-2}\geq 2rn,\qquad \implies \qquad 2n\leq L/r.
\end{align}
 By \eqref{eq: oscillatingSequencesOfBWAreDominatedByThatOfB}, \eqref{eq: boundOnNumberOfSummands} and Hölder's inequality, we get
\begin{align*}
    \|\Delta_{\subscriptcdot} \br\|_{\ell^q}\leq \|\Delta_{\subscriptcdot} \gamma\|_{\ell^q}\leq  \|\Delta_{\subscriptcdot} \gamma\|_{\ell^p}(2n)^{\frac{1}{q}-\frac{1}{p}}\leq |\gamma|_{V^p}\bigg(\frac{L}{r}\bigg)^{\frac{1}{q}-\frac{1}{p}},
\end{align*}
and so the $q$-variation bound follows.

Next, we prove the $L^q$-difference bound \eqref{eq: L1ControllOnFMApproximation}: Pick $n$ connected components $(x_k,y_k)$ (for $k=1,\dots,n$) from $\supp(\gamma-\br)$ and assume, without loss of generality, that $y_k < x_{k+1}$ for $k=1,\dots,n-1$. Recall from Lemma \ref{lem:lower-box-envelope} \ref{lem:lower-box-envelope-open-supp} that $\br$ is constant on each component $(x_k,y_k)$. We estimate
\begin{align}\label{eq: aBeginningOfTheL1Control}
    \sum_{k=1}^n\int_{x_k}^{y_k}|\gamma(z)-\br(z)|^q\,dz\leq \sum_{k=1}^n(y_k-x_k)\sup_{z\in(x_k,y_k)}\big(\gamma(z)-\br(z)\big)^q.
\end{align}
By lower semi-continuity of $\br$ we have that $\br(x_k)$ is no larger than the constant $\overline{\gamma}_{r}(z)$, for $z\in (x_k,y_k)$, and since $x_k\notin\supp(\gamma-\br)$ we also have $\gamma(x_k)=\br(x_k)$. For $\epsilon>0$ we can thus pick $z_k\in(x_k,y_k)$ such that
\begin{align}\label{eq: simpleBoundOnHeight}
    \sup_{z\in(x_k,y_k)}\frac{\big(\gamma(z)-\br(z)\big)^q}{1+\epsilon} \leq \big(\gamma(z_k)-\gamma(x_k)\big)^q.
\end{align}
Setting $\Delta_k x\coloneqq y_k-x_k$ and $\Delta_k \gamma\coloneqq \gamma(z_k)-\gamma(x_k)$ we get, by \eqref{eq: aBeginningOfTheL1Control}, \eqref{eq: simpleBoundOnHeight}, Hölder's inequality, and Lemma \ref{lem:lower-box-envelope} \ref{lem:lower-box-envelope-size-supp}, that
\begin{align*}
 \Bigg(\sum_{k=1}^n\int_{x_k}^{y_k} \frac{\big(\gamma(z)-\br(z)\big)^q }{1+\epsilon}\,dz\Bigg)^{\frac{1}{q}} &\leq \|(\Delta_{\subscriptcdot} x)^{\frac{1}{q}}\Delta_{\subscriptcdot} \gamma\|_{\ell^{q}}\\
    &\leq \|(\Delta_{\subscriptcdot} x)^{\frac{1}{q}}\|_{\ell^{\frac{pq}{p-q}}}\|\Delta_{\subscriptcdot}\gamma\|_{\ell^p} \\
      &\leq \|\Delta_{\subscriptcdot} x\|_{\ell^{\infty}}^{\frac{1}{p}}\|\Delta_{\subscriptcdot} x\|_{\ell^{1}}^{\frac{1}{q}-\frac{1}{p}}|\gamma|_{V^p} \\
      &\leq (2r)^{\frac{1}{p}}L^{\frac{1}{q}-\frac{1}{p}}|\gamma|_{V^p}.
\end{align*}
The right-hand side is independent of both $\varepsilon>0$ and our finite selection of connected components from $\supp(\gamma-\overline{\gamma})$: By a limit argument (and Fatou's lemma) we thus get the result.
\end{proof}

We can now prove the desired decomposition result:
\begin{proof}[Proof of Lemma \ref{lem: monotoneDecomposition}]
Because $\gamma$ is of finite $p$-variation with $p<\infty$, it has at most countably many discontinuities; thus, by modifying $\gamma$ on this null set, we can assume it is lower semi-continuous. Define now
    \begin{align*}
        \gamma_0\coloneqq \overline{\gamma}_{r_0}\qquad\text{and}\qquad \gamma_k \coloneqq \overline{\gamma}_{r_0/2^k} - \overline{\gamma}_{r_0/2^{k-1}} \quad(k\in\N),
    \end{align*}
with $\br$ as in Definition \ref{def: definitionOfLowerBoxEnvelope}. Each $\gamma_{k}$ is non-negative by Lemma \ref{lem:lower-box-envelope} \ref{lem:lower-box-envelope-monotone}, and
\begin{align}\label{eq: pointwiseConvergenceOfLayers}
    \sum_{k=0}^n \gamma_k(x) = \overline{\gamma}_{r_0/2^{n}}(x) \to \gamma(x)\qquad \text{as }n\to\infty.
\end{align}
Moreover, by \eqref{eq: qVariationBoundOnFMApproximation} from Proposition \ref{prop: TVBoundandL1ConvergenceOfWidthApproximation}, we have
 \begin{align*}
\sup_{n\in\N}\,\Biggl|\sum_{k=0}^n\gamma_k\Biggr|_{V^p}\!\! = \sup_{n\in\N}\,\abs{\overline{\gamma}_{r_0/2^{n}}}_{V^p}\leq |\gamma|_{V^p}.
\end{align*}
To prove the $L^1$-bound \eqref{eq: L1EstimatesOnLayers}, we again use Lemma \ref{lem:lower-box-envelope} \ref{lem:lower-box-envelope-monotone} to see that
\begin{align*}
    0\leq \gamma_0 \leq \gamma \qquad \text{and}\qquad 0\leq \gamma_{k}\leq \gamma - \overline{\gamma}_{r_0/2^{k-1}} \quad (k\in\N).
\end{align*}
Then \eqref{eq: L1EstimatesOnLayers} follows from the $L^1$-bound \eqref{eq: L1ControllOnFMApproximation}.

It remains to prove the $TV$-bound \eqref{eq: TVEstimatesOnLayers}. For this, we first claim that $ |\gamma_k|_{TV}\leq  |\overline{\gamma}_{r_0/2^{k}}|_{TV}$ for any $k=0,1,\dots$ Indeed, this holds with equality when $k=0$, and for (a fixed) $k\geq 1$ we argue as follows:
By Lemma \ref{lem:lower-box-envelope} \ref{lem:lower-box-envelope-open-supp}, there is a countable family of disjoint open intervals $I_n$ on which $\overline{\gamma}_{r_0/2^{k-1}}$ is locally constant, and such that $\overline{\gamma}_{r_0/2^{k-1}}=\overline{\gamma}_{r_0/2^{k}}=\gamma$ on $\R\setminus \bigcup_n I_n$. Thus,
\begin{align*}
    |\gamma_{k}|_{TV} = \sum_{n} |\overline{\gamma}_{r_0/2^{k}}-\overline{\gamma}_{r_0/2^{k-1}}|_{TV(I_n)}= \sum_{n} |\overline{\gamma}_{r_0/2^{k}}|_{TV(I_n)}\leq |\overline{\gamma}_{r_0/2^{k}}|_{TV}.
\end{align*}
We then get \eqref{eq: TVEstimatesOnLayers} by applying the $TV=V^1$-bound \eqref{eq: qVariationBoundOnFMApproximation} to $|\overline{\gamma}_{r_0/2^{k}}|_{TV}$.
\end{proof}

\subsection{Proof of Theorem~\ref{thm: controlOnIntegralWithPVariationandOnesidedRegularity}}\label{sec:controlling-transport-term}

We can now prove the main result of this section.

\begin{proof}[Proof of Theorem~\ref{thm: controlOnIntegralWithPVariationandOnesidedRegularity}]
Decompose $\gamma = \sum_{k=0}^\infty\gamma_k$ as in Lemma \ref{lem: monotoneDecomposition}. Since this series converges pointwise and with uniformly bounded $p$-variance, Theorem~\ref{thm:young-integral-properties}~\ref{thm:young-integral-properties_pw-convergence} implies that
\[
\int_\R \gamma\,d_x\beta = \sum_{k=0}^\infty \int_\R \gamma_k\,d_x\beta.
\]
We estimate each summand using Proposition \ref{prop: controlOnIntegralWithBVandOnesidedRegularity}. For $k=0$ we get, for any $r_0>0$,
\begin{align*}
&\int_\R \gamma_0\,d_x\beta \leq |\gamma_0|_{TV}\omega\biggl(\frac{\|\gamma_0\|_{L^1}}{|\gamma_0|_{TV}}\biggr)
\leq |\gamma|_{V^p}\frac{L^{1-\frac1p}}{r_0^{1-\frac1p}} \omega\Biggl(\frac{\|\gamma\|_{L^1}r_0^{1-\frac{1}{p}}}{|\gamma|_{V^p}L^{1-\frac{1}{p}}}\Biggr),
\end{align*}
where $r_0>0$ is arbitrary, and where used \eqref{eq: estimatesOnLayers} and the fact that both $\omega$ and $r\mapsto r\omega(C/r)$ are increasing for any $C\geq0$. For $k\geq1$ we estimate similarly
\begin{align*}
\int_\R \gamma_k\,d_x\beta &\leq |\gamma_k|_{TV}\omega\biggl(\frac{\|\gamma_k\|_{L^1}}{|\gamma_k|_{TV}}\biggr) \\
&\leq |\gamma|_{V^p}\bigg(\frac{2^{k}L}{r_0}\bigg)^{1-\frac{1}{p}}\omega\Biggl(\frac{1}{|\gamma|_{V^p}}\bigg(\frac{r_0^{1-\frac{1}{p}}}{2^{k(1-\frac{1}{p})}L^{1-\frac{1}{p}}}\bigg)\bigg(\frac{|\gamma|_{V^p}r_0^{\frac1p}L^{1-\frac{1}{p}}}{2^{\frac{k-2}{p}}}\bigg)\Biggr) \\
&= |\gamma|_{V^p}\bigg(\frac{2^{k}L}{r_0}\bigg)^{1-\frac{1}{p}} \omega\bigg(\frac{r_0}{2^{k-\frac{2}{p}}}\bigg).
\end{align*}
Using Assumption \ref{ass:oshc}, let $C_p>0$ be such that $\omega(h)\leq C_p h^{1-\frac{1}{2p}}$ for all $h>0$. We obtain
\begin{align*}
\sum_{k=1}^\infty\int_\R \gamma_k\,d_x\beta
&\leq |\gamma|_{V^p}\frac{L^{1-\frac1p}}{r_0^{1-\frac1p}}\sum_{k=1}^\infty 2^{k(1-\frac{1}{p})} \omega\bigl(2^{\frac{2}{p}-k}r_0\bigr) \\
&\leq |\gamma|_{V^p}L^{1-\frac{1}{p}}r_0^{\frac{1}{2p}}C_p\bigg(\sum_{k=1}^\infty 2^{k(1-\frac{1}{p})+(1-\frac{1}{2p})(\frac{2}{p}-k)}\bigg) \\
&= |\gamma|_{V^p}L^{1-\frac{1}{p}}r_0^{\frac{1}{2p}}C_p\bigg(\frac{2^{\frac{1}{p}(2-\frac{1}{p})}}{2^{\frac{1}{2p}}-1}\bigg)\eqqcolon c_p|\gamma|_{V^p}L^{1-\frac{1}{p}}r_0^{\frac{1}{2p}}.
\end{align*}
Summarizing, we have
\begin{align*}
    \int_\R \gamma\,d_x\beta \leq |\gamma|_{V^p}\frac{L^{1-\frac1p}}{r_0^{1-\frac1p}}\omega\Biggl(\frac{\|\gamma\|_{L^1}r_0^{1-\frac{1}{p}}}{|\gamma|_{V^p}L^{1-\frac{1}{p}}}\Biggr)+c_p|\gamma|_{V^p}L^{1-\frac{1}{p}}r_0^{\frac{1}{2p}},
\end{align*}
which, by straight forwards algebra, coincides with the right-hand side of \eqref{eq: controlOnIntegralWithPVariationandOnesidedRegularity} for the choice
\begin{equation*}
    r_0 =\frac{\|\gamma\|_{L^1}^{2p}}{|\gamma|_{V^p}^{2p}L^{2p-2}}.\qedhere
\end{equation*}
\end{proof}

\section{Well-posedness in one space dimension}
This section is devoted to the well-posedness of the one-dimensional forwards problem \eqref{eq:transport1d}. In Section \ref{sec:existence-1d-forwards} we show existence of a solution by means of a standard approximation argument. We show that $t\mapsto u(t)$ is H\"older continuous into $L^1(\R)$ in Section~\ref{sec:holder-time-continuity}. Section \ref{sec:forwards-uniqueness} is devoted to uniqueness: We show that solutions are renormalizable in Section~\ref{sec:renormalization}; and show $L^1$-stability and uniqueness in Section~\ref{sec:uniqueness-and-stability}.

\subsection{Existence}\label{sec:existence-1d-forwards}
\begin{theorem}\label{thm:existence-of-solution}
    Let $b$ and $u_0$ satisfy Assumptions \ref{ass:b-conditions} and \ref{ass:uzero-conditions}, respectively. Let $X_t(x)=X_t(x,0)$ be the flow of the velocity field $b$ (as defined in Section \ref{sec: ODEsWithOsgoodVelocityFields}). Then, for all $t\geq 0$, the function
    \begin{align*}
        u(x,t) \coloneqq u_0\bigl(X_t^{-1}(x)\bigr)
    \end{align*}
    is well defined for a.e.~$x\in \R$, and $u$ is a weak solution of \eqref{eq:transport1d} in the sense of Definition \ref{def: definitionOfWeakSolution}. Moreover, if $u^\eps$ is the solution of the equation with mollified velocity $[b]^\eps$ and data $[u_0]^\eps$, then $u^\eps\to u$ pointwise as $\eps\to0$.
\end{theorem}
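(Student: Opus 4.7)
I would prove existence by a mollification argument: approximate $b$ and $u_0$ by their spatial mollifications $[b]^\eps$ and $[u_0]^\eps$, solve the regularized equation classically, and pass to the limit $\eps \to 0$. A direct computation (averaging the one-sided estimate against the probability kernel $\rho_\eps$) shows that $[b]^\eps$ is smooth in $x$ and inherits both \eqref{eq: theOsgoodCondition} and \eqref{eq: oneSidedHölderContinuousOfAllOrdersLessThanOne} with the same $\lambda$ and $\omega_b$. The classical method of characteristics then yields $u^\eps(x,t) = [u_0]^\eps\bigl((X^\eps_t)^{-1}(x)\bigr)$, where $X^\eps_t$ is the flow of $[b]^\eps$, and routine integration by parts in both variables gives
\[
\int_0^\infty\!\!\int_{\R} u^\eps\, \partial_t\phi\,dx\,dt + \int_0^\infty\!\!\int_{\R} u^\eps\, \partial_x([b]^\eps\phi)\,dx\,dt + \int_{\R}[u_0]^\eps\phi(x,0)\,dx = 0,
\]
where by \eqref{eq: theObviousIdentityOfRiemannStieltjesIntegralInTheSmoothCase} the transport term equals $\int u^\eps \, d_x([b]^\eps\phi)$.

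First I would verify the candidate $u(x,t) = u_0(X_t^{-1}(x))$ satisfies the regularity condition \ref{def: definitionOfWeakSolution-finite-variation}. By Theorem \ref{thm:ode-well-posed}, $X_t$ is a strictly increasing continuous bijection of $\R$ (hence a homeomorphism), and $\abs{X_t(x)-x} \leq \|b\|_{L^\infty}t$ gives $X_t^{-1}([-R,R]) \subseteq [-R',R']$ with $R' \coloneqq R + T\|b\|_{L^\infty}$. Since composition with a monotone bijection trivially preserves $p$-variation on the image,
\[
\esssup_{0 \leq t \leq T} \abs{u(\cdot,t)}_{V^p([-R,R])} \leq \abs{u_0}_{V^p([-R',R'])} < \infty.
\]
For the pointwise convergence $u^\eps \to u$: Theorem \ref{thm:ode-well-posed} gives $X^\eps \to X$ locally uniformly, and strict monotonicity upgrades this, by a standard contradiction argument, to $(X^\eps_t)^{-1} \to X_t^{-1}$ locally uniformly. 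Since $u_0$ has at most countably many discontinuities (because $\abs{u_0}_{V^p_{\loc}}<\infty$) and $[u_0]^\eps \to u_0$ at every continuity point, for each $t$ we obtain $u^\eps(\cdot,t) \to u(\cdot,t)$ on a dense subset of $\R$.

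The key step is passing $\eps\to 0$ in $\int_0^\infty\!\int_{\R} u^\eps\,d_x([b]^\eps\phi)\,dt$. Fix a compact $[-R,R]$ containing $\supp\phi(\cdot,t)$ for all $t$, and $q>1$ with $1/p+1/q>1$. A Jensen-type argument shows $\abs{[u_0]^\eps}_{V^p}$ is bounded in $\eps$ on enlarged intervals, so via the monotone-composition argument the family $\{u^\eps(\cdot,t)\}$ has $p$-variation on $[-R,R]$ uniformly bounded in $\eps$ and $t\in[0,T]$. Corollary \ref{cor: finitePVariationOfTheVelocityB} applied to $[b]^\eps$ gives $\abs{[b]^\eps(t)\phi(t)}_{V^q([-R,R])} \lesssim 1+\lambda(t)$ uniformly in $\eps$. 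Combined with the locally uniform convergence $[b]^\eps(\cdot,t)\phi(\cdot,t) \to b(\cdot,t)\phi(\cdot,t)$ (using spatial continuity of $b$) and the dense-set convergence $u^\eps(\cdot,t) \to u(\cdot,t)$, Theorem \ref{thm:young-integral-properties} \ref{thm:young-integral-properties_pw-convergence} yields pointwise-in-$t$ convergence of the inner Riemann--Stieltjes integral. An $\eps$-uniform, $t$-integrable majorant produced by Theorem \ref{thm:young-integral-properties} \ref{thm:young-integral-properties_size-control} (using $\lambda\in L^1_{\loc}$) then allows dominated convergence on the outer $t$-integral. The remaining terms $\int u^\eps\partial_t\phi$ and $\int [u_0]^\eps\phi(x,0)$ pass to the limit by standard pointwise-plus-dominated-convergence arguments. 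The chief technical obstacle is verifying that the various uniform $p$- and $q$-variation bounds are mutually compatible and $t$-integrable, so that dominated convergence can be applied to the Riemann--Stieltjes transport term; once this bookkeeping is in place, existence and the claimed pointwise convergence both follow.
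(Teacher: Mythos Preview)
Your overall strategy---mollify, solve classically, pass to the limit using the stability of Young integrals---is exactly the paper's approach, and most of the bookkeeping you sketch (uniform $p$-variation of $u^\eps$ via monotone composition, uniform $q$-variation of $[b]^\eps\phi$ via Corollary~\ref{cor: finitePVariationOfTheVelocityB}, then Theorem~\ref{thm:young-integral-properties}~\ref{thm:young-integral-properties_pw-convergence} plus dominated convergence) is correct.

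There is, however, a genuine gap. You assert that $X_t$ is a \emph{strictly} increasing continuous bijection of $\R$, hence a homeomorphism, and then use this to get locally uniform convergence $(X^\eps_t)^{-1}\to X_t^{-1}$. But Theorem~\ref{thm:ode-well-posed} only gives that $X_t$ is \emph{non-decreasing} and surjective: the Osgood condition~\ref{ass:osoc} is \emph{one-sided}, so it bounds the expansion rate of the forward flow but places no lower bound on $b(x,t)-b(y,t)$ for $x>y$. The flow can therefore collapse intervals to points in finite time (take e.g.\ $b(x)=-\mathrm{sign}(x)|x|^{1/2}$ near $0$, which satisfies \ref{ass:osoc} trivially since all forward increments are non-positive). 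Consequently $X_t^{-1}$ is genuinely ill-defined on an (at most countable) exceptional set $E_t$, and your ``standard contradiction argument'' for uniform convergence of inverses breaks down precisely at those points.

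The paper repairs this by working on the co-countable set $F_t^c$ where both $X_t^{-1}$ is single-valued and $u_0$ is continuous at the preimage, and then proving $u^\eps(y,t)\to u(y,t)$ for each $y\in F_t^c$ via a squeeze argument: pick $x_\delta<y<z_\delta$ in $F_t^c$ with $z_\delta-x_\delta<\delta$, use pointwise convergence $X_{\eps,t}\to X_t$ at the preimages $x_{0,\delta},z_{0,\delta}$, and monotonicity of $X_{\eps,t}$ to trap $(X_{\eps,t})^{-1}(y)$ between them. This yields only pointwise-a.e.\ convergence of $u^\eps(\cdot,t)$ (not uniform), but that is exactly what Theorem~\ref{thm:young-integral-properties}~\ref{thm:young-integral-properties_pw-convergence} requires. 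Once you replace your bijection claim with this squeeze argument, the rest of your outline goes through unchanged.
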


\begin{proof}
    By assumption \ref{ass:regularityOfInitialData} and Proposition \ref{prop:p-variation} \ref{prop:p-variation-limits}, we may assume that $u_0$ is right-continuous. In particular, the discontinuities of $u_0$ form a countable set $D$. Fix $t\geq 0$. By Theorem~\ref{thm:ode-well-posed}, the function $x\mapsto X_t(x)$ is non-decreasing and surjective, so $X_t^{-1}$ is ill-defined at an (at most) countable set of point that we denote $E_t$; on the remaining set $\R\setminus E_t$, $X_t^{-1}$ is well-defined, injective and continuous. To combine the bad set of $u_0$ with that of $X_t$ we define the countable set
    \begin{equation*}
        F_t\coloneqq X_t(D)\cup E_t,
    \end{equation*}
    and conclude that $u(\cdot,t)=u_0\circ X_t^{-1}$ is well-defined and continuous on $\R\setminus F_t$.

 The fact that $u$ solves the equation in the sense of Definition \ref{def: definitionOfWeakSolution} follows by an approximation argument, as follows. Recalling the notation in Section \ref{sec:notation}, we let $[b]^\eps$, $[u_0]^\eps$ be spatial mollifications of $b$, $u_0$, and $X_\eps$ the flow of $[b]^\eps$. As $b$ is bounded, the mapping $x\mapsto [b]^\eps(x,t)$ is uniformly (in $t$) Lipschitz continuous, and therefore $x\mapsto X_{\eps,t}(x)$ is bi-Lipschitz continuous with both $\partial_x X_{\eps,t}$ and $\partial_x (X_{\eps,t})^{-1}$ bounded by $e^{Ct/\varepsilon}$ for an appropriate constant $C$. The unique weak and locally bounded solution of $\partial_t u^\eps + [b]^\eps \partial_xu^\eps = 0$ with $u(\cdot,0)=[u_0]^\eps$ is then given by
 \begin{equation*}
     u^\eps(x,t)\coloneqq [u_0]^\eps\bigl((X_{\eps,t})^{-1}(x)\bigr).
 \end{equation*}
 That is, the integral equation
\begin{equation}\label{eq: epsilonSolutionsSolvesTheEquationWeakly}
    \int_0^{\infty}\int_{\R} u^\eps \partial_t\phi\, dx\, dt + \int_0^{\infty}\int_{\R} u^\eps\,d_x ([b]^\eps \phi) \, dt + \int_{\R} [u_0]^\eps\phi\, dx = 0,
\end{equation}
 holds for every $\varphi\in C_c^\infty(\R\times[0,\infty))$, where the Riemann--Stieltjes integral in the middle can be interpreted as a standard Lebesgue integral since $d_x([b]^\varepsilon \varphi)=\partial_x([b]^\varepsilon \varphi)\, dx$.
 It remains to prove that $u_0,b,u$ can replace $[u_0]^\eps,[b]^\eps,u^\eps$ in  \eqref{eq: epsilonSolutionsSolvesTheEquationWeakly}. We first claim the following:
 \begin{enumerate}[label=\textit{(\roman*)}]
     \item $[u_0]^\eps\to u_0$ in $L^1_{\loc}(\R)$,
     \item $[b]^\eps\to b$ locally uniformly in $x$ for a.e.~$t>0$, and
     \item $u^\eps\to u$ pointwise whenever $x\in \R\setminus F_t$ and $t>0$.
 \end{enumerate}
The first two limits are obvious (since $u_0\in L^1_\loc(\R)$, and since $b$ is continuous in $x$), and so we focus on proving the third. Fix $t>0$ and $y\in \R\setminus F_t$ and set $y_0\coloneqq (X_t)^{-1}(y)$. Since $F_t$ is countable, we can for every $\delta>0$ pick $x_\delta,z_\delta\in \R\setminus F_t$ such that $x_\delta<y<z_\delta$ and $|z_\delta-x_\delta|<\delta$. By monotonicity and injectivity, we also have $x_{0,\delta}\coloneqq (X_t)^{-1}(x_\delta)<y_0 < (X_t)^{-1}(z_\delta)\eqqcolon z_{0,\delta}$. We know that $X_{\eps,t}\to X_t$ pointwise as $\eps\to0$ (see~Theorem~\ref{thm:ode-well-posed}), and so
 \begin{align*}
    \lim_{\varepsilon \downarrow 0}X_{\eps,t}(x_{0,\delta}) = x_\delta\qquad &\implies \qquad \liminf_{\varepsilon\downarrow 0} (X_{\eps,t})^{-1}(y)\geq x_{0,\delta},\\
    \lim_{\varepsilon \downarrow 0}X_{\eps,t}(z_{0,\delta})=z_\delta\qquad &\implies \qquad \limsup_{\varepsilon\downarrow 0} (X_{\eps,t})^{-1}(y)\leq z_{0,\delta},
 \end{align*}
 where the implications follow from monotonicity of $X_{\eps,t}$. Since $X_t^{-1}$ is continuous at $y$ it follows that $x_{0,\delta},z_{0,\delta}\to y_0$ as $\delta\downarrow 0$, and so $\lim_{\varepsilon\downarrow 0}(X_{\eps,t})^{-1}(y)= X^{-1}_t(y)$. Finally, as $u_0$ is continuous at $X^{-1}_t(y)\in \R\setminus D$, we also get $\lim_{\varepsilon\downarrow 0}[u_0]^\eps(X_{\eps,t}^{-1}(y))= u_0(X^{-1}_t(y))$.

With the above established limits, it remains to prove uniform $p$-variation estimates on $u^\eps$ and $[b]^\eps\varphi$ (where we consider $\varphi$ fixed). For this, let $R,T>0$ be constants such that $\supp \varphi \subset [-R,R]\times (-\infty,T]$. By assumption \ref{ass:regularityOfInitialData}, there is some $p\in[1,\infty)$ such that
\begin{align*}
    |u_0|_{V^p([-\tilde{R},\tilde{R}])}<\infty,
\end{align*}
where $\tilde{R}\coloneqq R + T\|b\|_{L^\infty} + 1$. By monotonicity and finite speed of propagation of the flow $X_{\eps,t}$, it is easy to see that we for every $\varepsilon\in(0,1)$ have
\begin{align}\label{eq: uniformEstimatesForUEpsilon}
    \sup_{0<t<T}|u^\eps(t)|_{V^p([-R,R])}\leq |u_0|_{V^p([-\tilde{R},\tilde{R}])}.
\end{align}
For $[b]^\eps\varphi$, we write $\varphi_+$ and $\varphi_-$ for the positive and negative part of $\varphi$ respectively, and note that both  $[b]^\eps\varphi_+$ and $[b]^\eps\varphi_-$ admit $(t,h)\mapsto \|b\|_{L^\infty}\|\partial_x\varphi\|_{L^\infty} h + \|\varphi\|_{L^\infty}\lambda(t)\omega_b(h)$ as a one-sided continuity modulus; applying then Corollary \ref{cor: finitePVariationOfTheVelocityB} we get
\begin{align}\label{eq: uniformEstimatesForBEpsilon}
     \bigl|[b]^\eps(t)\varphi(t)\bigr|_{V^q([-R,R])}\leq C(1+\lambda(t)),
\end{align}
for a.e.~$t\in[0,\infty)$, where we have fixed $q\in[1,\frac{p}{p-1})$, and where $C$ is independent of $\varepsilon$ and $t$.
Then, by the above three limits, the uniform estimates \eqref{eq: uniformEstimatesForUEpsilon} and \eqref{eq: uniformEstimatesForBEpsilon}, and the equation \eqref{eq: epsilonSolutionsSolvesTheEquationWeakly}, we can send $\varepsilon\downarrow 0$ and conclude by Theorem~\ref{thm:young-integral-properties}~\ref{thm:young-integral-properties_size-control} and \ref{thm:young-integral-properties_pw-convergence} and by dominated convergence that $u$ is a solution of \eqref{eq:transport1d} in the sense of Definition \ref{def: definitionOfWeakSolution}.
\end{proof}

\subsection{Time continuity}\label{sec:holder-time-continuity}

\begin{proposition}\label{prop:time-continuity}
    Let $u$ be a solution of \eqref{eq:transport1d} in the sense of Definition \ref{def: definitionOfWeakSolution}. Then $u\in C\big([0,\infty), L^1_{\loc}(\R)\big)$.
\end{proposition}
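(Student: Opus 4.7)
The plan is to regularize $u$ by spatial convolution --- set $u_\epsilon(x,t) := [u(\cdot,t)]^\epsilon(x)$ --- and to sandwich $u$ between a family of time-continuous functions that approximate it uniformly in $t$. First, I would derive a distributional ODE for $u_\epsilon$: testing the weak formulation \eqref{eq: theWeakRiemannStieltjesFormOfTheEquation} with $\phi(y, t) = \rho_\epsilon(x - y)\eta(t)$, for a fixed $x$ and arbitrary $\eta \in C_c^\infty([0, \infty))$, yields
$$\int_0^\infty u_\epsilon(x, t)\eta'(t)\, dt + \int_0^\infty F_\epsilon(x, t)\eta(t)\, dt + \eta(0) [u_0]^\epsilon(x) = 0,$$
where $F_\epsilon(x, t) := \int_\R u(y, t)\,d_y\bigl(b(y, t)\rho_\epsilon(x - y)\bigr)$. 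This identifies $\partial_t u_\epsilon(x, \cdot) = F_\epsilon(x, \cdot)$ in $\mathcal{D}'((0, \infty))$ with initial trace $u_\epsilon(x, 0^+) = [u_0]^\epsilon(x)$.

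To establish continuity of $u_\epsilon$ in $t$, I would estimate $F_\epsilon$. Corollary \ref{cor: finitePVariationOfTheVelocityB} and the product rule Proposition \ref{prop:p-variation}\ref{prop:p-variation-product-rule} give $\bigl|b(\cdot,t)\rho_\epsilon(x - \cdot)\bigr|_{V^q(\R)} \leq C_\epsilon\bigl(1 + \lambda(t)\bigr)$ for any $q \in \bigl(1, p/(p - 1)\bigr)$, where $p$ is as in \eqref{eq: regularityAssumptionOnSolution}. Together with the uniform $V^p$ control of $u(\cdot, t)$ on compacts, Theorem \ref{thm:young-integral-properties}\ref{thm:young-integral-properties_size-control} then yields $|F_\epsilon(x, t)| \leq G_\epsilon(t)$ for $x \in [-R, R]$, with $G_\epsilon \in L^1_\loc([0, \infty))$. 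Hence $u_\epsilon$ admits an AC-in-$t$ representative with values in $L^\infty([-R, R])$, so $u_\epsilon \in C([0, T]; L^1([-R, R]))$ with $u_\epsilon(\cdot, 0) = [u_0]^\epsilon$. For the uniform convergence $u_\epsilon \to u$, I would apply Fubini, Hölder, and Lemma \ref{lem:l1-translation-error-estimate}:
\begin{align*}
\|u(\cdot, t) - u_\epsilon(\cdot, t)\|_{L^1([-R, R])} &\leq \int_\R \rho_\epsilon(z) \int_{-R}^R |u(x, t) - u(x - z, t)|\, dx\, dz \\
&\lesssim \epsilon^{1/p} \sup_{t \in [0, T]} |u(t)|_{V^p([-R - 1, R + 1])},
\end{align*}
which vanishes uniformly in $t \in [0, T]$ as $\epsilon \to 0$. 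Since uniform limits of continuous functions are continuous, this delivers $u \in C([0, T]; L^1([-R, R]))$ for all $R, T > 0$, and $u(\cdot, 0) = u_0$ in $L^1_\loc$ by passing to the limit in $u_\epsilon(\cdot, 0) = [u_0]^\epsilon$.

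The main hurdle is the first step: rigorously extracting the distributional ODE for $u_\epsilon$ from the Riemann--Stieltjes weak formulation and obtaining a $t$-integrable uniform bound on $F_\epsilon$. This exploits the full Young-integral machinery of Section \ref{sec:riemann-stieltjes} and relies on the calibration $1/p + 1/q > 1$ between the regularities of $u$ (finite $p$-variation) and $b$ (Assumption \ref{ass:oshc}). The uniform approximation step is comparatively routine once Lemma \ref{lem:l1-translation-error-estimate} is available.
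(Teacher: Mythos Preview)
Your proposal is correct and follows essentially the same route as the paper: mollify in space, test the weak formulation with $\rho_\epsilon(x-y)\eta(t)$ to extract a distributional ODE for $[u]^\epsilon$, bound the Riemann--Stieltjes term via Corollary~\ref{cor: finitePVariationOfTheVelocityB}, Proposition~\ref{prop:p-variation}\ref{prop:p-variation-product-rule} and Theorem~\ref{thm:young-integral-properties}\ref{thm:young-integral-properties_size-control}, and then use Lemma~\ref{lem:l1-translation-error-estimate} for the uniform-in-$t$ approximation $[u]^\epsilon\to u$ in $L^1_\loc$. The only cosmetic difference is that the paper optimizes $\epsilon$ at the end to obtain an explicit H\"older-in-time modulus, whereas you conclude directly via the uniform-limit argument.
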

\begin{proof}
Fix $R,T>0$; it suffices to prove that $u\in C\big([0,T], L^1([-R,R])\big)$. If we in \eqref{eq: theWeakRiemannStieltjesFormOfTheEquation} relabel $x\mapsto y$ and insert the test function $(y,t)\mapsto \rho_\varepsilon(x-y)\sigma(t)$, where $\rho_\varepsilon$ is a standard mollifier, $\sigma\in C_c^\infty([0,T))$, and $x\in[-R,R]$ is a fixed parameter, then we get
    \begin{equation*}
           \int_{0}^T [u]^\varepsilon (x,t)\partial_t\sigma(t)\, dt + [u_0]^\varepsilon(x)\sigma(0)= \int_0^T\int_\R\rho_\varepsilon(x-y)b(y,t) \, d_yu(y,t)\sigma(t)\, dt,
    \end{equation*}
where $[u]^\varepsilon$,$[u_0]^\varepsilon$ denote the spatial mollifications of $u,u_0$, and where we integrated by parts (Theorem \ref{thm:young-integral-properties} \ref{thm:young-integral-properties-integration-by-parts}) on the right-hand side. Note that we can safely assume $\supp\rho_{\varepsilon}(x-\cdot)\subseteq A\coloneqq [-(R+1),R+1]$ for all $\varepsilon\in(0,1)$ and $x\in[-R,R]$. Applying Theorem~\ref{thm:young-integral-properties}~\ref{thm:young-integral-properties_size-control} (with $\theta=0$) and Proposition~\ref{prop:p-variation}~\ref{prop:p-variation-product-rule}, we obtain the bound
\begin{align*}
    &\bigg|\int_\R \rho_\varepsilon(x-y)b(y,t)\, d_yu(y,t)\bigg|= \bigg|\int_A \rho_\varepsilon(x-y)b(y,t)\, d_yu(y,t)\bigg|\\ &\qquad\leq \|\rho_\varepsilon\|_{L^\infty}\|b\|_{L^\infty}\|u(\cdot,t)\|_{L^\infty(A)} \\
    &\qquad\quad + C_{p,q} |u(\cdot,t)|_{V^p(A)} \Big(|\rho_\varepsilon(x-\cdot)|_{V^{q}(A)}\|b\|_{L^\infty}  +\|\rho_\varepsilon\|_{L^\infty}|b(\cdot,t)|_{V^q(A)}\Big)\\
    &\qquad\lesssim_{R,T,p,q} \frac{1+\lambda(t)}{\varepsilon}
\end{align*}
where $p$ is as in \eqref{eq: regularityAssumptionOnSolution} and $q\in[1,\frac{p}{p-1})$, and where we used Corollary \ref{cor: finitePVariationOfTheVelocityB}. Inserting this above yields
\begin{align*}
    \bigg|\int_{0}^\infty [u]^\varepsilon (x,t)\sigma'(t)\, dt + [u_0]^\varepsilon(x)\sigma(0)\bigg|\lesssim_{R,T,p,q} \int_0^T\frac{(1+\lambda(t))}{\varepsilon}|\sigma(t)|\,dt.
\end{align*}
By standard arguments it follows  that $t\mapsto [u]^\varepsilon (x,t)$ is absolutely continuous on $[0,T]$, with a weak derivative satisfying $|\partial_t [u]^\varepsilon (x,t)|\leq C(1+\lambda(t))/\varepsilon$ for a $C$ independent of $x\in[-R,R]$, $t\in[0,T]$, and $\varepsilon\in(0,1)$, and where $[u]^{\varepsilon}(x,0)=[u_0]^\varepsilon(x)$. Thus, for $0\leq s\leq t\leq T$ we get
\begin{align*}
    \bigl\|[u]^\varepsilon(t)-[u]^\varepsilon(s)\bigr\|_{L^1([-R,R])}\leq C\frac{2R}{\varepsilon}\int_{s}^{t}1+\lambda(\tau)\, d\tau.
\end{align*}
Furthermore, for a.e.~$t\in[0,T]$ we have
\begin{align*}
    \|u(t)-[u]^\varepsilon(t)\|_{L^1([-R,R])}\leq &\,\int_{|y|\leq \varepsilon}\frac{\rho(y/\varepsilon)}{\varepsilon}\bigg(\int_{-R}^R|u(x,t)-u(x-y,t)|\, dx\bigg) dy\\
   \leq &\, \|\rho\|_{L^1(\R)}(2R)^{1-\frac{1}{p}}\varepsilon^{\frac{1}{p}}|u(t)|_{V^p([-R,R])}
\end{align*}
where we for the inner integral used Hölder's inequality and Lemma \ref{lem:l1-translation-error-estimate}. By \eqref{eq: regularityAssumptionOnSolution} and the triangle inequality, these estimates yield
\begin{align*}
    \|u(t)-u(s)\|_{L^1[-R,R]}
    \leq \tilde C\Bigg( \varepsilon^{\frac{1}{p}} +\eps^{-1} \int_s^t 1+\lambda(\tau)\, d\tau\Bigg),
\end{align*}
for a.e.~$0\leq s\leq t\leq T$ and all $\varepsilon\in(0,1)$, where  $\tilde{C}$ is some large constant independent of $s,t,\varepsilon$. The result now follows by setting $\varepsilon = \big(\int_s^t1+\lambda(\tau)\,d\tau\big)^{\frac{p}{p+1}}$ and modifying $t\mapsto u(t)$ appropriately on a null set.
\end{proof}

\subsection{Uniqueness}\label{sec:forwards-uniqueness}
The uniqueness argument is much more demanding than the existence argument. It is therefore split over several subsections.

\subsubsection{Renormalization}\label{sec:renormalization}
\begin{theorem}[Renormalization of weak solutions]\label{thm:renormalizability}
Let $u$ be a weak solution of \eqref{eq:transport1d}. Then $\eta\circ u$ is also a weak solution, for any Lipschitz function $\eta\from\R\to\R$.
\end{theorem}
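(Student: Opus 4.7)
The plan is to follow the DiPerna--Lions renormalization strategy, adapted to our Riemann--Stieltjes weak formulation. First, I would reduce to the case $\eta\in C^\infty$ with bounded derivatives: for a general Lipschitz $\eta$, approximate it by smooth $\eta_n$ with $\eta_n\to\eta$ uniformly on bounded sets and $\|\eta_n'\|_{L^\infty}$ uniformly bounded. Passing to the limit $n\to\infty$ in the weak formulation for $\eta_n\circ u$ then yields the claim for $\eta$, using Young-integral stability (Theorem~\ref{thm:young-integral-properties}~\ref{thm:young-integral-properties_pw-convergence}) together with the uniform $p$-variation bound $|\eta_n(u(t))|_{V^p}\leq\|\eta_n'\|_{L^\infty}|u(t)|_{V^p}$.

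For smooth $\eta$, I would mollify $u$ spatially: set $u^\varepsilon(x,t):=[u(t)]^\varepsilon(x)$. Testing the weak equation \eqref{eq: theWeakRiemannStieltjesFormOfTheEquation} against $(y,t)\mapsto\rho_\varepsilon(x-y)\sigma(t)$ with $\sigma\in C^\infty_c([0,\infty))$, and applying Young integration by parts (Theorem~\ref{thm:young-integral-properties}~\ref{thm:young-integral-properties-integration-by-parts}), one finds that $t\mapsto u^\varepsilon(x,t)$ is absolutely continuous with
\[
\partial_t u^\varepsilon(x,t)+b(x,t)\partial_x u^\varepsilon(x,t)=-r_\varepsilon(x,t),\qquad u^\varepsilon(x,0)=[u_0]^\varepsilon(x),
\]
where the commutator $r_\varepsilon(x,t):=\int_\R[b(y,t)-b(x,t)]\rho_\varepsilon(x-y)\,d_y u(y,t)$ is a well-defined Young integral by Corollary~\ref{cor: finitePVariationOfTheVelocityB}. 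Since $u^\varepsilon$ is $C^\infty$ in $x$ and absolutely continuous in $t$, the chain rule yields $\partial_t\eta(u^\varepsilon)+b\partial_x\eta(u^\varepsilon)=-\eta'(u^\varepsilon)r_\varepsilon$. Integrating this against any $\phi\in C^\infty_c(\R\times[0,\infty))$ gives
\begin{align*}
&\int_0^\infty\!\!\int_\R\eta(u^\varepsilon)\partial_t\phi\,dx\,dt+\int_0^\infty\!\!\int_\R\eta(u^\varepsilon)\,d_x(b\phi)\,dt+\int_\R\eta([u_0]^\varepsilon)\phi(x,0)\,dx\\
&\qquad=\int_0^\infty\!\!\int_\R\eta'(u^\varepsilon)r_\varepsilon\phi\,dx\,dt.
\end{align*}

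Next, I would pass $\varepsilon\downarrow 0$ in this identity. The three left-hand side terms converge to the corresponding weak terms for $\eta(u)$: since $u^\varepsilon\to u$ in $L^1_\loc$ and $\eta$ is Lipschitz, the pointwise terms converge in $L^1_\loc$; the Riemann--Stieltjes term converges by Theorem~\ref{thm:young-integral-properties}~\ref{thm:young-integral-properties_pw-convergence}, using the uniform bound $|\eta(u^\varepsilon)(t)|_{V^p}\leq\|\eta'\|_{L^\infty}|u^\varepsilon(t)|_{V^p}\leq\|\eta'\|_{L^\infty}|u(t)|_{V^p}$ (the second inequality being the standard fact that mollification is non-expansive on $V^p$-seminorms, a consequence of Jensen's inequality).

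The main obstacle is showing that the right-hand side vanishes: $\int_0^T\!\!\int_\R\eta'(u^\varepsilon)\phi r_\varepsilon\,dx\,dt\to 0$. My plan is to apply Fubini for Young integrals (Lemma~\ref{lem:fubini}) to the inner $x$-integral, rewriting it as $\int_\R\Psi_\varepsilon(y,t)\,d_y u(y,t)$ where
\[
\Psi_\varepsilon(y,t)=b(y,t)\bigl[\eta'(u^\varepsilon(\cdot,t))\phi(\cdot,t)\bigr]^\varepsilon(y)-\bigl[\eta'(u^\varepsilon(\cdot,t))\phi(\cdot,t)b(\cdot,t)\bigr]^\varepsilon(y).
\]
Equivalently, $\Psi_\varepsilon(y,t)=\int\rho_\varepsilon(x-y)\eta'(u^\varepsilon(x,t))\phi(x,t)[b(y,t)-b(x,t)]\,dx$, and using that $b(\cdot,t)$ is uniformly continuous on the compact $\supp\phi(t)+B_\varepsilon(0)$, one obtains $\|\Psi_\varepsilon(\cdot,t)\|_{L^\infty}\to 0$ for a.e.~$t$. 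Combined with a uniform $V^q$-bound on $\Psi_\varepsilon$ and the size estimate of Theorem~\ref{thm:young-integral-properties}~\ref{thm:young-integral-properties_size-control} with some $\theta>0$, this yields $\bigl|\int\Psi_\varepsilon\,d_y u\bigr|\to 0$ pointwise in $t$, and dominated convergence closes the argument. The most delicate technical point is the uniform $V^q$-control of $\Psi_\varepsilon$: the Young exponent $q$ must be chosen compatibly with the $p$-variation exponent of $u$, and one exploits both the smoothness of $\eta$ and the non-expansivity of mollification on $V^q$-seminorms.
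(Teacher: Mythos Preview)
Your overall strategy (mollify in $x$, apply the chain rule, show the commutator vanishes) is the same as the paper's, and the reductions (smooth $\eta$, convergence of the left-hand side) are fine. The gap is in the commutator step.

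You rewrite $\iint \eta'(u^\varepsilon)\phi\,r_\varepsilon\,dx\,dt$ as $\int\!\int \Psi_\varepsilon\,d_y u\,dt$ and then assert a \emph{uniform} $V^q$-bound on $\Psi_\varepsilon$ for some $q$ with $\tfrac1q+\tfrac1p>1$. But $\Psi_\varepsilon = b\,[\eta'(u^\varepsilon)\phi]^\varepsilon - [\eta'(u^\varepsilon)\phi\,b]^\varepsilon$ contains the factor $F:=\eta'(u^\varepsilon)\phi$, which enjoys only a uniform $V^p$-bound (via $\eta\in C^2$ and non-expansivity of mollification). For $p\geq 2$ one has $p/(p-1)\leq 2\leq p$, so there is no $q<p/(p-1)$ with $|\Psi_\varepsilon|_{V^q}$ controlled uniformly; neither ``smoothness of $\eta$'' nor ``non-expansivity of mollification on $V^q$'' bridges this gap, and invoking the size estimate with $\theta>0$ does not help because the $|g|_{V^\infty}^\theta$ factor lands on $u$, not on $\Psi_\varepsilon$. (A Weierstrass-type $u$ shows $|[F]^\varepsilon|_{V^q}$ can blow up like $\varepsilon^{-(1-1/p)/q}$ while $\|\Psi_\varepsilon\|_{L^\infty}\lesssim|b|_{V^\infty_\varepsilon}$ may decay arbitrarily slowly.)

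The paper avoids this obstruction by a different decomposition of the commutator: it integrates by parts in $y$ (Theorem~\ref{thm:young-integral-properties}\ref{thm:young-integral-properties-integration-by-parts},\ref{thm:young-integral-properties_product-rule}) to replace $d_y u$ by an ordinary Lebesgue integral in $y$ plus a $d_y b$ term, then adds and subtracts $u(x,t)$. This produces a term $A_\varepsilon$ carrying the product of increments $[b(x,t)-b(y,t)][u(y,t)-u(x,t)]$, which is handled by the dedicated translation estimate of Lemma~\ref{lem:l1-translation-error-estimate} (specifically \eqref{eq:double-translation-estimate}); the small factor there is $|b(t)|_{V^\infty_\varepsilon}^{1-\theta}\to0$. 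The remaining terms $B_\varepsilon$ and $C_\varepsilon$ are seen to cancel in the limit via Young-integral stability. It is this add--subtract and Lemma~\ref{lem:l1-translation-error-estimate} that your proposal is missing.
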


\begin{proof}
The $p$-variation estimate \eqref{eq: regularityAssumptionOnSolution} on $\eta\circ u$ follows from boundedness of $u$ and Lipschitz continuity of $\eta$.

Assume first that $\eta\in C^2(\R)$ with bounded derivatives. Recalling the notation in Section \ref{sec:notation}, we let $[u]^{\eps,\delta}$ denote space-time mollification of $u$ and $[u]^\eps$ spatial mollification.
By inserting $[u]^{\eps,\delta}$ into the weak formulation of \eqref{eq:transport1d}, we find that $[u]^{\eps,\delta}$ satisfies
\[
\partial_t [u]^{\eps,\delta} + b\partial_x [u]^{\eps,\delta} = r_{\eps,\delta} \qquad\text{for } x\in\R, t>\delta
\]
in the classical sense, with
\[
r_{\eps,\delta}(x,t) \coloneqq \int_0^\infty \int_\R \rho_\eps(x-y)\rho_\delta(t-s)\bigl(b(x,t) - b(y,s)\bigr)\,d_yu(y,s)\,ds,
\]
where we have integrated by parts (Theorem \ref{thm:young-integral-properties} \ref{thm:young-integral-properties-integration-by-parts}). In particular
\begin{equation}\label{eq:mollified-transport-equation}
\partial_t \eta([u]^{\eps,\delta}) + b\partial_x \eta([u]^{\eps,\delta}) = r_{\eps,\delta}\eta'([u]^{\eps,\delta}).
\end{equation}
Let $\phi\in C_c^\infty(\R\times(0,\infty))$, and let $\delta>0$ be small enough that $\supp\phi\subset\R\times[\delta,T-\delta]$ for some $T>0$. (The strong temporal continuity from Proposition~\ref{prop:time-continuity} implies that we only need to consider test functions supported away from $t=0$.) Multiplying \eqref{eq:mollified-transport-equation} by $\phi$ and integrating yields
\begin{align}\label{eq:mollified-transport-integratedInTimeAndSpace}
\iint  \eta([u]^{\eps,\delta})\partial_t\phi + \eta([u]^{\eps,\delta})\partial_x(\phi b) \,dx\,dt = -\iint \phi r_{\eps,\delta}\eta'([u]^{\eps,\delta})\,dx\,dt
\end{align}
We claim that, as $\delta\to0$, the above converges to
\begin{equation}\label{eq:mollified-transport-integrated}
\iint  \eta([u]^{\eps})\partial_t\phi + \eta([u]^{\eps})\partial_x(\phi b) \,dx\,dt = -\iint \phi r_{\eps}\eta'([u]^{\eps})\,dx\,dt
\end{equation}
where
\[
r_{\eps}(x,t) \coloneqq \int_\R \rho_\eps(x-y)\bigl(b(x,t) - b(y,t)\bigr)\,d_yu(y,t).
\]
It is clear that the left-hand side of \eqref{eq:mollified-transport-integratedInTimeAndSpace} converges to that of \eqref{eq:mollified-transport-integrated} by standard arguments. For the right-hand side, we use the ``Fubini theorem'', Lemma~\ref{lem:fubini}, so to move the integral in $t$:
\begin{align}\label{eq: integralWithG}
\iint \phi r_{\eps,\delta}\eta'([u]^{\eps,\delta})\,dx\,dt
= \iiint G_{\eps,\delta}(x,y,s)\,d_yu(y,s)\,ds\,dx
\end{align}
where
\[
G_{\eps,\delta}(x,y,s)\coloneqq \int\phi(x,t) \rho_\eps(x-y)\rho_\delta(t-s)(b(x,t)-b(y,s))
\eta'([u]^{\eps,\delta})(x,t)\,dt.
\]
For a.e.~$x\in\R$ and $s\geq0$, the term $G_{\eps,\delta}(x,y,s)$ converges uniformly in $y$ as $\delta\to0$. Thus, using Theorem~\ref{thm:young-integral-properties}~\ref{thm:young-integral-properties-integration-by-parts} and \ref{thm:young-integral-properties_pw-convergence}, and the dominated convergence theorem, the integral \eqref{eq: integralWithG} converges to the desired limit after we relabel $s\mapsto t$.

Next, we send $\varepsilon\to0$: The left-hand side of \eqref{eq:mollified-transport-integrated} converges to $\iint \eta(u)\partial_t \phi\,dx\,dt + \iint \eta(u)\,d_x(\phi b)\,dt$. Indeed, the first integral converges by Lebesgue's dominated convergence theorem, and the second converges due to Theorem~\ref{thm:young-integral-properties}~\ref{thm:young-integral-properties_pw-convergence}.

As for the right-hand side of \eqref{eq:mollified-transport-integrated}, we claim that it vanishes as $\eps\to0$. Denoting $\eta_\eps' \coloneqq \eta'([u]^\eps)$, we get
\begin{align*}
&\iint \phi r_{\eps}\eta_\eps'\,dx\,dt
= \iiint \rho_\eps(x-y)\bigl(b(x,t)-b(y,t)\bigr)\eta_\eps'(x,t)\phi(x,t)\,d_yu(y,t)\,dx\,dt \\
\intertext{(\emph{integration by parts, Theorem \ref{thm:young-integral-properties} \ref{thm:young-integral-properties-integration-by-parts} and \ref{thm:young-integral-properties_product-rule}})}
&\quad = \iiint (\partial_x\rho_\eps)(x-y)\bigl(b(x,t)-b(y,t)\bigr)\eta_\eps'(x,t)\phi(x,t)u(y,t)\,dy\,dx\,dt \\*
&\qquad + \iiint \rho_\eps(x-y) \eta_\eps'(x,t)\phi(x,t)u(y,t)\,d_yb(y,t)\,dx\,dt \\
\intertext{(\emph{add/subtract $u(x,t)$ in the first integral; apply Fubini in the second})}
&\quad = \iiint (\partial_x\rho_\eps)(x-y)\bigl(b(x,t)-b(y,t)\bigr)\bigl(u(y,t)-u(x,t)\bigr)\eta_\eps'(x,t)\phi(x,t)\,dy\,dx\,dt \\*
&\qquad + \iiint (\partial_x\rho_\eps)(x-y)\bigl(b(x,t)-b(y,t)\bigr)u(x,t)\eta_\eps'(x,t)\phi(x,t)\,dy\,dx\,dt \\*
&\qquad + \iint\Biggl(\int \rho_\eps(x-y) \eta_\eps'(x,t)\phi(x,t)\,dx\Biggr)u(y,t)\,d_yb(y,t)\,dt \\
&\quad\eqqcolon A_\eps+B_\eps+C_\eps.
\end{align*}
We start by estimating $A_\eps$. Let $R>0$ be such that $\supp\phi\subset[-R,R]\times(0,T)$, and let $p,q\geq1$ be such that $1/\theta\coloneqq 1/p+1/q>1$, and $\sup_{t\in[0,T]} |u(t)|_{V^p([-R,R])}<\infty$ and $\sup_{t\in[0,T]}|b(t)|_{V^q([-R,R])}\lesssim 1+\lambda(t)$ for some $\lambda\in L^1([0,T])$ (see Corollary~\ref{cor: finitePVariationOfTheVelocityB}). Then, by \eqref{eq:double-translation-estimate} in Lemma~\ref{lem:l1-translation-error-estimate},
\begin{align*}
|A_\eps| &\leq \|\partial_x\rho_\eps\|_{L^\infty} \|\eta'\|_{L^\infty} \|\phi\|_{L^\infty} \int_{-\eps}^\eps\int_0^T \int_{-R}^R \begin{aligned}[t]&\bigl|b(x-z,t)-b(x,t)\bigr|\\
&\times\bigl|u(x-z,t)-u(x,t)\bigr|\, dx\,dt\,dz\end{aligned} \\
&\lesssim \eps^{-2} \int_{-\eps}^\eps\int_0^T |z| \big|u(t)\big|_{V_{\eps}^\infty}^{1-\theta}\bigl|b(t)\bigr|_{V_{\eps}^\infty}^{1-\theta} \big|u(t)\big|_{V^p}^\theta \big|b(t)\big|_{V^q}^\theta\,dt\,dz \\
&\lesssim \int_0^T \big|b(t)\big|_{V_{\eps}^\infty}^{1-\theta}\bigl(1+\lambda(t)\bigr)^\theta\,dt \\
&\to 0
\end{align*}
as $\eps\to0$, since $x\mapsto b(x,t)$ is continuous for a.e.~$t$.

For $B_\eps$ we get
\begin{align*}
B_\eps &= \iint \Biggl(\underbrace{\int (\partial_x\rho_\eps)(x-y)\,dy}_{=\,0}\Biggr)b(x,t) u(x,t)\eta_\eps'(x,t)\phi(x,t)\,dx\,dt \\
&\quad - \iint \Biggl(\underbrace{\int (\partial_x\rho_\eps)(x-y)b(y,t)\,dy}_{=\,\partial_x [b]^\eps(x)}\Biggr) u(x,t)\eta_\eps'(x,t)\phi(x,t)\,dx\,dt \\
&= - \iint \partial_x [b]^\eps u \eta_\eps'\phi\,dx\,dt,
\end{align*}
and for $C_\eps$ we get
\begin{align*}
C_\eps &= \iint \Biggl(\iint \rho_\eps(x-y) \eta_\eps'(x,t)\phi(x,t)\,dx\Biggr)u(y,t)\,d_yb(y,t)\,dt \\
&= \iint \bigl[\eta_\eps'\phi\bigr]^\eps u\,d_yb(y,t)\,dt.
\end{align*}
Using the ``dominated convergence theorem'' in Theorem \ref{thm:young-integral-properties} \ref{thm:young-integral-properties_pw-convergence}, we get
\begin{align*}
&\lim_{\eps\to0} \iint r_\eps \eta'([u]^\eps)\phi\,dx\,dt
= \lim_{\eps\to0}\big( A_\eps+B_\eps+C_\eps\big) \\
&\qquad = -\iint \eta'(u)u\phi\,d_xb(x,t)\,dt + \iint \eta'(u)\phi u\,d_xb(x,t)\,dt = 0.
\end{align*}

Finally, if $\eta$ is not $C^2$ but merely Lipschitz, we approximate it uniformly by a sequence of $C^2$ functions. Since the integrands in the corresponding weak formulations converge uniformly, Theorem \ref{thm:young-integral-properties} \ref{thm:young-integral-properties_pw-convergence} ensures convergence of each integral, proving that also $\eta\circ u$ is a weak solution.
\end{proof}

\subsubsection{$L^1$ stability of solutions}\label{sec:uniqueness-and-stability}
\begin{theorem}\label{thm:uniqueness-and-stability}
Let $b$ satisfy the conditions of Assumption \ref{ass:b-conditions}, let $T>0$, and let $u_1,u_2$ be two weak solutions of the transport equation \eqref{eq:transport1d}. Then for any $R>0$, and $t\in[0,T]$,
\begin{equation}
\bigl\|u_1(t)-u_2(t)\bigr\|_{L^1([-R,R])} \leq \Psi\Bigl(\bigl\|u_1(0)-u_2(0)\bigr\|_{L^1([-R-Mt,R+Mt]},t\Bigr)
\end{equation}
for some modulus of continuity $\Psi$, and with $M\coloneqq\|b\|_{L^\infty}$.
In particular, the equation has at most one solution for any initial data $u_0\in V^p(\R)$.
\end{theorem}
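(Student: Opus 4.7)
The approach combines the renormalization property, the nonlinear bound of Theorem~\ref{thm: controlOnIntegralWithPVariationandOnesidedRegularity}, and the Osgood inequality, following the strategy sketched in Section~\ref{sec: sketchOfTheUniquenessArgument}. By linearity, $w\coloneqq u_1-u_2$ is a weak solution of \eqref{eq:transport1d} with initial data $w_0=u_1(0)-u_2(0)$, and Theorem~\ref{thm:renormalizability} implies that $|w|$ is also a weak solution, with initial data $|w_0|$. The first step is a finite-speed-of-propagation localization to the backward cone $\{(x,s):|x|\leq R+M(t-s)\}$. Fix $t\in(0,T]$ and let $M=\|b\|_{L^\infty}$. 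Set $\chi_\delta(x,s)\coloneqq \eta_\delta(|x|-R-M(t-s))$, where $\eta_\delta\from\R\to[0,1]$ smoothly interpolates from $1$ on $(-\infty,0]$ to $0$ on $[\delta,\infty)$; by construction $\chi_\delta$ is smooth (it is constantly $1$ near $x=0$), supported in an $x$-interval of length $\leq 2(R+Mt+\delta)$, and has $|\chi_\delta(\cdot,s)|_{TV}\leq 2$. Let $\psi_\varepsilon$ be a smooth approximation of $\mathbbm{1}_{[0,t]}$. Inserting $\phi=\chi_\delta\psi_\varepsilon$ into the weak formulation for $|w|$, splitting the transport term by the product rule $d_x(b\chi_\delta)=\chi_\delta\,d_xb+b\partial_x\chi_\delta\,dx$ (Theorem~\ref{thm:young-integral-properties}~\ref{thm:young-integral-properties_product-rule}), and passing to the limit $\varepsilon\to 0$ using the temporal continuity from Proposition~\ref{prop:time-continuity}, I expect to arrive at
\[
\int|w(x,t)|\chi_\delta(x,t)\,dx\leq \int|w_0|\chi_\delta(x,0)\,dx+\int_0^t\!\!\int|w|\chi_\delta\,d_xb\,ds,
\]
the residual term $\iint|w|(\partial_s\chi_\delta+b\partial_x\chi_\delta)\,dx\,ds$ being non-positive because $\partial_s\chi_\delta+b\partial_x\chi_\delta=\eta_\delta'(M+b\sign(x))\leq 0$ (since $\eta_\delta'\leq 0$ and $|b|\leq M$).

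Denoting $y_\delta(s)\coloneqq\int|w(x,s)|\chi_\delta(x,s)\,dx$, the second step applies Theorem~\ref{thm: controlOnIntegralWithPVariationandOnesidedRegularity} to $\gamma=|w(\cdot,s)|\chi_\delta(\cdot,s)$ and $\beta=b(\cdot,s)$. A uniform $V^p$ bound on $\gamma$ over $(s,\delta)\in[0,T]\times(0,1)$ follows from Definition~\ref{def: definitionOfWeakSolution}~\ref{def: definitionOfWeakSolution-finite-variation} and Proposition~\ref{prop:p-variation}~\ref{prop:p-variation-product-rule}. Since the modulus of $b(\cdot,s)$ is $\lambda(s)\omega_b$, the associated $\omega^*$ scales as $\lambda(s)\omega_b^*$, and the monotonicity of $C\mapsto C\omega_b^*(y/C)$ (noted in the remark after Theorem~\ref{thm: controlOnIntegralWithPVariationandOnesidedRegularity}) allows me to absorb everything into a single constant $C_*$ independent of $(s,\delta)$, giving
\[
\int|w|\chi_\delta\,d_xb\leq \lambda(s)\,C_*\,\omega_b^*\!\bigl(y_\delta(s)/C_*\bigr).
\]
Substituting back yields the Osgood-type integral inequality
\[
y_\delta(t)\leq y_\delta(0)+\int_0^t\lambda(s)\,C_*\,\omega_b^*\bigl(y_\delta(s)/C_*\bigr)\,ds,
\]
and since $\omega_b^*$ is itself an Osgood modulus (remark after Theorem~\ref{thm: controlOnIntegralWithPVariationandOnesidedRegularity}), Lemma~\ref{lem:osgood} gives $y_\delta(t)\leq\Psi(y_\delta(0),\Lambda(t))$ for some $\Psi$ with $\Psi(0,\cdot)\equiv 0$. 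Dominated convergence on the $L^1$ terms (noting $\chi_\delta(\cdot,t)\to\mathbbm{1}_{[-R,R]}$ and $\chi_\delta(\cdot,0)\to\mathbbm{1}_{[-R-Mt,R+Mt]}$ pointwise) converts this to the stability bound announced in the theorem, and uniqueness is immediate because $\Psi(0,\cdot)=0$.

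The main obstacle is the passage to the limit $\varepsilon\to 0$ together with the justification of the product-rule manipulation for the Young integral: this requires maintaining uniform $p$- and $q$-variation bounds for $|w|$, $\chi_\delta$, and $b$ (with $q\in[1,p/(p-1))$ from Corollary~\ref{cor: finitePVariationOfTheVelocityB}) so that each Young integral is well-defined (Theorem~\ref{thm:young-integral-properties}~\ref{thm:young-integral-properties-well-defined}), stable under smooth approximation (Theorem~\ref{thm:young-integral-properties}~\ref{thm:young-integral-properties_pw-convergence}), and compatible with iterated integration in $(s,x)$ (Lemma~\ref{lem:fubini}). The other delicate point is the persistence of the Osgood property under the transformation $\omega_b\mapsto\omega_b^*$, as this is exactly what makes the modulus $\Psi$ vanish at $0$ and thereby delivers uniqueness.
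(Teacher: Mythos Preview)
Your proposal is correct and follows essentially the same route as the paper's proof: linearity plus renormalization to reduce to $|w|$, a backward-cone cutoff $\chi_\delta$ (the paper's $\xi((|x|-R-M(T-t))/\eps)$), the product-rule split of $d_x(b\chi_\delta)$ and the sign observation $\eta_\delta'(M+b\sign x)\leq 0$, then Theorem~\ref{thm: controlOnIntegralWithPVariationandOnesidedRegularity} followed by the Osgood inequality. The only cosmetic differences are that the paper passes to the truncated-in-time weak formulation directly via Proposition~\ref{prop:time-continuity} (rather than via an auxiliary $\psi_\varepsilon$), and handles the $\lambda(s)$-scaling of $\omega_b^*$ slightly differently (the $c_p h$ term in $\omega^*$ does not pick up the factor $\lambda(s)$, so one should write the bound as $(1+\lambda(s))\widetilde\omega$ or absorb constants accordingly); neither affects the argument.
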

\begin{proof}
Let $u_1,u_2$ be two solutions of the equation, so that $u\coloneqq u_1-u_2$ satisfies the same equation but with $u(\cdot,0)=u_0\coloneqq u_1(0)-u_2(0)$. By Theorem \ref{thm:renormalizability}, the function $|u|$ satisfies the same equation. The weak formulation for $|u|$ is then
\[
\int_0^\infty \int_\R |u|\partial_t \phi\,dx\,dt + \int_0^\infty\int_\R |u|\,d_x(b\phi)\,dt + \int_\R |u_0|\phi(0)\,dx = 0
\]
for any $\phi\in C_c^\infty(\R\times[0,\infty))$.
By using the $L^1$ time continuity of $u$ from Proposition~\ref{prop:time-continuity}, one can easily show that the above is equivalent to
\begin{equation}\label{eq:weakFormulationTruncated}
\int_0^T \int_\R |u|\partial_t \phi\,dx\,dt + \int_0^T\int_\R |u|\,d_x(b\phi)\,dt = \int_\R |u|\phi(T)\,dx - \int_\R |u_0|\phi(0)\,dx
\end{equation}
for any $\phi\in C_c^\infty(\R\times[0,T])$ and any $T>0$. The second integral can be rewritten using Theorem \ref{thm:young-integral-properties}~\ref{thm:young-integral-properties_product-rule} to get
\begin{align*}
\int_0^T\int_\R |u|\,d_x(b\phi)\,dt
&= \int_0^T\int_\R |u|\phi\,d_xb\,dt + \int_0^T\int_\R |u| b \partial_x\phi\,dx\,dt.
\end{align*}
Pick any $M \geq \|b\|_{L^\infty}$ and $R>0$, and let $0\leq \xi\in C^\infty(\R)$ satisfy $\xi|_{(-\infty,0]}\equiv 1$, $\xi|_{[1,\infty)}\equiv 0$ and $\xi'\leq 0$. Pick some $\eps>0$ and set
\[
\phi(x,t) \coloneqq \xi\biggl(\frac{|x|-R-M(T-t)}{\eps}\biggr) \qquad\text{for } x\in\R,t\in[0,T]
\]
so that $\supp\phi\subset[-\frac{L}{2},\frac{L}{2} ]\times[0,T]$ where $\frac{L}{2}\coloneqq R+MT$. We have now
\begin{align*}
\int_0^T \int_\R |u|\partial_t \phi\,dx\,dt
&= \int_0^T \int_\R |u|\frac{M}{\eps}\xi'\,dx\,dt
\intertext{and}
\int_0^T\int_\R |u| b \partial_x\phi\,dx\,dt
&= \int_0^T\int_\R |u| b \frac{\sign(x)}{\eps}\xi'\,dx\,dt.
\end{align*}
Hence, continuing from \eqref{eq:weakFormulationTruncated}, we get
\begin{align*}
&\int_\R |u|\phi(T)\,dx - \int_\R|u_0|\phi(0)\,dx \\
&\qquad= \int_0^T \int_\R \biggl(\frac{M+b\sign(x)}{\eps}\biggr)|u|\xi'\,dx\,dt + \int_0^T\int_\R |u|\phi\,d_xb\,dt \\
&\qquad\leq \int_0^T\int_\R |u|\phi\,d_xb\,dt
\end{align*}
(since $M\geq\|b\|_{L^\infty}$, and since $\xi'\leq 0$). By assumption~\ref{def: definitionOfWeakSolution-finite-variation} of Definition~\ref{def: definitionOfWeakSolution}, there is some $p\in[1,\infty)$ such that $\sup_{0\leq t\leq T}|u(t)|_{V^p([-\frac{L}{2},\frac{L}{2}])}<\infty$. Hence, for any $\eps>0$ and $t\in[0,T]$, the function $\gamma(x)\coloneqq |u(x,t)|\phi(x,t)$ has bounded $L^1$ norm and $V^p$ seminorm, and satisfies Assumption \ref{ass:b-conditions}. By Theorem~\ref{thm: controlOnIntegralWithPVariationandOnesidedRegularity} with $\gamma=|u(t)|\phi(t)$ and $\beta=b(t)$, we can then estimate further
\[
\int_0^T\int_\R |u|\phi\,d_xb\,dt \leq
\int_0^T \lambda(t)\widetilde\omega_b\big(\|u\phi(t)\|_{L^1}\big)\,dt
\]
where $\lambda\in L^1_\loc$ is the coefficient appearing in \eqref{eq: theOsgoodCondition}, and where $\widetilde\omega_b$ is the Osgood modulus
\[
\widetilde\omega_b(h) = \frac{k^{2p-1}}{h^{2p-2}}\omega_b\bigg(\frac{h^{2p-1}}{k^{2p-1}}\bigg) + c_ph,
\]
with $k\coloneqq \esssup_{t\in[0,T)}\bigl||u|\phi(t)\bigr|_{V^p}L^{1-\frac{1}{p}}$. Note that $k$ is finite because
\[
\bigl||u|\phi(t)\bigr|_{V^p} \leq \|u\|_{L^\infty([-\frac{L}{2},\frac{L}{2}])}\sup_{t\geq0}|\phi(t)|_{TV} + \esssup_{0\leq s<T}|u(s)|_{V^p([-\frac{L}{2},\frac{L}{2}])}\|\phi\|_{L^\infty} < \infty,
\]
for a.e.~$t\in[0,T)$. We obtain
\[
\int_\R |u|\phi(x,T)\,dx \leq \int_\R|u_0|\phi(x,0)\,dx +
\int_0^T \lambda(t)\widetilde\omega_b\big(\|u\phi(t)\|_{L^1}\big)\,dt
\]
for a fixed Osgood modulus $\widetilde\omega_b$. By the Osgood inequality, Lemma \ref{lem:osgood}, we get
\[
\int_\R |u|\phi(x,T)\,dx \leq \Psi\Biggl(\int_\R |u_0|\phi(x,0)\,dx,\,T\Biggr)
\]
for a modulus of continuity $\Psi(\cdot,T)$. In particular, passing $\eps\to0$ yields
\[
\int_{-R}^R |u|(x,T)\,dx \leq \Psi\Biggl(\int_{-R-MT}^{R+MT} |u_0(x,0)|\,dx,\,T\Biggr).
\]
\end{proof}

\section{The backwards problem}\label{sec:backwards1d}
In this section we treat the \emph{backwards problem} for the one-dimensional transport equation,
\begin{equation}\label{eq:backwardsTransportEqn}
\begin{cases}
    \partial_t u + b\partial_x u = 0 & \text{for } x\in\R, t\in(0,T), \\
    u(\cdot,T) = u_T
\end{cases}
\end{equation}
for some Osgood velocity field $b\from\R\times(0,T)\to\R$ and some terminal data $u_T\from\R\to\R$. The challenges for this problem are distinct from those of the forwards problem: Unlike \eqref{eq:transport1d}, there is no risk of discontinuities spontaneously appearing in the solution, but on the downside, there \emph{is} a risk of non-uniqueness of solutions. These issues have been thoroughly investigated for one-sided Lipschitz velocities; see e.g.~\cite{BouJam97}.

\begin{theorem}\label{thm:backwards1d-well-posed}
Let $b$ and $u_T$ satisfy Assumptions \ref{ass:b-conditions} and \ref{ass:uzero-conditions} (with $u_T$ in place of $u_0$), respectively. Let $X=X_t(x,s)$ be the flow of the velocity field $b$. Then the function
\begin{equation}\label{eq:backwardsSolution}
    u(x,t)\coloneqq u_T\bigl(X_T(x,t)\bigr) \qquad \text{for } (x,t)\in\R\times[0,T]
\end{equation}
is a weak solution of the backwards problem \eqref{eq:backwardsTransportEqn}. It is the only weak solution of \eqref{eq:backwardsTransportEqn} that is stable under smooth approximation: If $u^{\eps,\delta}$ solves the equation with mollified velocity $[b]^\eps$ and terminal data $[u_T]^\delta$, then $u^{\eps,\delta}\to u$ pointwise~a.e.\ as $\eps,\delta\to0$.
\end{theorem}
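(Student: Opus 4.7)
My plan is to mirror the existence proof for the forward problem (Theorem~\ref{thm:existence-of-solution}), adapted to the terminal-value setting. First I would define $u$ by the formula~\eqref{eq:backwardsSolution}. Let $b^\eps=[b]^\eps$ and $u_T^\delta=[u_T]^\delta$ be spatial mollifications, let $X^\eps$ denote the (smooth) flow of $b^\eps$, and introduce the smooth approximants
\[
u^{\eps,\delta}(x,t) \coloneqq u_T^\delta\bigl(X_T^\eps(x,t)\bigr).
\]
Since $b^\eps$ is Lipschitz in $x$, the map $(x,t)\mapsto X_T^\eps(x,t)$ is $C^1$ and satisfies $\partial_t X_T^\eps + b^\eps\partial_x X_T^\eps = 0$ (method of characteristics), so $u^{\eps,\delta}$ is a classical solution of the backwards equation with velocity $b^\eps$ and terminal data $u_T^\delta$, hence trivially a weak Riemann--Stieltjes solution. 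The goal is to pass to the limit $\eps,\delta\to0$ in this weak formulation.

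For the regularity condition in Definition~\ref{def: definitionOfWeakSolution}, I use that $x\mapsto X_T(x,t)$ is continuous and monotone-increasing by Theorem~\ref{thm:ode-well-posed}, and that $X_T([-R,R],t)\subseteq[-R-MT,R+MT]$ with $M\coloneqq\|b\|_{L^\infty}$. Composition with a continuous monotone map does not increase $p$-variation, so
\[
|u(t)|_{V^p([-R,R])} \leq |u_T|_{V^p([-R-MT,R+MT])},
\]
which is finite for some $p$ by Assumption~\ref{ass:uzero-conditions}. The same argument, combined with the standard estimate $|u_T^\delta|_{V^p}\leq |u_T|_{V^p}$ for convolution, yields uniform $V^p$-bounds on $u^{\eps,\delta}(t)$ on compacts. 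For the velocity, Corollary~\ref{cor: finitePVariationOfTheVelocityB} gives uniform $V^q$-bounds on $b^\eps\varphi$ for any $q\in(1,\tfrac{p}{p-1})$, so that $1/p+1/q>1$.

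For pointwise convergence, Theorem~\ref{thm:ode-well-posed} gives $X_T^\eps\to X_T$ uniformly on compacts, and $u_T^\delta\to u_T$ at every continuity point of $u_T$. Since $u_T$ has at most countably many discontinuities $D$, and $X_T(\cdot,t)$ is continuous, monotone and surjective, the preimage $X_T(\cdot,t)^{-1}(D)$ is Lebesgue null outside the (countable, possibly empty) union of flat intervals of $X_T(\cdot,t)$ whose constant value lies in $D$; standard bookkeeping shows that even on these exceptional intervals convergence may be arranged up to a null set. Hence $u^{\eps,\delta}(x,t)\to u(x,t)$ for a.e.~$x$ (for each fixed $t$). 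Applying Theorem~\ref{thm:young-integral-properties}~\ref{thm:young-integral-properties_pw-convergence} slicewise in $t$ (using the uniform $V^p$ and $V^q$ bounds plus pointwise convergence on a dense set) gives convergence of the Riemann--Stieltjes integrals, and Theorem~\ref{thm:young-integral-properties}~\ref{thm:young-integral-properties_size-control} together with dominated convergence handles the outer $t$-integrals. We conclude that $u$ satisfies the weak formulation of~\eqref{eq:backwardsTransportEqn}.

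The uniqueness statement is immediate from this: any weak solution $v$ that is the pointwise a.e.~limit of the sequence $u^{\eps,\delta}$ must coincide a.e.~with that limit, i.e., with $u$. The main obstacle is the technical coordination in passing to the limit in the Riemann--Stieltjes integrals; in particular, verifying that the uniform $V^p$-bound on $u^{\eps,\delta}$ survives the mollification of $u_T$ and composition with the approximate flow, and carefully handling the potential merging of characteristics allowed by the one-sided Osgood condition (which may produce flat intervals of $X_T(\cdot,t)$).
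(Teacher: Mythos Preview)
Your proposal is correct and mirrors the paper's own argument: the paper's proof simply says to follow Theorem~\ref{thm:existence-of-solution} with the forward flow $X_T(x,t)$ in place of the backward flow $X_t^{-1}(x)$, noting that this case is in fact \emph{simpler} because $X_T(\cdot,t)$ is continuous (so there is no analogue of the exceptional set $E_t$). Your added concern about flat intervals of $X_T(\cdot,t)$ hitting discontinuities of $u_T$ is not discussed in the paper, which treats the backward case as strictly easier than the forward one.
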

\begin{proof}
The proof that \eqref{eq:backwardsSolution} solves \eqref{eq:backwardsTransportEqn} follows the proof of Theorem \ref{thm:existence-of-solution} closely, but with $X_T(x,t)$ in place of $X_t^{-1}(x)$. (In fact, the proof is simpler, since the forwards flow is continuous, as opposed to the backwards flow.) The fact that this solution is stable under mollification of $u_T$ and $b$ also follows from that proof, as does the claim that it is unique in being stable.
\end{proof}

\section{The forwards, multi-dimensional problem}\label{sec:multi-d}

In this section we treat the multi-dimensional transport equation
\begin{equation}\label{eq:transport-eq-multi-d}
\partial_t u + b\cdot\nabla u = 0, \qquad u(\cdot,0)=u_0
\end{equation}
for some $b\from\R^d\times\R_+\to\R^d$.

\begin{assumption}[The velocity field]\label{ass:b-conditions-multi-d}
We assume that $b\from\R^d\times\R_+\to\R^d$ is measurable and that it satisfies the following properties.
\begin{enumerate}[label=(\Alph*)]
\item Boundedness: $b$ is bounded.
\item Log-Lipschitz continuity: For all distinct points $x,y\in\R^d$ and all $t>0$,
\begin{equation}\label{eq:log-lipschitz-condition}
\frac{\bigl|(b(x,t)-b(y,t))\cdot(x-y)\bigr|}{|x-y|} \leq \ell(|x-y|)
\end{equation}
where
\begin{equation}\label{eq:log-lipschitz-modulus}
\ell(h)\coloneqq\begin{cases}
        h\abs{\log(h)} & \text{for } h\in[0,e^{-1}),\\
        e^{-1} & \text{for } h\geq e^{-1}.
    \end{cases}
\end{equation}
\end{enumerate}
\end{assumption}

\begin{assumption}[The initial data]\label{ass:uzero-conditions-multi-d}
We assume that $u_0\from\R^d\to\R$ is locally H\"older continuous, i.e.~for every compact $K\subset\R^d$ there is some $\alpha\in(0,1)$ such that $u_0\bigr|_K\in C^{0,\alpha}(K)$.
\end{assumption}

\begin{definition}\label{def: definitionOfWeakSolutionInMultipleDimensions}
Let $b$ satisfy Assumption \ref{ass:b-conditions-multi-d} and let $u_0$ satisfy Assumption \ref{ass:uzero-conditions-multi-d}. We then say that $u\in L^\infty(\R^d\times[0,\infty))$ is a \emph{weak solution} of \eqref{eq:transport-eq-multi-d} if
\begin{enumerate}[label=(\roman*)]
    \item it is locally H\"older continuous, i.e.~for every compact $K\subset\R^d\times[0,\infty)$ there is some $\alpha\in(0,1]$ such that $u\bigr|_{K}\in C^{0,\alpha}(K)$
    \item for every $\phi\in C_c^\infty(\R^d\times[0,\infty))$ we have
    \begin{equation}\label{eq:weak-solution-multi-d}
    \begin{split}
        &\int_0^\infty \int_{\R^d} u\partial_t\phi\,dx\,dt + \sum_{i=1}^d\int_0^\infty\int_{\R^{d-1}}\int_\R u\,d_{x_i}(\phi b_i)\,d\hat{x}_i\,dt \\
        &\qquad + \int_{\R^d}\phi(x,0)u_0(x)\,dx = 0
    \end{split}
    \end{equation}
    (as in \eqref{eq: firstPlaceWhereXhatAppears}, we denote $\hat{x}_i = (x_1,\dots,x_{i-1},x_{i+1},\dots,x_d)$).
\end{enumerate}
\end{definition}

The main theorem of this section is the following:
\begin{theorem}\label{thm:uniquenessx-and-stability-multi-d}
    Under Assumptions \ref{ass:b-conditions-multi-d} and \ref{ass:uzero-conditions-multi-d}, the function
    \begin{equation}\label{eq:canonical-solution-multi-d}
    u(x,t)\coloneqq u_0\bigl(X_0(x,t)\bigr), \qquad x\in\R^d,\ t\geq0
    \end{equation}
    is the unique weak solution (in accordance with Definition \ref{def: definitionOfWeakSolutionInMultipleDimensions}) of \eqref{eq:transport-eq-multi-d}. It is stable with respect to $u_0$ in $L^1_\loc$.
\end{theorem}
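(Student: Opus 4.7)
The plan is to follow the one-dimensional scheme of Theorems~\ref{thm:existence-of-solution}, \ref{thm:renormalizability}, and~\ref{thm:uniqueness-and-stability} closely, with the log-Lipschitz hypothesis of Assumption~\ref{ass:b-conditions-multi-d} playing two roles: it guarantees that the (forward and backward) flow of $b$ is H\"older continuous on bounded time intervals, and it implies that each component $b_i$ is one-dimensional log-Lipschitz in $x_i$ uniformly in $\hat{x}_i$ (just plug $y = x + he_i$ into~\eqref{eq:log-lipschitz-condition}), so that each slice fits Assumption~\ref{ass:b-conditions} with modulus $\omega_b = \ell$.

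\emph{Existence.} Feeding the log-Lipschitz bound into~\eqref{eq:flowstability} and Lemma~\ref{lem:osgood} yields, on any bounded time interval $[0,T]$, a local estimate of the form $|X_t(x,s)-X_t(y,s)| \leq |x-y|^{e^{-C|t-s|}}$ for $|x-y|$ small, so both the forward and backward flows are locally $\alpha(T)$-H\"older for some $\alpha(T)>0$. Composing with $u_0\in C^{0,\beta}_{\loc}$ shows that the function $u$ in~\eqref{eq:canonical-solution-multi-d} is locally H\"older in $x$; joint H\"older continuity in $(x,t)$ follows from boundedness of $b$, which makes $t\mapsto X_0(x,t)$ Lipschitz. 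To verify~\eqref{eq:weak-solution-multi-d}, I mollify $b$ to $[b]^\eps$ (whose log-Lipschitz constants are preserved under convolution), solve the smooth equation classically to obtain $u^\eps(x,t)\coloneqq u_0(X_0^\eps(x,t))$, and pass to the limit. The pointwise convergence $u^\eps\to u$ follows from Theorem~\ref{thm:ode-well-posed}, while the uniform local H\"older control on $u^\eps$ gives uniform slice $p$-variation bounds (via Proposition~\ref{prop:p-variation}~\ref{prop:p-variation-holder}); combined with the uniform slice $q$-variation bound on $[b_i]^\eps\varphi$ from Corollary~\ref{cor: finitePVariationOfTheVelocityB}, Theorem~\ref{thm:young-integral-properties}~\ref{thm:young-integral-properties_pw-convergence} passes each inner integral $\int_\R u^\eps\,d_{x_i}([b_i]^\eps\varphi)$ to the limit, and Lemma~\ref{lem:fubini} together with Theorem~\ref{thm:young-integral-properties}~\ref{thm:young-integral-properties_size-control} provides a dominant function for the outer $\hat{x}_i$-integration.

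\emph{Uniqueness and stability.} Given two weak solutions $u_1,u_2$, the difference $u\coloneqq u_1-u_2$ is again a weak solution. A renormalization argument mirroring Theorem~\ref{thm:renormalizability}, carried out one coordinate at a time (the commutator in the $x_i$-direction is handled by Lemma~\ref{lem:l1-translation-error-estimate} applied slicewise, with the other variables treated as parameters via Lemma~\ref{lem:fubini}), shows that $\abs{u}$ is also a weak solution. Testing against the cutoff $\phi(x,t) = \xi_\eps(\abs{x}-R-M(T-t))$ with $M\geq\|b\|_{L^\infty}$ and $\xi_\eps$ a smoothened indicator, the contribution from $\partial_t\phi$ cancels the Lebesgue-integrable part of $\Div(\phi b)$ up to a non-positive boundary term exactly as in the proof of Theorem~\ref{thm:uniqueness-and-stability}, leaving
\begin{equation*}
\int_{\R^d}\abs{u(T)}\phi(T)\,dx - \int_{\R^d}\abs{u_0}\phi(0)\,dx \leq \sum_{i=1}^d\int_0^T\int_{\R^{d-1}}\int_\R \abs{u}\phi\,d_{x_i}b_i\,d\hat{x}_i\,dt.
\end{equation*}
Local H\"older continuity of $u$ gives finite $p$-variation of $x_i\mapsto\abs{u(x,t)}\phi(x,t)$ on $[-R-MT,R+MT]$, uniformly in $\hat{x}_i$ and $t$. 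Applying Theorem~\ref{thm: controlOnIntegralWithPVariationandOnesidedRegularity} slicewise and integrating over $\hat{x}_i$ (using Jensen's inequality and the concavity of $\omega^*$ built from $\omega_b=\ell$), I obtain
\begin{equation*}
\int_{\R^d}\abs{u(T)}\phi(T)\,dx \leq \int_{\R^d}\abs{u_0}\phi(0)\,dx + C\int_0^T \widetilde\omega\bigl(\|u\phi(t)\|_{L^1}\bigr)\,dt
\end{equation*}
for some Osgood modulus $\widetilde\omega$, and Lemma~\ref{lem:osgood} delivers $\|u(T)\|_{L^1(B_R)} \leq \Psi\bigl(\|u_0\|_{L^1(B_{R+MT})},T\bigr)$ after sending $\eps\to0$, yielding both uniqueness and $L^1_{\loc}$ stability.

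\emph{Main obstacle.} The central difficulty, absent in one dimension, is that the backward flow is not monotone, so one cannot appeal to pushforward-of-variation arguments to give $u$ the $p$-variation regularity demanded by the Riemann--Stieltjes integrals in~\eqref{eq:weak-solution-multi-d}. The log-Lipschitz hypothesis is essential precisely because it produces a quantitative H\"older estimate on the flow with exponent $e^{-Ct}>0$, which is transferred to $u$ and in turn forces finite $p$-variation on every coordinate slice; losing this would break the very definition of a weak solution. A secondary technical point is ensuring that the slicewise bounds in the renormalization and uniqueness arguments are uniform in the transverse variables $\hat{x}_i$, which is where joint (rather than merely directional) H\"older continuity of $u$ is used.
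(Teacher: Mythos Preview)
Your overall architecture matches the paper's, but the renormalization step contains a genuine gap. After full $d$-dimensional mollification and the same $A_\eps+B_\eps+C_\eps$ split as in Theorem~\ref{thm:renormalizability}, the term you must kill is
\[
A_\eps = \iiint (b(x,t)-b(y,t))\cdot\nabla_x\rho_\eps(x-y)\,\bigl(u(y,t)-u(x,t)\bigr)\,\eta'_\eps(x,t)\phi(x,t)\,dy\,dx\,dt.
\]
Your plan is to treat each coordinate $i$ separately via Lemma~\ref{lem:l1-translation-error-estimate}. But the $i$-th summand contains $b_i(x,t)-b_i(y,t)$ with $y$ varying in \emph{all} coordinates, and Assumption~\ref{ass:b-conditions-multi-d} gives no modulus for $b_i$ in the transverse directions $\hat{x}_i$ beyond bare continuity (cf.~Remark~\ref{rem: onRegularityAssumptionsInMultiD}~\ref{rem:multi-d-b-continuous}: the inner-product log-Lipschitz condition does \emph{not} force each $b_i$ to be log-Lipschitz). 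After bounding $|u(y)-u(x)|\lesssim\eps^\alpha$, the slicewise estimate leaves a term of order $\eps^{\alpha-1}\sup_{|z|\leq\eps}|b_i(x)-b_i(x-z)|$, which need not vanish when $\alpha$ is small.

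The paper instead takes a \emph{radially symmetric} mollifier, so that $\nabla_x\rho_\eps(x-y)=\eps^{-d-1}\rho'(\eps^{-1}|x-y|)\tfrac{x-y}{|x-y|}$ and the sum over $i$ collapses to the inner product $(b(x)-b(y))\cdot(x-y)/|x-y|$, which is exactly what \eqref{eq:log-lipschitz-condition} controls. Combined with the H\"older bound on $u$ one gets $|A_\eps|\lesssim\eps^{-1}\int_0^\eps|\log h|\,h^\alpha\,dh\to0$ for every $\alpha>0$. The radial mollifier is not cosmetic here; it is what converts the hypothesis into a usable pointwise bound.

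A smaller point: you invoke ``the concavity of $\omega^*$'', but the remark following Theorem~\ref{thm: controlOnIntegralWithPVariationandOnesidedRegularity} warns that $\omega^*$ need not be concave. The paper handles this by exploiting the explicit form of $\ell$ to show $\widetilde\omega(h)\leq\widehat\omega(h)\coloneqq(2p-1)k\,\ell(h/k)+c_ph$, and it is the concave $\widehat\omega$ to which Jensen is applied across the $\hat{x}_i$-integration.
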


\begin{remark}\label{rem: onRegularityAssumptionsInMultiD} ~
\begin{enumerate}[label=(\roman*)]
\item
Comparing the regularity condition \eqref{eq:log-lipschitz-condition} with the one-dimensional counterpart \eqref{eq: theOsgoodCondition}, it is evident that \eqref{eq:log-lipschitz-condition} is much stricter: It requires a specific Osgood modulus, and it is a \emph{two-sided condition}. In fact, with some rather easy modifications to the proof, the condition \eqref{eq:log-lipschitz-condition} can be replaced by the non-symmetric two-sided condition
\begin{equation}\label{eq:two-sided-modulus-multi-d}
-\ell(|x-y|) \leq \frac{(b(x,t)-b(y,t))\cdot(x-y)}{|x-y|} \leq \omega(|x-y|) \quad\forall\,x\neq y
\end{equation}
where $\ell$ is given by \eqref{eq:log-lipschitz-modulus}, and where $\omega$ is \emph{any} Osgood modulus satisfying Assumption~\ref{ass:b-conditions}~\ref{ass:oshc}.
The lower bound in \eqref{eq:two-sided-modulus-multi-d} bounds the rate at which the backwards flow can expand; it guarantees the existence, uniqueness and regularity of the backwards flow appearing in \eqref{eq:canonical-solution-multi-d}.
The upper bound in \eqref{eq:two-sided-modulus-multi-d} bounds the rate of contraction of the backwards flow; it guarantees the uniqueness of weak solutions via the same non-linear $L^1$ estimate that we used in Section~\ref{sec:forwards-uniqueness}.
\item
The right-hand side of \eqref{eq:log-lipschitz-condition} can be replaced by $C\ell(|x-y|)$ for any $C>0$ by performing the change of variables $t \mapsto t/C$. For the sake of simplicity we do not consider right-hand sides of the form $\lambda(t)\ell(|x-y|)$ for some $\lambda\in L^1_\loc([0,\infty))$, but this can be handled similarly to the one-dimensional case.

\item\label{rem:multi-d-b-continuous}
Bonicatto and Gusev~\cite[Proposition~5.1]{bonicatto_non-uniqueness_2019} observed that a two-sided Osgood condition like \eqref{eq:log-lipschitz-condition} ensures that $b$ is continuous (although not necessarily log-Lipschitz continuous). Note, however, that \eqref{eq:log-lipschitz-condition} ensures that $b_i$ is log-Lipschitz continuous with respect to $x_i$, and therefore also H\"older continuous in $x_i$ of any exponent $\alpha\in(0,1)$.

\item
The second integral in \eqref{eq:weak-solution-multi-d} is well-defined due to Lemma~\ref{lem:iterated-integral}, and is bounded due to Theorem~\ref{thm:young-integral-properties}~\ref{thm:young-integral-properties_size-control} and the fact that $u$ is locally H\"older continuous and $b_i$ is H\"older continuous with respect to $x_i$.
\end{enumerate}
\end{remark}

A variant of the following is given in \cite[Theorem~3.7]{bahouri_fourier_2011}
\begin{lemma}\label{lem:ode-well-posed-multi-d}
    Under Assumption \ref{ass:b-conditions-multi-d}, there exists a unique flow $X=X_t(x,s)$ for $b$, defined for all $s,t\in\R$ and $x\in\R^d$. The map $x\mapsto X_t(\cdot,s)$ is bijective for all $s,t\in[0,\infty)$ and is locally H\"older continuous; more precisely, $|X_t(x,s)-X_t(y,s)| \leq \Psi(|x-y|,|t-s|)$ for all $x,y,t,s$, where
    \begin{equation}\label{eq:loglip-cont-modulus}
        \Psi(z,\tau) \coloneqq \begin{cases}
            z^{e^{-t}} & \text{if } z\leq e^{-e^t} \\
            \tfrac1e\bigl(1+t-\log(-\log z)\bigr) & \text{if }  e^{-e^t} < z \leq e^{-1} \\
            z+t/e & \text{if } e^{-1}<z.
        \end{cases}
    \end{equation}
\end{lemma}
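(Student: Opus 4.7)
The plan is to adapt the one-dimensional argument of Theorem~\ref{thm:ode-well-posed} to the multi-dimensional setting. Existence of Carath\'eodory solutions is immediate from Peano's theorem --- Remark~\ref{rem: onRegularityAssumptionsInMultiD}\ref{rem:multi-d-b-continuous} ensures that $b(\cdot,t)$ is continuous --- combined with global boundedness of $b$. For uniqueness, if $X$ and $Y$ are two solutions through $(x,s)$, the identity $\tfrac{1}{2}\tfrac{d}{dt}|X_t-Y_t|^2 = (X_t-Y_t)\cdot(b(X_t,t)-b(Y_t,t))$ combined with \eqref{eq:log-lipschitz-condition} yields $\bigl||X_t-Y_t|'\bigr|\leq \ell(|X_t-Y_t|)$ a.e., so Lemma~\ref{lem:osgood} (applied with $y_0=0$) forces $X=Y$. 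Bijectivity of $X_t(\cdot,s)$ then follows on noting that $\ell$ is invariant under time-reversal: the same theory applied to $\tilde{b}(x,\tau)\coloneqq -b(x,s+t-\tau)$ yields a backward flow which inverts the forward one.

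The substantive content is the modulus $\Psi$. By the same Osgood-type calculation as for uniqueness, the (absolutely continuous, non-negative) function $z(\tau)\coloneqq|X_\tau(x,s)-X_\tau(y,s)|$ satisfies the integral inequality
\[
z(\tau) \leq |x-y| + \int_s^\tau \ell(z(\sigma))\, d\sigma \qquad \forall\, \tau\geq s,
\]
so after the shift $\tau\mapsto \tau-s$, Lemma~\ref{lem:osgood} gives $z(s+t)\leq G_{|x-y|}^{-1}(t)$, where $G_{y_0}(y)\coloneqq\int_{y_0}^y dr/\ell(r)$. It thus suffices to compute $G_z^{-1}(t)$ explicitly for the piecewise modulus \eqref{eq:log-lipschitz-modulus} and verify that this recovers the three cases of $\Psi(z,t)$.

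For $z\leq y\leq e^{-1}$ the substitution $u=-\log r$ gives $G_z(y)=\log(-\log z)-\log(-\log y)$, so inverting yields $G_z^{-1}(t)=z^{e^{-t}}$; this stays $\leq e^{-1}$ (i.e., remains in the regime where $\ell(r)=-r\log r$) precisely when $z\leq e^{-e^t}$, recovering case one. When $e^{-e^t}<z\leq e^{-1}<y$, the defining integral splits at $e^{-1}$ to give $G_z(y)=\log(-\log z)+e(y-e^{-1})$, hence $G_z^{-1}(t)=\tfrac{1}{e}\bigl(1+t-\log(-\log z)\bigr)$, matching case two. For $e^{-1}<z\leq y$ we simply have $G_z(y)=e(y-z)$, so $G_z^{-1}(t)=z+t/e$, case three.

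The only conceptual step is the reduction to the scalar comparison ODE, which is streamlined by a direct appeal to Lemma~\ref{lem:osgood}; the main (mild) point to watch is that the piecewise definition of $\ell$ forces one to split the defining integral of $G_z$ at $r=e^{-1}$, but this is unambiguous because $\ell$ is continuous (and non-decreasing) at the join. Once this is handled, the three explicit formulas above literally coincide with the three cases of $\Psi$, completing the proof.
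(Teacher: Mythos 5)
Your proposal is correct and follows exactly the route the paper indicates: existence from Peano (via the observation that the two-sided log-Lipschitz bound forces spatial continuity of $b$), uniqueness from the Osgood inequality applied to $|X_t-Y_t|$, bijectivity from time-reversal symmetry of the modulus $\ell$, and the explicit $\Psi$ by inverting $G_{y_0}$ piecewise at the join $r=e^{-1}$. The paper's own proof merely asserts the last step as ``an elementary but tedious computation,'' whereas you carry it out; I checked the three inversions of $G_z$ and the corresponding validity ranges $z\leq e^{-e^t}$, $e^{-e^t}<z\leq e^{-1}$, $e^{-1}<z$, and they are all correct.
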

\begin{proof}
The fact that the forwards-in-time flow exist follows from spatial continuity of $b$ (cf.~Remark~\ref{rem: onRegularityAssumptionsInMultiD}~\ref{rem:multi-d-b-continuous}), and uniqueness follows from the same argument as in Theorem \ref{thm:ode-well-posed}. Since \eqref{eq:log-lipschitz-condition} is a two-sided bound, the same argument can be applied backwards in time, showing that the flow is well-defined both forwards and backwards in time, and is bijective in space. The specific form \eqref{eq:loglip-cont-modulus} of the continuity modulus follows from an elementary but tedious computation of the modulus $\Psi$ in Lemma~\ref{lem:osgood}.
\end{proof}

We are now ready to prove the main theorem.
\begin{proof}[{Proof of Theorem~\ref{thm:uniquenessx-and-stability-multi-d}}]
By Lemma~\ref{lem:ode-well-posed-multi-d}, the flow $X$ is locally H\"older continuous, so $u$ is also locally H\"older continuous. The proof that $u$ solves \eqref{eq:transport-eq-multi-d} is similar to that of Theorem~\ref{thm:existence-of-solution}, but is much simpler because $X$ and $u$ are continuous.

The proof that $u$ is renormalizable, i.e.~that $\eta\circ u$ is a solution for all Lipschitz $\eta\from\R\to\R$, is very similar to that of Theorem \ref{thm:renormalizability}, but is simpler because the variation bounds on $u$ and $b$ are replaced by H\"older and log-Lipschitz continuity. In particular, the analogue of the term $A_\eps$ in the proof of Theorem \ref{thm:renormalizability} is estimated as follows:
\begin{align*}
|A_\eps| &= \Biggl|\int_0^\infty\int_{\R^d}\int_{\R^d}
\begin{aligned}[t]\bigl(b(x,t)-b(y,t)\bigr)\cdot \nabla_x\bigl(\eps^{-d}\rho(\eps^{-1}|x-y|)\bigr)& \\
{}\times \bigl(u(y,t)-u(x,t)\bigr)\eta_\eps'(x,t)\phi(x,t)&\,dy\,dx\,dt\Biggr|
\end{aligned} \\
\intertext{(\textit{as $u$ is locally $\alpha$-H\"older for some $\alpha>0$})}
&\lesssim \eps^{-d-1}\int_0^\infty\int_{\R^d}\int_{\R^d}
\begin{aligned}[t]\biggl|\bigl(b(x,t)-b(y,t)\bigr)\cdot \frac{x-y}{|x-y|}\rho'(\eps^{-1}|x-y|)\biggr|& \\
{}\times |x-y|^\alpha|\phi(x,t)|&\,dy\,dx\,dt
\end{aligned} \\
\intertext{(\textit{using \eqref{eq:log-lipschitz-condition}})}
&\lesssim \eps^{-d-1}\iint_{\supp\phi}\int_{B_\eps(x)}
\ell(|x-y|)|x-y|^\alpha\,dy\,dx\,dt \\
&\lesssim \eps^{-1}\int_0^\eps \abs{\log h} h^\alpha\,dh \to 0
\end{align*}
as $\eps\to0$.

To prove $L^1$ stability and uniqueness, let $u$ be a weak solution of \eqref{eq:transport-eq-multi-d} and fix some $\phi\in C_c^\infty(\R^d\times[0,T])$. Let further $L>0$ be such that $\supp(\phi)\subset[-\frac{L}{2},\frac{L}{2}]^d\times[0,T]$. By assumption, there is some $\alpha\in(0,1]$ such that $u(\cdot, t)$ is $\alpha$-H\"older continuous (uniformly in $t$) on $[-\frac{L}{2},\frac{L}{2}]^d$; in particular, letting $p\coloneqq 1/\alpha$, the function $u(\cdot,t)$ has uniformly bounded $p$-variation in each spatial direction over $[-\frac{L}{2},\frac{L}{2}]^d$. We follow the proof of Theorem \ref{thm:uniqueness-and-stability} to get,
\[
\int_{\R^d} |u|\phi(x,T)\,dx \leq \int_{\R^d}|u_0|\phi(x,0)\,dx + \sum_{i=1}^d
\int_0^T \int_{\R^{d-1}} \widetilde\omega\big(\|u\phi(\hat{x}_i,t)\|_{L^1(\R)}\big)\,d\hat{x}_i\,dt
\]
where
\[
\widetilde\omega(h) \coloneqq \frac{k^{2p-1}}{h^{2p-2}}\ell\biggl(\frac{h^{2p-1}}{k^{2p-1}}\biggr) + c_p h, \qquad k\coloneqq \max_{i=1,\dots,d}\esssup_{\substack{t\in(0,T] \\ \hat{x}_i\in\R^{d-1}}} \bigl||u|\phi(\hat{x}_i,t)\bigr|_{V^p(\R)}L^{1-\frac1p}.
\]
Inserting the expression for $\ell$, it is a straightforwards calculation to show that
\[
\widetilde\omega(h) \leq \widehat\omega(h) \coloneqq (2p-1)k\ell\Bigl(\frac{h}{k}\Bigr) + c_ph.
\]
Note that $\widehat\omega$ (unlike $\widetilde\omega$) is concave.
Hence,
\begin{align*}
&\int_{\R^d} |u|\phi(x,T)\,dx -\int_{\R^d}|u_0|\phi(x,0)\,dx
\leq \int_0^T\sum_{i=1}^d \int_{\R^{d-1}} \widehat\omega\big(\|u\phi(\hat{x}_i,t)\|_{L^1(\R)}\big)\,d\hat{x}_i\,dt \\
&\qquad\leq dL^{d-1}\int_0^T \widehat\omega\Biggl(\frac{1}{dL^{d-1}}\sum_{i=1}^d \int_{\R^{d-1}}\|u\phi(\hat{x}_i,t)\|_{L^1(\R)}\,d\hat{x}_i\Biggr) \,dt\\
&\qquad = dL^{d-1}\int_0^T\widehat\omega\biggl(\frac{1}{L^{d-1}}\|u\phi(\cdot,t)\|_{L^1(\R^d)}\biggr)\,dt.
\end{align*}
Applying Lemma \ref{lem:osgood} as in Theorem \ref{thm:uniqueness-and-stability}, we get $L^1$ stability and uniqueness.
\end{proof}

\section{Vanishing viscosity}\label{sec:vanishing-viscosity}
\newcommand{\E}{\mathbb{E}}
\renewcommand{\P}{\mathbb{P}}

In this section we show that the weak solutions considered in this paper are stable with respect to viscous, parabolic perturbations of the equation. For the sake of simplicity we will only consider the backwards problem. We aim to show that the viscous approximation converges to the ``canonical'' solution $u = u_T\circ X_T$.

Assume that $u_T\in C^0(\R^d)$ and that $b\in C_b^0(\R^d\times[0,T], \R^d)$ satisfies the one-sided Osgood condition
\[
(b(x,t)-b(y,t))\cdot(x-y)\leq|x-y|\omega(|x-y|) \qquad \forall x,y,t
\]
for some Osgood modulus $\omega$.
For some $\eps>0$ consider the viscous backwards problem
\begin{equation}\label{eq:viscous-transport}
\begin{cases}
\partial_t u^\eps + b\cdot \nabla u^\eps + \eps\Delta u^\eps = 0 & \text{for } t\in (0,T),\ x\in\R^d \\
u^\eps(T) = u_T.
\end{cases}
\end{equation}
Consider the associated SDE
\begin{equation}\label{eq:sde}
\begin{cases}
dX_t^\eps = b(X_t^\eps,t)\,dt + \sqrt{2\eps} \,dW_t & \text{for } t>s, \\
X_s^\eps(x,s)=x
\end{cases}
\end{equation}
where $W$ is a Wiener process on some probability space $(\Omega, \mathcal{F},\P)$. Since $b\in L^\infty$, this SDE has a unique strong solution (i.e., a stochastic process adapted to $W$ which satisfies the SDE in the integral sense); see e.g.~\cite{veretennikov_1981,Shap16}. For $x,y\in\R$, the stochastic process $t\mapsto X_t(x,s)-X_t(y,s)$ satisfies the deterministic equation
\[
d\bigl(X_t^\eps(x,s)-X_t^\eps(y,s)\bigr) = \bigl(b(X_t^\eps(x,s),t)-b(X_t^\eps(y,s),t)\bigr)\,dt,
\]
so following the argument of Theorem \ref{thm:ode-well-posed} yields
\[
|X_t^\eps(x,s)-X_t^\eps(y,s)| \leq \Psi\bigl(|x-y|,t-s\bigr) \qquad \text{(for $t\geq s$), almost surely}
\]
for all $x,y\in\R$, for some deterministic, continuous $\Psi\from\R_+\times\R_+\to\R_+$ satisfying $\Psi(z,0)=z$ and $\Psi(0,\tau)=0$ for all $z,\tau$.
Moreover, from \eqref{eq:sde} and the fact that $W$ is $\alpha$-H\"older continuous for any $\alpha<\nicefrac12$, $\P$-almost~surely, it follows that for any $\eta>0$, there is some $\Omega_\eta\subset\Omega$ with $\P(\Omega_\eta)\geq 1-\eta$ such that the $\alpha$-H\"older seminorm of
$t\mapsto X_t^\eps(x,s,\omega)$ is uniformly bounded both with respect to $\omega\in\Omega_\eta$ and $(x,s)\in\R\times[0,T]$.
Finally, estimating differences in $s$ in terms of differences in $t$ and $x$ shows a similar kind of equicontinuity of $s\mapsto X_t^\eps(x,s)$.
Applying Prokhorov's theorem then yields a subsequence $(X^{\eps_n})_{n\in\N}$ converging to some process $X=X_t(x,s,\omega)$, uniformly on compacts in $(x,t,s)$, and in distribution in $\omega$.
Taking limits in \eqref{eq:sde} and using an argument as in e.g.~\cite{BOQ09}, we find that the limit $X$ is the unique solution to
\[
\dot{X}_t(x,s)=b(X_t(x,s),t) \quad \text{(for $t>s$)}, \qquad X_s(x,s)=x.
\]
In particular, the entire sequence $(X^\eps)_{\eps>0}$ converges to $X$.

By a standard argument using the Feynman--Kac formula,
the function $u^\eps(x,t)\coloneqq \E[u_T(X_T^\eps(x,t))]$ is the unique bounded, classical solution of the backwards equation \eqref{eq:viscous-transport}.  Since $u_T$ is assumed to be continuous, the convergence $X^\eps\to X$ ensures that $u^\eps \to u_T\circ X_t$ pointwise.



\section{Inhomogeneous equations}\label{sec:inhomogeneous}
With the theory developed so far, it is straightforwards to treat inhomogeneous problems. We claim that the usual Duhamel formula provides the unique solution to the problem.

For the sake of concreteness we only consider the one-dimensional forwards problem, but the backwards problem and the multi-dimensional problem (from Section~\ref{sec:backwards1d} and \ref{sec:multi-d}, respectively) can be handled similarly. Thus, we consider the inhomogeneous equation
\begin{equation}\label{eq:forwards-inhomo-1d}
\begin{cases}
\partial_t u + b\partial_x u = h \\
u(0) \equiv 0
\end{cases}
\end{equation}
for some $h=h(x,t)$, and where $b$ satisfies Assumption~\ref{ass:b-conditions}.
(We have already treated the Cauchy problem, so we can assume without loss of generality that $u(0)\equiv0$.)

Let $h\from\R\times\R_+\to\R$ be bounded, measurable, and satisfy $\sup_{t\in[0,T]}|h(t)|_{V^p(\R)} <\infty$ for every $T>0$ (so Assumption \ref{ass:uzero-conditions} is enforced for $x\mapsto h(x,t)$, uniformly in $t$).
Let $X=X_t(x,s)$ for $0\leq s\leq t$ and $x\in\R$ be the forwards flow of $b$, and denote the backwards flow by $X_s(x,t) \coloneqq (X_t(\cdot,s))^{-1}(x)$, which is well-defined for a.e.~$x\in\R$, for every $s\leq t$. Then the function $u_s(x,t)\coloneqq h(X_s(x,t),s)$ is well-defined at almost every $x$, for all $s\leq t$, and by Theorem \ref{thm:forwards-problem-well-posed-1d}, $u_s$ solves
\[
\partial_t u_s + b\partial_x u_s = 0 \quad (x\in\R,\ t>s), \qquad u_s(x,s)=h(x,s) \quad (x\in\R)
\]
(in the weak sense). Define $u$ by the Duhamel formula
\begin{equation}\label{eq:duhamel}
    u(x,t) \coloneqq \int_0^t u_s(x,t)\,ds = \int_0^t h(X_s(x,t),s)\,ds.
\end{equation}
Then $u$ is well-defined at almost every $x\in\R$ for all $t\geq0$. By an approximation argument very similar to that in the proof of Theorem~\ref{thm:existence-of-solution}, we find that \eqref{eq:duhamel} is indeed a weak solution of \eqref{eq:forwards-inhomo-1d}.


\section*{Acknowledgements}
We gratefully acknowledge the support of the Research Council of Norway through the project \textit{INICE} (301538). The second author is supported by the French Agence Nationale de la Recherche (ANR) under grant number ANR-23-CE40-0015 (ISAAC).
The first author would like to thank Magnus~Ch.~Ørke for helpful suggestions and feedback for Section~\ref{sec:vanishing-viscosity}.

\printbibliography{}
\end{document}